\definecolor{labelkey}{rgb}{0,0,1}
\tikzstyle{startstop} = [rectangle, rounded corners, minimum width=0cm, minimum height=1cm,text centered, draw=black, fill=red!30]
\tikzstyle{process} = [rectangle, minimum width=0cm, minimum height=0cm, text centered, draw=black]
\tikzstyle{split} = [rectangle]
\tikzstyle{arrow} = [thick,->] 
\tikzstyle{line} = [thick]
\newtheorem{theorem}{Theorem}[section]
\newtheorem{lemma}[theorem]{Lemma}
\newtheorem{proposition}{Proposition}[section]
\newtheorem{corollary}{Corollary}[section]
\theoremstyle{definition}
\theoremstyle{remark}
\newtheorem{remark}[theorem]{Remark}
\newcommand{\abs}[1]{\lvert#1\rvert}
\newcommand{\p}{\ensuremath{\partial}}
\newcommand\be{\begin{equation}}
\newcommand\ee{\end{equation}}
\newcommand\bea{\begin{eqnarray}}
\newcommand\eea{\end{eqnarray}}
\newcommand\bi{\begin{itemize}}
\newcommand\ei{\end{itemize}}
\newcommand\ben{\begin{enumerate}}
\newcommand\bena{\begin{enumerate}[(a)]}
\newcommand\een{\end{enumerate}}
\newcommand\bp{\begin{proof}}
\newcommand\ep{\end{proof}}
\title{Linear decay of the $\beta$-plane equation near Couette flow on the plane}
\author{Jacob Bedrossian\footnote{Department of Mathematics, University of California, Los Angeles, CA 90095, USA \texttt{jacob@math.ucla.edu}} \qquad Patrick Flynn\footnote{Department of Mathematics, University of California, Los Angeles,  CA 90095, USA, \texttt{pflynn@math.ucla.edu}} \qquad Sameer Iyer\footnote{Department of Mathematics, University of California, Davis, CA 95616, USA
\texttt{sameer@math.ucdavis.edu}}}
\begin{document}
\maketitle
\begin{abstract}
We prove new time decay estimates for the linearized $\beta$-plane equation near the Couette flow on the plane that combine inviscid damping and the dispersion of Rossby waves. Specifically, we show that the profiles of the velocity field components (i.e. $u(t,x+ty,y)$) decay pointwise on any compact set with polynomial rates. While mixing dominates for streamwise frequencies that are $O(1)$, dispersive effects need to be extracted for low streamwise frequencies that appear along a critical ray in frequency space. Our proof entails the analysis of oscillatory integrals with homogeneous phase and multipliers that diverge in the infinite time limit. To handle this singular limit, we prove a Van der Corput type estimate, followed by two delicate asymptotic analyses of the phase and multipliers: one that is of ``boundary layer" type, featuring sharp gradients that grow in $t$ across the critical ray, and one that is of ``multi-scale" type, which extracts a governing analytic profile function for the phase. 
\end{abstract}

\setcounter{tocdepth}{2}
{\small\tableofcontents}

\section{Introduction}

In this paper, we study the 2D Euler equations with the $\beta$-plane effect, linearized around the Couette flow $(y,0)^T$ on $\mathbb R^2$.
The Euler equations with the $\beta$-plane effect is a canonical model in atmosphere and ocean sciences \cite{vallis2017atmospheric,pedlosky2013geophysical}, where it is sometimes called the \emph{barotropic vorticity equations}, governing barotropic dynamics in certain regimes such as Rossby waves for small data and some aspects of geostrophic turbulence for larger data \cite{rhines1979geostrophic,rhines1975waves}.    
Here, we are interested in understanding the interplay between the Rossby waves driven by the $\beta$-plane effect and the filamentation (inviscid damping) due to the Couette flow. 
It is known that Rossby waves will be absorbed by the shear flow; in geophysical fluid dynamics this is sometimes called Booker-Bretherton absorption \cite{booker1967critical}. The goal of this paper is to make a more quantitative study of this effect, endeavouring to obtain decay estimates that combine the inviscid damping and dispersive decay at low frequencies.

Denoting the (perturbation) vorticity $\omega(t,x,y)$, the linearized equations become
\begin{align}
&\p_t \omega + y \p_x \omega - {\p_x}\Delta_{x,y}^{-1} \omega = 0, 
\label{eq:2deuler}\\
&\omega(0,x,y) = \omega_0(x,y).
\end{align}
In the above,  we have re-scaled $t$, $x$, and $y$ to eliminate the parameters of the shear strength and $\beta$ itself. 
Formally, when the $x$-wavelength is small, the dominant term is the $\beta$-plane effect, which limits to a dispersive equation 
\begin{align*}
\partial_t q = {\p_x}\Delta_{x,y}^{-1} q.  
\end{align*}
The traveling waves of the above PDE are called \emph{Rossby waves}. Using $\xi$ and $\eta$ for the wavenumbers in $x$ and $y$ respectively, these traveling waves have the dispersion relation
\begin{align*}
  \Omega(\xi,\eta) = \frac{\xi}{\xi^2 + \eta^2}.
\end{align*}
These waves play an important role in a variety of geophysical fluid dynamics settings \cite{rhines1975waves,chelton1996global,pedlosky2013geophysical,vallis2017atmospheric}. 

On the other hand, if we eliminate the $\beta$-plane term, we are left with pure Couette flow 
\begin{align}
  & \p_t \omega + y \p_x \omega  = 0 \label{eq:CouetteNoBeta} \\ 
  & \omega(0,x,y) = \omega_0(x,y). 
\end{align}
By now it is well-known that this equation is dominated by inviscid damping; see e.g. \cite{BM15,bedrossian2019stability} for detailed discussions.
Inviscid damping of shear flows (and vortices) has recently been intensively studied in both linearized settings (see e.g. \cite{wei2020linearBeta,bedrossian2019vortex,wei2020linear,wei2018linear,jia2020linear,wei2019linear,yang2018linear,jia2025linearized}) and, to a lesser extent, nonlinear settings \cite{BM15,bedrossian2024uniform,bedrossian2023nonlinear,masmoudi2024nonlinear,ionescu2023nonlinear,zhao2025inviscid,deng2023long,lin2011inviscid,fan2025inertial}.   

We introduce the adapted coordinates 
\begin{align}
z = x- ty, \qquad f(t, z, y) = \omega(t, x, y), 
\end{align}  
which reduces \eqref{eq:2deuler} to
\begin{align}
\partial_t f - \frac{\p_z}{\p_z^2 + (\p_y - t \p_z)^2} f = 0. 
\end{align}
The variable $f$ is called the \emph{vorticity profile}. 
Taking the Fourier transform of $f$, defined as follows,
\[
\hat f(t,\xi,\eta) = \iint_{\mathbb R^2} e^{-i(\xi z + \eta y)} f(t,z,y) dz dy,
\]
we get the following equation for $\hat f$:
\begin{align}
\partial_t \widehat{f} + \frac{i \xi}{\xi^2 + (\eta - t \xi)^2} \widehat{f} = 0.  
\end{align}
Therefore, we have the closed form solution for $\hat f$:
\[
\hat f(t,\xi,\eta) = e^{i \Phi_t(\xi,\eta)} \hat \omega_0(\xi,\eta), 
\]
where 
\begin{align}
\Phi_t(\xi,\eta) = \frac{1}{\xi}\left( \arctan(t - \frac{\eta}{\xi}) + \arctan(\frac{\eta}{\xi})\right),
\end{align} 
yielding the following formula for the vorticity profile
\begin{align}\label{eq:vorticity_profile}
f(t,z,y)= \frac{1}{(2\pi)^2} \iint_{\mathbb R^2} \widehat{\omega}_0(\xi,\eta) e^{i (\xi z + \eta y +\Phi_t(\xi,\eta) )  }d\xi d\eta,
\end{align}
This integral then allows us to recover the \textit{stream function}  $\psi(t,x,y) = -\Delta_{x,y}^{-1} \omega$.
On the other hand, as in \cite{BM15}, to show time decay of $u$, it is more natural to study the profile of the stream function,
\begin{align}
\phi(t,z,y)= \frac{1}{(2\pi)^3} \iint_{\mathbb R^2} \widehat{\omega}_0(\xi,\eta) \frac{ e^{i (\xi z + \eta y +\Phi_t(\xi,\eta) )  }}{\xi^2 + (\eta - t\xi)^2} d\xi d\eta,
\end{align}
or, rather, its gradient:
\begin{align}
\phi_z (t,z,y) &= \frac{1}{(2\pi)^3} \iint_{\mathbb R^2}  {e^{i (\xi z + \eta y +\Phi_t(\xi,\eta) )}}\frac{i\xi}{\xi^2 + (\eta -t\xi)^2}\widehat{\omega}_0(\xi,\eta) d\xi d\eta, \label{eq:phi_z}\\
\phi_y (t,z,y) &= \frac{1}{(2\pi)^3} \iint_{\mathbb R^2}  {e^{i (\xi z + \eta y +\Phi_t(\xi,\eta) )  }}\frac{i\eta}{\xi^2 + (\eta -t\xi)^2}\widehat{\omega}_0(\xi,\eta) d\xi d\eta. \label{eq:phi_y}
\end{align}
These profiles allow us to recover the velocity field, since by the Biot-Savart law,
\[
u^x(t,x,y)= \phi_y(t,x+ty,y) -t\phi_z(t,x+ty,y), \quad \quad  u^y = \phi_z(t,x+ty,y).
\]
Thus, to show decay of $u$, it suffices to show decay of $\nabla_{z,y} \phi$. 

We can see from the integrals \eqref{eq:phi_z} and \eqref{eq:phi_y} that any plane wave initial data $\omega_0(x,y) = e^{i(x\xi + y\eta)}$ with $\xi\neq 0$ eventually mixes under the Couette shear flow. This is the inviscid damping of the Rossby waves. 
In fact, we can see that the inclusion of the $\beta$-plane term does not affect the arguments for deducing inviscid damping in \eqref{eq:CouetteNoBeta} and hence we have the streamfunction profile frequency-by-frequency damping, 
\begin{align*}
\abs{\hat{\phi}(\xi,\eta)} \lesssim \frac{1}{t^2} \left (\frac{1}{\xi^2} + \frac{\eta^2}{\xi^4}\right)|\hat \omega_0(\xi,\eta)|,
\end{align*}
which also gives similar decay estimates on $\nabla_{z,y} \phi$.
This inviscid damping provides strong decay for wavenumbers satisfying ${|\xi|}\geq 1 + |\eta|$. However, this effect is very weak at small $\xi$.
On the other hand, these low wavenumbers are exactly where we expect the dispersion to be most dominant. 
Our main theorem provides estimates which greatly improve on the raw inviscid damping estimates (at low frequencies and on compact sets) by harnessing the transient dispersive effects provided by the $\beta$-plane effect.

 Before we state our result, it is convenient to define some notation. We use the notational conventions
$\langle p \rangle^2 := \sqrt{p^2 + 1} $, and write $A\lesssim B$ to mean $A \leq CB$ for some constant $C > 0$. We will also write $A \sim B$ to mean $B\lesssim A \lesssim B$. 
Our main result is as follows. 
\begin{theorem}\label{thm:decay}
Let $\rho = \sqrt{z^2 + y^2}$. Then, for all $t > 0$, we have
\begin{align}
|\phi_z(z,y)|& \lesssim (  \|\langle \xi,\eta\rangle^{5} \hat\omega_0\|_{L^\infty}  +   \|\langle \xi,\eta\rangle^{6} \nabla_{\xi,\eta}\hat\omega_0\|_{L^\infty})\langle \rho\rangle \log(\rho + \rho^{-1} + 1)  \frac{\ln(t+ 2)}{\langle t\rangle^\frac{3}{2}}, \\
 \quad |\phi_y(z,y)| &\lesssim (  \|\langle \xi,\eta\rangle^{5} \hat \omega_0\|_{L^\infty} +   \|\langle \xi,\eta\rangle^{6} \nabla_{\xi,\eta}\hat\omega_0\|_{L^\infty}) \langle \rho\rangle \log(\rho + \rho^{-1} + 1) \frac{\ln(t+ 2)}{\langle t\rangle}.
\end{align}
\end{theorem}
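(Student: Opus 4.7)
I would treat \eqref{eq:phi_z}, \eqref{eq:phi_y} as oscillatory integrals with phase $\xi z + \eta y + \Phi_t(\xi,\eta)$ and decompose frequency space along the critical ray $\eta = t\xi$. On the region $|\xi|\gtrsim 1$, the mixing multiplier $\xi/(\xi^2 + (\eta - t\xi)^2)$ enjoys pointwise inviscid damping: for $\hat\omega_0$ concentrated near $|\eta|\lesssim 1$, the denominator is of order $t^2$, and after two integrations by parts in $\eta$ paying the weights $\langle y\rangle^2$, one obtains decay stronger than $\langle t\rangle^{-3/2}$ uniformly in $(z,y)$ on compact sets; the $\langle\xi,\eta\rangle^6$ weight on $\nabla\hat\omega_0$ provides the $\eta$-integrability needed to close this step.

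\textbf{Low frequency: change of variables and outer region.} On $|\xi|\ll 1$, I substitute $\eta = \xi(t+\sigma)$, so that the Jacobian cancels the $\xi$ in the numerator of the multiplier to give a bounded amplitude $i\sign(\xi)/(1+\sigma^2)$, and the phase becomes
\begin{align*}
\Psi_t(\xi,\sigma) = \xi z + \xi(t+\sigma) y + \frac{1}{\xi}\bigl(\arctan(t+\sigma) - \arctan(\sigma)\bigr),
\end{align*}
with the critical ray located at $\sigma = 0$. A smooth cutoff in $\sigma$ isolates the outer region $|\sigma|\gtrsim 1$, where $\partial_\sigma^2 \Psi_t$ is non-degenerate with size $\sim |\xi|^{-1}$ on compact $\sigma$-scales and a standard Van der Corput yields a factor $\sqrt{|\xi|}$ in the $\sigma$-integral. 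The subsequent $\xi$-integration, in which one exploits the large phase parameter $(z+ty)$ sitting in front of $\xi$ via a further Van der Corput or integration by parts, produces the extra $\langle t\rangle^{-1}$ (for $\phi_y$) respectively $\langle t\rangle^{-3/2}$ (for $\phi_z$, thanks to the additional $i\xi$ factor); the $d\xi/|\xi|$ measure on low frequency is where the $\log(\rho+\rho^{-1}+1)$ factor appears, and one $\partial_\xi$-integration by parts then gives the $\langle\rho\rangle$ prefactor.

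\textbf{Main difficulty: boundary layer and multi-scale profile.} The serious obstacle is the boundary layer $|\sigma|\lesssim 1$, $|\xi|\ll 1$, where the phase has several competing scales and the amplitude $1/(1+\sigma^2)$, while bounded, interacts with gradients of $\Psi_t$ that grow in $t$ across $\sigma = 0$. I would perform a multi-scale expansion
\begin{align*}
\arctan(t+\sigma) = \frac{\pi}{2} - \frac{1}{t+\sigma} + O(t^{-3}),
\end{align*}
splitting $\Psi_t = P(\xi,\sigma) + R_t(\xi,\sigma)$, where the \emph{governing profile} $P(\xi,\sigma) = \xi z + \xi(t+\sigma)y + \xi^{-1}(\pi/2 - \arctan\sigma)$ is analytic in $\sigma$ with $\partial_\sigma^2 P\sim \xi^{-1}$, while the boundary-layer correction $R_t = -\xi^{-1}(t+\sigma)^{-1}+\ldots$ is monotone with $\sigma$-gradients of order $1/(\xi t^2)$, concentrated in a thin layer. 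To handle both together I would prove a Van der Corput type lemma that tolerates multipliers which diverge in the asymptotic limit (here as $\xi\to 0$ and $t\to\infty$), provided the phase has a comparably large and monotone second derivative; applied to $P$ this gives the sharp $\sigma$-integral bound, after which the smooth factor $e^{iR_t}$ is absorbed by integration by parts using monotonicity of $R_t$.

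\textbf{Expected bottleneck.} The hardest part is matching the boundary-layer estimate to the outer region smoothly without a log loss, and carefully propagating the $1/\xi$-structure through the final $\xi$-integration so that the weights from $\nabla\hat\omega_0$ compensate exactly the singular multiplier on the critical ray. It is this matching, together with the failure of standard stationary phase to apply uniformly as $\xi\to 0$, that accounts for the $\log(t+2)$ factors in Theorem~\ref{thm:decay}.
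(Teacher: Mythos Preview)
Your approach differs substantially from the paper's. You work in coordinates $(\xi,\sigma)$ with $\sigma=\eta/\xi-t$ and propose an iterated Van der Corput (first in $\sigma$, then in $\xi$); the paper instead exploits the $(-1)$-homogeneity of $\Phi_t$ by passing to polar coordinates $(r,\theta)$, proves a single two-variable Van der Corput-type lemma (Lemma~\ref{lemma:quantitative_analysis}) whose output is governed by three angular constants $\kappa^{(\infty)},\kappa^{(1)},\kappa^{(\$)}$, and then bounds these by a genuinely multiscale expansion: the angular phase is rewritten as $h_t(\theta)=\theta^{-1}H(\theta,\varphi)$ with $\varphi=\theta^2/(\theta-\tfrac{1}{t})$, where $H$ is \emph{analytic} in both arguments. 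This algebraic identity, not an asymptotic one, is what permits uniform control across the distinct boundary layers at $\theta-\tfrac{1}{t}\sim t^{-2}$ and $\theta-\tfrac{1}{t}\sim t^{-3/2}$.

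Your sketch has concrete gaps. First, in the boundary layer $|\sigma|\lesssim 1$ you assert $\partial_\sigma^2 P\sim\xi^{-1}$ for the governing profile, but in fact $\partial_\sigma^2 P=2\xi^{-1}\sigma/(1+\sigma^2)^2$ vanishes at $\sigma=0$, so second-order Van der Corput fails exactly where you invoke it. Second, the inner/outer dichotomy in $\sigma$ misses the actual bottlenecks: the paper shows the $t^{-3/2}$ rate for $\phi_z$ is saturated in $J_{left}$ at $\theta\approx -t^{-1/2}$, which in your variables lies at $\sigma\approx -(t+\sqrt t)$, deep in your outer region; and there is a secondary critical scale at $\theta-\tfrac{1}{t}\sim t^{-3/2}$ (your $\sigma\sim -\sqrt t$) where $h_t'$ vanishes and a separate integrated estimate (Lemma~\ref{lemma:gamma_t_integrated}) is required. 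A single outer Van der Corput cannot resolve either. Two smaller issues: after your substitution the $\phi_y$ amplitude becomes $(t+\sigma)/(1+\sigma^2)\sim t$ near $\sigma=0$, not bounded as you implicitly assume; and your high-frequency claim that the denominator is $\sim t^2$ for $|\xi|\gtrsim 1$ fails along the ray $\eta=t\xi$, which does meet $|\xi|\gtrsim 1$.
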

In the original coordinates, this provides pointwise decay like $\abs{u^x(t,x,y)} \lesssim_R \log (t + 2) \langle t\rangle^{-1/2}$ and $\abs{u^y(t,x,y)} \lesssim_R \log (t+2) \langle t\rangle^{-3/2}$ on sets $(x,y)$ such that $\frac{1}{R} \leq \abs{x - ty} + \abs{y} \leq R$. By comparison, we have the basic inviscid damping estimates, which are straightforward from \eqref{eq:phi_z} and \eqref{eq:phi_y}:
\begin{align}\label{eq:cheap_decay}
 \|(u^x,u^y)\|_{L^\infty} \lesssim \langle t\rangle\|\phi^z\|_{L^\infty} + \|\phi^y\|_{L^\infty} \lesssim \|\hat \omega_0\|_{L^1\cap L^\infty}.
\end{align}

The additional decay from Theorem \ref{thm:decay} due to the low frequency dispersive effects can be compared to electrostatic plasma waves in the Vlasov-Poisson equations on $\mathbb R^3 \times \mathbb R^3$ \cite{bedrossian2022linearized,han2021linearized}. In that setting, one can use the Laplace transform to clearly isolate a (very) weakly damped electrostatic plasma wave with a dispersion relation matching that of warm plasmas (i.e. linearized Euler-Poisson, which there reduces to Klein-Gordon) and a residual part which is rapidly Landau damping. The reduction to Klein-Gordon at low frequencies then yields Schr\"odinger-like dispersive decay estimates. However, this method seems inapplicable here, and indeed, it is far from clear that a similarly distinct separation between the phase mixing and wave-like components of the vorticity exists at all. 
Nevertheless, one still sees an analogy, as in that setting one also has the low frequency decay of the electric field being far faster than that predicted by purely ballistic transport (i.e. the Landau damping present in kinetic free transport $\partial_t h + v \cdot \nabla_x h = 0$). 

On the other hand, much progress has been made in understanding the dispersive effect of rotation in the Euler equations for perturbations of $u=0$ in unbounded domains
\cite{elgindi2017,guo2023global,ko2024global,ko2025increased,pusateri2018global}.  For such problems, the situation is somewhat more tractable, since the linearization is a dispersive equation with constant coefficients. Nevertheless, as in this work, the operators considered in these works are strongly anisotropic, and require subtle decompositions in Fourier space using polar coordinates. This approach is emulated here, particularly in how we handle the various boundary layers near the critical zone. We would also like to highlight the recent work \cite{fan2025inertial}, which shows instability for the nonlinear $\beta$-plane equation on $\mathbb R^3$.

\subsection{Methodology}

Like many dispersive estimates, Theorem \ref{thm:decay} is proved via  a variation of the Van der Corput and stationary phase lemmas (as appearing in, for instance, Chapter 8 of  \cite{stein1993harmonic}). Nevertheless, our argument is far from a straightforward application of these classical results. Observe that 
 both the phase $\Phi_t(\xi,\eta)$ and the  Fourier multipliers $\dfrac{(\xi,\eta)}{\xi^2 + (\eta - t\xi)^2}$ appearing \eqref{eq:phi_z} and \eqref{eq:phi_y} become highly singular in the limit as $t\to\infty$ close to the ray $\left\{\dfrac{\xi}{\eta} \sim t\right\}$ (for the purposes of this informal discussion, we do not define the aperture of this cone). Within this cone, which we shall call the \textit{critical zone}, we must rely on cancellations between the rapidly oscillating phase $\Phi_t$ and the multipliers. In fact, after a suitable re-scaling near the critical zone, $\phi_z$ and $\phi_y$ resemble more standard oscillatory integrals. 
 
 Indeed, \textit{the primary observation in this paper is that as the Fourier multipliers $\dfrac{(\xi,\eta)}{\xi^2 + (\eta - t\xi)^2}$ (that quantify the mixing) degenerate near the critical zone, the oscillations become more pronounced, leading to increased dispersion}. Moreover, we develop techniques in this paper that captures quantitatively this transition between mixing and dispersion. A number of complications emerge that prevent us from using standard dispersive tools to capture this duality between phase mixing and dispersion:  
 \begin{enumerate}
 \item The Hessian of the phase becomes ill conditioned as $t \to \infty$, which leads to a singular inverse map for the stationary points.

\item A number of boundary layers emerge near the critical zone with different scaling limits, so a multi-scale expansion must be performed  to extract the best possible decay estimates.    
\item On the other hand, if one is sufficiently far from the critical zone, the multipliers are better behaved as $t\to \infty$, which allow us to extract decay from the Couette flow without relying on the oscillation of the phase. Thus, we need an estimate that can smoothly interpolate between these two effects.
\end{enumerate}
Although we were unable to find an approach in the literature that was capable of handling all three of these phenomena simultaneously in our particular context, such issues are common in asymptotic analysis, and especially the analysis of oscillatory integrals. Indeed, there is a rich literature for estimating oscillatory integrals in situations with degenerate critical points or limited smoothness. For example, oscillatory integrals with degenerate critical points are known to form \textit{caustics}, leading to slower decay rates. This phenomenon was observed by Arnol'd \cite{arnold2013singularities,arnol1972integrals}, who conjectured how one might compute the optimal decay rate. Varchenko \cite{varchenko1976newton} then gave a general procedure for computing the stationary phase expansion in the degenerate case using the Newton polygon method. Various others have built on Varchenko's method to study oscillatory integrals with degenerate or homogeneous  phases \cite{greenleaf1994fourier,phong1997newton,carbery1999multidimensional}. While this approach is not directly applicable to our setting, his method bears resemblance to the multi-scale expansion carried out here. 
We were also inspired by the works \cite{alazard2017stationary,oh2020uniform} on the stationary phase lemma and \cite{staffilani2002strichartz,tataru1999fbi,tataru2000strichartz} on the Strichartz estimates, both in situations where the phase has finite regularity, as well as the works \cite{schippa2024oscillatory,greenleaf2007oscillatory,yamagishi2020oscillatory} on oscillatory integrals with homogeneous phase functions. 

\subsection{Outline of the proof \& main ingredients}
 
 In this discussion, we outline the main ingredients of our approach. 
 
 \vspace{2 mm}
 
 \noindent \textsc{Polar Coordinates \& $-1$-Homogeneous Phase:}  In Section \ref{sec:polar}, we rewrite the integrals \eqref{eq:phi_z} and \eqref{eq:phi_y} in polar coordinates: 
 \[z + iy =\rho e^{i \alpha}, \quad \quad \xi + i \eta = re^{i (\frac{\pi}{2} - \theta)}.
\] 
Since the critical zone approaches the $\eta$ axis as $t\to \infty$, we use the angle from this axis rather than the $\xi$ axis. In particular, $\theta$ will play the role of a small variable for many of our expansions.

 While this may seem like a trivial first step, it is, in fact, non-obvious when compared to other papers dealing with inviscid damping near shear flows. We find that polar coordinates are natural from the mixing standpoint when set on $\mathbb{R}^2$: the Fourier multipliers that quantify phase mixing (see below, \eqref{eq:mz} -- \eqref{eq:my}) degenerate on the critical ray, $\theta = \frac{1}{t}$. Polar coordinates are also convenient to conduct dispersive analysis with the specific phase that we have here: it is $-1$ homogeneous, and the coordinate system effectively treats $r$ as a parameter in the phase, thereby elucidating exactly the ``cone" where the dispersive effects become dominant.    
 
 To make matters more precise, we now present the stream function profile written in polar coordinates. We let $v(r,\theta) = \hat \omega_0 (\xi,\eta)$ be the initial Fourier data expressed in the new coordinate system. Then, the gradient of the stream function reads
\begin{align}
\partial_z \phi &= \frac{1}{(2\pi)^3} \int_0^{2\pi} \int_0^\infty  e^{i (\rho r \sin(\theta + \alpha) + \frac{h_t(\theta)}{r})} v(r,\theta)  m^z_t(\theta)dr d\theta,  \label{eq:phi_z_polar}\\
\partial_y \phi &= \frac{1}{(2\pi)^3} \int_0^{2\pi} \int_0^\infty  e^{i (\rho r \sin(\theta + \alpha) + \frac{h_t(\theta)}{r})} v(r,\theta)  m^y_t(\theta)dr d\theta. \label{eq:phi_y_polar}
\end{align}
In the above,
\begin{align} \label{defn:ht}
h_t(\theta) &= \frac{\arctan(t-\cot\theta) + \arctan(\cot(\theta))}{\sin(\theta)},\\
m_t^z(\theta) &= \frac{\sin(\theta)}{\sin(\theta)^2 + (\cos(\theta) - t\sin(\theta))^2}, \label{eq:mz}\\
m_t^y(\theta)&= \frac{\cos(\theta)}{\sin(\theta)^2 + (\cos(\theta) - t\sin(\theta))^2}.\label{eq:my}
\end{align}
If $|\theta -\frac{1}{t} | > \frac{1}{\delta t^2}$ with $\delta >0$ sufficiently small, we observe that $m^t_z$ and $m^t_y$ can be bounded from above by 
\[
|m_t^z(\theta)|  \lesssim \frac{|\theta|}{|t\theta - 1|^2}, \quad |m_t^y(\theta)|  \lesssim \frac{1}{|t\theta - 1|^2},
\] 
which in turn can be used to show decay away from the critical zone. In fact, one observes that $\lim_{t\to\infty} tm_t^z = \lim_{t\to\infty} m_t^y = \delta(0)$ in the sense of distributions, leading to the estimate \eqref{eq:cheap_decay}. Thus, to improve this decay rate, we must rely on the oscillation from the phase $h_t/r$.

Using this representation, we proceed to show that for any $(z,y)$, there exists exactly two wavevectors $(\xi,\eta)$ for which the phase $\xi z + \eta y + \Phi_t(\xi,\eta)$ is stationary, which are $\pi$ rotations of each other. This involves exploiting the $-1$ homogeneity of $\Phi_t$, and reducing the problem to a study of the function $h_t(\theta)$. In particular, by defining the complex valued curve $\gamma_t(\theta) = - e^{-i\theta} h_t(\theta)$, we observe that for any $(\rho,\alpha)$, the stationary point is given by the solution to
\[
\rho e^{i\alpha} = \frac{\gamma'_t(\theta)}{r^2}.
\]
Thus, the problem of defining the map $(\rho,\alpha) \mapsto (r,\theta)$ reduces to inverting the map $W_t(\theta) = \mathrm{Arg}(\gamma_t'(\theta))$. Towards this end, we show that $\gamma_t'(\theta)$ is a smooth $\pi$-periodic parametrized curve with negative signed curvature in $\mathbb C\setminus\{0\}$, and has winding number 1 around the origin. This allows us to show that the map $W_t(\theta)$ gives a diffeomorphism from $\mathbb R/(\pi\mathbb Z) \to \mathbb R/(2\pi \mathbb Z)$. The inverse of this map determines the angles of the  two stationary wavevectors from the angle of a given point in real space.

 \vspace{2 mm}
 
 \noindent \textsc{Van der Corput Type Lemma:} In Section \ref{sec:stat_phase}, we prove a generalized quantitative version of the Van der Corput lemma amenable to  the oscillatory integrals \eqref{eq:phi_z} and \eqref{eq:phi_y}. The main  difficulty of estimating \eqref{eq:phi_z} and \eqref{eq:phi_y} is the loss of regularity of the phase $\Phi_t(\xi,\eta)$ in the limit $t\to \infty$. Nevertheless,  in Lemma \ref{lemma:quantitative_analysis}, we prove a stationary phase type estimate that is capable of capturing cancellations between rapid oscillation of the phase $h_t/r$ and the divergence of $m^z_t$ and $m^y_t$ in the critical zone, while also capturing the decay of the multipliers $m^z_t$ and $m^y_t$ away from these points.
 This involves decomposing the integral into dyadic level sets of the gradient of the phase and integrating by parts, and interpolating between gains in the volume of these level sets and $L^\infty$ bounds.
 
Our result gives a general quantitative upper bound on the oscillatory integrals of the form of \eqref{eq:phi_y} and \eqref{eq:phi_z} in terms of functions that depend only on the angle of the wavenumber. Such an estimate may be of independent interest. 

 \vspace{2 mm}
 
 \noindent \textsc{Decomposition into Four Zones for $\theta$:} We plot the phase function $h_t(\theta)$, \eqref{defn:ht},  for some choices of $t$ below:
\begin{figure}[H]\centering
\includegraphics[width=.6\textwidth]{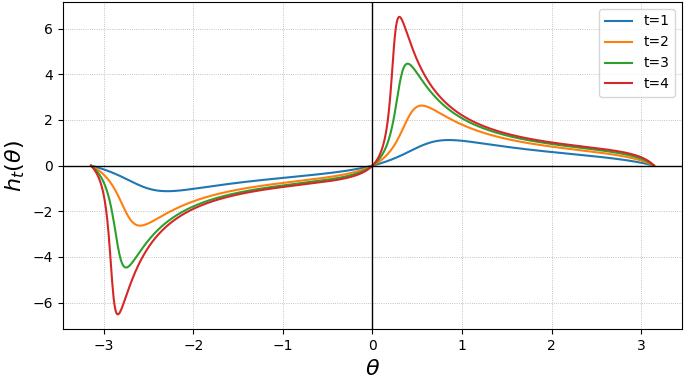}
\caption{\label{fig:ht} The function $h_t$ plotted for $t = 1,2,3,4$.}
\end{figure}
These plots motivate the introduction of following four subintervals of $[-\frac{\pi}{2}, \frac{\pi}{2}]$: 
 \begin{align}
J_{bulk}&= [-\frac{\pi}{2}, -\delta] \cup [\delta, \frac{\pi}{2}],\\
J_{left} &= [-\delta^2, \frac{1}{t} - \frac{1}{\delta t^2}],\\
J_{critical} &= [ \frac{1}{t} - \frac{1}{\delta t^2}, \frac{1}{t} + \frac{1}{\delta t^2}],\\
J_{right} &= [\frac{1}{t} + \frac{1}{\delta t^2}, \delta].
\end{align}
Above, $\delta > 0$ is a small, universal parameter that can be effectively ignored at this point. Qualitatively, one sees that near the critical line $\theta = \frac{1}{t}$ (where the mixing multipliers are degenerating) the phase function $h_t(\theta)$ is concentrating (the amplitude and the derivative are growing in $t$). The region $J_{bulk}$ represents where the mixing multiplier is trivially contributing the desired decay. The treatment of the other three regions requires considerably more care, which we now summarize.  
 
   \vspace{2 mm}
 
 \noindent \textsc{``Boundary Layer" Expansions for $J_{critical}$:} As we have indicated above, our crucial observation is that in the vicinity of the critical ray, $\{\theta = \frac{1}{t}\}$, dispersion dominates. More precisely, near the critical zone where $\theta  \approx \frac{1}{t}$, we re-scale by a factor of $t^2$. Define
\begin{align}\label{eq:sigma_rescaling1} 
\sigma = t^2 (\theta - \frac{1}{t}),
\end{align}
With this change of coordinates, we define
\begin{align}\label{eq:sigma_rescaling2} 
\tilde m^z_t(\sigma) := \frac{m^z_t(\theta)}{t},  \quad \tilde m^y_t(\sigma) := \frac{m^y_t(\theta)}{t^2}, \quad \tilde h_t(\sigma) := \frac{h_t(\theta)}{t}.
\end{align}
We show in Section \ref{sec:critical_rescaling} that these re-scaled functions converge:
\begin{align}
\lim_{t \to\infty} \tilde m^z_t(\sigma) =\lim_{t \to\infty} \tilde m^y_t(\sigma) = \frac{1}{1+\sigma^2}, \quad  \lim_{t\to \infty} \tilde h_t(\sigma) =\frac{\pi}{2} + \arctan(\sigma),
\end{align}
with errors being order $O(\frac{1}{t})$.
Thus, isolating the integral near the critical zone, we expect 
\begin{align}
t\phi_z \approx  \phi_y\approx  \phi'_{approx} := \frac{1}{(2\pi )^3} \int_{\mathbb R} \int_0^\infty \exp\left[i\left(\rho r \sin (\alpha) +t \cdot\frac{\frac{\pi}{2} + \arctan(\sigma)}{r}\right) \right] \frac{v(r,0)}{1+\sigma^2} dr d\sigma.
\end{align}
Analyzing this approximate integral is somewhat easier. Using the non-stationarity of $\dfrac{\frac{\pi}{2} + \arctan(\sigma)}{r}$  in $r$, we can  integrate by parts to get an estimate\footnote{Of course, a sharper estimate could be achieved with the stationary phase argument, but this is acceptable for the purposes of demonstration.}
\begin{align}
|\phi'_{approx}| \lesssim \langle\rho\rangle\frac{\log(t)}{t}.
\end{align}
Thus, under the assumption that the above approximation could be made rigorous, we would expect $|\phi_z| \lesssim_\rho \frac{\log(t)}{t^2}$ and $|\phi_y|\lesssim_\rho \frac{1}{t}$. However, in using the crude $t^2$ rescaling, we are unable to observe the numerous boundary layers that emerge near the critical zone at scales of order $t^{-\alpha}$ with $\alpha \in [1,2)$. In particular, this is why we are only able to show $|\phi_z|  \lesssim_\rho \frac{\log(t)}{t^\frac{3}{2}}$.
 
  \vspace{2 mm}
 
 \noindent \textsc{``Multiscale" Expansions for $J_{left}, J_{right}$:} In the zones $J_{left}, J_{right}$, we identify a different transformation, namely 
\begin{align}
 \varphi(\theta) := \frac{\theta^2}{\theta - \frac{1}{t}}
\end{align}
The function $\varphi(\theta)$ controls the distance of $\theta$ from the critical zone. Indeed, observe that $|\varphi(\theta)| \geq 1$ implies that $|\theta - \frac{1}{t}| \lesssim \frac{1}{t^2}$. Our main motivation behind identifying this nonlinear transformation is that the phase function, $h_t(\theta)$ can be expressed as an analytic function of two variables, $(\theta, \varphi)$: 
\begin{align}
h_t(\theta) = \frac{1}{\theta} H(\theta,\varphi).
\end{align}
For this reason, we regard the phase as ``multi-scale" in this zone: it inherits the original $\theta$ scale, but also the $\varphi$ describes the concentrated dynamics. We subsequently perform a detailed multiscale expansion of all quantities of interest (the phase, the multipliers, etc.) in the $\theta, \varphi$ variables that is then fed into the bounds necessary according to our Van der Corput estimate.

\section{Construction of the stationary map}\label{sec:polar}

In this section, we show that there exists a pair of stationary points $(r,\theta)$ and $(r, \theta+ \pi)$ for any given $(\rho,\alpha)$ for which the exponent of the integrals \eqref{eq:phi_z_polar} and \eqref{eq:phi_y_polar} is stationary. More precisely, the phase function  
\[
\Psi_{t,\rho,\alpha}(r,\theta) =\Psi(r,\theta)  = \rho r \sin(\theta + \alpha) + \frac{h_t(\theta)}{r},
\]
is 2-to-1, and we construct the partial inverses of this map.

Let  us  first observe some basic properties of $\Psi$.
The function $h_t$ is analytic with removable singularities at $\theta \in \pi \mathbb Z$, is $2\pi$-periodic, and satisfies $h_t (\theta + \pi) = -h_t(\theta)$.  We now solve for the stationary points for the phase.
The gradient of this phase is 
\begin{align}
\partial_r \Psi(r,\theta) = \rho\sin(\theta + \alpha) -\frac{h_t(\theta)}{r^2}, \\
 \partial_\theta \Psi(r,\theta) = \rho r\cos(\theta + \alpha) +\frac{h_t'(\theta)}{r}.
\end{align}
Thus, $(r,\theta)$ is a stationary point if and only if
\[
\rho e^{i\alpha} = \frac{ e^{-i\theta}(ih_t(\theta) - h_t'(\theta))}{r^2}.
\]
From this, we see that it is natural to define the complex curve
\begin{align} \label{gamma}
\gamma_t(\theta) := -e^{-i\theta} h_t(\theta)
\end{align}
so that the set of stationary points are given by (see Figure \ref{fig:gamma_prime})
\[
\rho e^{i\alpha} =\frac{ \gamma_t'(\theta)}{r^2}.
\]
\begin{figure}[H]\center
\includegraphics[width=.8\textwidth]{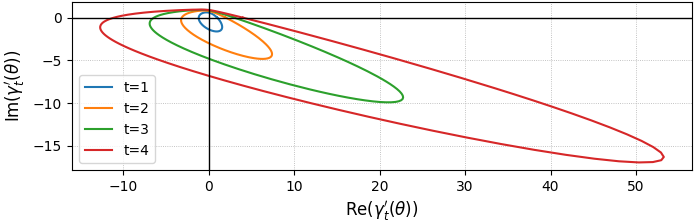}
\caption{\label{fig:gamma_prime}  Above, we plot $\gamma_t'$ in the complex plane for $t =  1,2,3,4$. This curve determines the stationary points of the integrals $f,\phi_z$ and $\phi_y$.} 
\end{figure}

In particular, up to choosing a suitable branch of the logarithm, we have that stationary angles are given by
\begin{align}
\alpha = W_t(\theta) := \mathrm{Arg}(\gamma'(\theta)).\label{eq:W_tdef}
\end{align}
Assuming such a map is well-defined and differentiable, we have
\begin{align} \label{Wpeq}
W_t'(\theta) = \frac{ \mathrm{Im}\left[\overline{\gamma_t'(\theta)} \gamma_t''(\theta)\right]}{|\gamma_t'(\theta)|^2}.
\end{align}
On the other hand, the stationary radii are then determined by
\[
r = \frac{{(h_t(\theta)^2 + h'_t(\theta)^2)^\frac{1}{4}}}{\sqrt{\rho}} = \sqrt{\frac{|\gamma'_t(\theta)|}{\rho}}.
\]
In the following lemma, we clarify the branch cut used to construct $W_t(\theta)$, and show the existence of its inverse.

\begin{proposition}\label{prop:W_structure} The map $\gamma_t : \mathbb R \to \mathbb C\setminus\{0\}$ is smooth, $\pi$-periodic, and satsifies 
\begin{align}
\label{eq:curvature_condition}
\mathrm{Im}\left[\overline{\gamma_t'(\theta)}\gamma_t''(\theta)\right] &<0.
\end{align}
Consequently, the map $W_t$ defined in \eqref{eq:W_tdef} is well-defined as a smooth bijective map from $\mathbb R/(\pi\mathbb Z)$ to $ \mathbb R/(2\pi\mathbb Z)$. 
\end{proposition}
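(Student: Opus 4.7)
The plan is to dispatch the three components of the proposition in order: smoothness and $\pi$-periodicity of $\gamma_t$, the strict curvature inequality \eqref{eq:curvature_condition}, and bijectivity of the induced quotient map $W_t$. The curvature inequality is the main analytic obstacle. Smoothness of $\gamma_t = -e^{-i\theta}h_t$ on all of $\mathbb{R}$ is immediate from smoothness of $h_t$, since the paper already notes that its singularities at $\pi\mathbb{Z}$ are removable. Periodicity reduces to a one-line check: the relations $h_t(\theta+\pi) = -h_t(\theta)$ and $e^{-i(\theta+\pi)} = -e^{-i\theta}$ yield $\gamma_t(\theta+\pi) = \gamma_t(\theta)$.

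For the curvature inequality, direct differentiation gives $\gamma_t' = e^{-i\theta}(ih_t - h_t')$ and $\gamma_t'' = e^{-i\theta}[(h_t - h_t'') + 2ih_t']$, whence $\mathrm{Im}[\overline{\gamma_t'}\gamma_t''] = h_t h_t'' - 2(h_t')^2 - h_t^2$. To prove strict negativity I would exploit the reformulation $h_t(\theta)\sin\theta = \arg w(\theta)$ with $w(\theta) := 1 - t\sin\theta\,e^{-i\theta}$, whose derivative $w'(\theta) = -te^{-2i\theta}$ has constant modulus; a short computation $\mathrm{Im}(w'\bar w) = 1 - |w|^2$ then produces the clean identity $(\arg w)' = |w|^{-2} - 1$. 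Setting $\psi := h_t\sin\theta$, the target inequality becomes equivalent to $\psi[\psi'' + 2\psi'\cot\theta] < 2(\psi')^2$, which, after substituting the explicit expressions for $\psi'$ and $\psi''$ in terms of $|w|^2$ and $(\ln|w|)' = \mathrm{Re}(w'/w)$, reduces to an algebraic inequality in $|w|^2$ and $\theta$ that should yield to careful manipulation. The zeros of $h_t$ at $\theta \in \pi\mathbb{Z}$ must be handled separately via Taylor expansion: from $h_t(\theta) = t\theta + t^2\theta^2 + O(\theta^3)$ near $\theta = 0$, one computes directly that $\mathrm{Im}[\overline{\gamma_t'}\gamma_t''](0) = -2t^2 < 0$, so the inequality extends continuously across the removable singularities.

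With the curvature inequality in place, formula \eqref{Wpeq} gives $W_t' < 0$ everywhere, so $W_t:\mathbb{R}\to\mathbb{R}$ is smooth and strictly decreasing, and in particular $\gamma_t'$ never vanishes. By $\pi$-periodicity of $\gamma_t'$, $W_t(\theta+\pi) - W_t(\theta) = -2\pi n$ for some positive integer $n$ equal to the winding number of $\gamma_t'$ around the origin over one period, and the induced quotient map $\mathbb{R}/(\pi\mathbb{Z}) \to \mathbb{R}/(2\pi\mathbb{Z})$ is a diffeomorphism iff $n = 1$. To pin $n = 1$ I would use that the integer-valued winding number is continuous in $t > 0$ (since $\gamma_t'$ never vanishes), and compute it in the limit $t \to 0^+$: the expansion $\gamma_t(\theta) = \frac{it}{2}(1 - e^{-2i\theta}) + O(t^2)$ yields $\gamma_t'(\theta) = -te^{-2i\theta} + O(t^2)$, which manifestly winds $-1$ times around the origin as $\theta$ traverses $[0,\pi]$. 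The main obstacle remains the global negativity of $h_t h_t'' - 2(h_t')^2 - h_t^2$: despite the considerable simplification afforded by the identity $w'(\theta) = -te^{-2i\theta}$, the inequality encodes a delicate balance between $h_t$ and its derivatives and is unlikely to follow from a textbook convexity or log-concavity estimate alone.
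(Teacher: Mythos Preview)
Your handling of periodicity, smoothness, and the $\theta=0$ Taylor check matches the paper. But the core of the proposition is the curvature inequality, and there you have a genuine gap: you reduce it to $\psi[\psi'' + 2\psi'\cot\theta] < 2(\psi')^2$ with $\psi = h_t\sin\theta$, then say this ``should yield to careful manipulation'' --- without doing the manipulation, and you yourself flag it as ``the main obstacle.'' The paper closes this gap with one more change of variables that you are a single step away from: your $\psi(\theta)$ is exactly $\psi_t(\cot\theta)$ with $\psi_t(\zeta) = \arctan(t-\zeta) + \arctan(\zeta)$, and under $\zeta = \cot\theta$ your inequality collapses (the $\cot\theta$ term cancels) to $\psi_t\psi_t'' < 2(\psi_t')^2$, i.e.\ strict convexity of $1/\psi_t$. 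Equivalently, the paper computes directly $h_t^2 + 2(h_t')^2 - h_t'' h_t = \csc^6\theta\,[2(\psi_t')^2 - \psi_t\psi_t'']$. Proving that last inequality is still nontrivial and is done as a separate lemma: on $[0,t]$ it is free since $\psi_t'' < 0$ there, while for $\zeta > t$ (and by symmetry $\zeta < 0$) the paper substitutes $a = (1+\zeta^2)^{-1/2}$, $b = (1+(t-\zeta)^2)^{-1/2}$, reducing to $(\arcsin b - \arcsin a)(b^3\sqrt{1-b^2} - a^3\sqrt{1-a^2}) < (b^2-a^2)^2$ for $0<a<b<1$, which is then dispatched by the mean value theorem applied to $\arcsin$. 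Your $|w|^2$-based route does not obviously arrive at this reduction, and without some equivalent of it the proof is incomplete.

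Your winding-number argument (continuity in $t>0$, then $t\to 0^+$ giving $\gamma_t' \approx -te^{-2i\theta}$) is a genuine and valid alternative to the paper's approach, which instead notes that $\mathrm{Im}(\gamma_t')$ is a multiple of $\psi_t'(\cot\theta)$, hence vanishes exactly once on $(0,\pi)$, and checks the sign of $\mathrm{Re}(\gamma_t')$ there and at $\theta=0$. Your version is slick but deserves a sentence on why the $O(t^2)$ remainder in $\gamma_t'$ is uniform in $\theta$ across the removable singularity at $\theta=0$; joint smoothness of $\gamma_t$ in $(t,\theta)$ is enough.
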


As a consequence of the above, we can define the stationary radii in terms of $(\rho,\alpha)$ as well
\begin{align}
 \sqrt{\frac{|\gamma'_t(W^{-1}(\alpha))|}{\rho}}=   \frac{{(h_t(W_t^{-1}(\alpha))^2 + h'_t(W^{-1}(\alpha))^2)^\frac{1}{4}}}{\sqrt{\rho}}.
\end{align}
We note that the latter expression is well-defined since $|h_t|$ and $|h_t'|$ are both $\pi$-periodic functions (even though $h_t$ is only $2\pi$-periodic).
Moreover, we have the following potentially useful formulae. First, observe that $W'_t(\theta)$ can also be written
\begin{align} \label{Wpd}
W'_t(\theta) = \frac{h_t(\theta)^2 + 2 h_t'(\theta)^2-h_t''(\theta) h_t(\theta)}{ h_t(\theta)^2 + h_t'(\theta)^2}.
\end{align}
Net, using some elementary trigonometric identities, we have
\begin{align}\label{eq:h_t_alternate_form}
h_t(\theta) = \frac{\frac{\pi}{2} - \arctan\left(\dfrac{\frac{1}{t} - \sin(\theta)\cos(\theta)}{\sin^2(\theta)}\right)}{\sin(\theta)} = -\frac{\mathrm{Im}\left[\log\left(\frac{1}{t}  + \frac{i}{2}(1 - e^{-i2\theta})\right)\right]}{\sin(\theta)}.
\end{align}
The latter identity uses the principal branch of the logarithm.
To see the first formula, we take $\theta \in (\frac{\pi}{2},\frac{\pi}{2})$, and recall the sum of arctangents formula
\[
\arctan(a) + \arctan(b) = \arctan\left( \frac{a+b}{1-ab}\right)
\]
assuming the sum above is in $(-\frac{\pi}{2},\frac{\pi}{2})$. Now, taking $\delta > 0$ sufficiently small, and  $\theta \in (-\frac{\delta}{ t},\frac{\delta}{t})\setminus\{0\}$, we have $0\not \in(\cot(\theta)-t, \cot(\theta))$, so
\[
\arctan(t-\cot(\theta)) + \arctan(\cot(\theta))  = \int_{\cot(\theta)-t}^{\cot(\theta)} \frac{1}{1+p^2} dp \in (-\frac{\pi}{2},\frac{\pi}{2}).
\]
Hence, on $(-\frac{\delta}{ t},\frac{\delta}{ t})$, we have
\begin{align}
\arctan(t-\cot(\theta)) + \arctan(\cot(\theta)) &= \arctan\left(\frac{t}{1- (t-\cot(\theta))\cot(\theta)}\right) \\
&= \arctan\left(\frac{t\sin^2(\theta)}{1 - t\sin(\theta)\cos(\theta) }\right)\\
& = \frac{\pi}{2} -\arctan\left(\frac{\frac{1}{t} - \sin(\theta)\cos(\theta) }{\sin^2(\theta)}\right).
\end{align}
Since these expressions are analytic on $\mathbb R$ (up to removable singularities), this formula must hold for all of $\mathbb R$. The proof of the second formula in \eqref{eq:h_t_alternate_form} follows from writing (with respect to the principal branch of the logarithm)
\begin{align}
\frac{\pi}{2} - \arctan\left(\dfrac{\frac{1}{t} - \sin(\theta)\cos(\theta)}{\sin^2(\theta)}\right) &= \mathrm{Arg}\left(\frac{1}{t} - \sin(\theta)\cos(\theta)+ i \sin^2(\theta)\right) \\
&=\mathrm{Arg}\left(\frac{1}{t}  + \frac{i}{2}(1 - e^{-i2\theta})\right).
\end{align}
We recognize that the above remains in the upper half plane for all $\theta$, so the  argument function is well defined with our choice of branch.

\subsection{Analysis of the phase function in homogeneous coordinates}
 
 To prove Proposition \ref{prop:W_structure}, we instead analyze the simpler map
\begin{align}
\psi_t(\zeta) := \arctan(t-\zeta) + \arctan(\zeta) = \int^{\zeta}_{\zeta-t} \frac{1}{q^2 + 1} dq.
\end{align}
We remark that this function arises out of writing the integral \eqref{eq:vorticity_profile} in homogeneous coordinates instead of polar coordinates, with $\eta = \xi \zeta$. We may also re-write
\begin{align}
h_t(\theta) &=\csc(\theta) \psi_t(\cot(\theta)), \label{eq:h_in_terms_of_psi} \\
 \gamma(\theta) &= (i -\cot(\theta)) \psi_t(\cot(\theta)). \label{eq:gamma_in_terms_of_psi}
\end{align}
We also write the derivatives of $h_t$ in terms of $\psi_t$.
 \begin{align}
h'_t(\theta) &=-\csc^3(\theta)\psi'_t(\cot (\theta)) - \cot(\theta) \csc (\theta )\psi_t(\cot (\theta)),\label{eq:hprime_in_terms_of_psi}\\
h''_t(\theta)&= \csc^5 (\theta) \psi_t''(\cot (\theta) ) + 4\cot( \theta) \csc^3( \theta)\psi_t'(\cot(\theta)) \\
&\quad +( \csc^3 (\theta) +  \cot^2( \theta) \csc( \theta))\psi_t(\cot( \theta)).\label{eq:hprimeprime_in_terms_of_psi}
\end{align}

Below, we record some properties of $\psi_t$ which are necessary to prove Proposition \ref{prop:W_structure}. 
\begin{lemma} Let $t > 0$. 
The map $\psi_t : \mathbb R \to \mathbb R_+$ satisfies the following:
\begin{enumerate}\label{lemma:psi_properties}
\item $\psi_t(\zeta)$ and $\psi_t\left(\frac{1}{\zeta}\right)$ are both analytic on $\mathbb R$. In particular,
\begin{align}\label{eq:psi_at_infinity_to_3rd_order}
\psi_t\left(\frac{1}{\zeta}\right) = t \zeta^{2} + t^{2} \zeta^{3} + O_t\left(\zeta^{4}\right)
\end{align}

\item We have that $\frac{1}{\psi_t}$ is strictly convex, that is, for all $\zeta \in \mathbb R$, 
\begin{align}\label{eq:reciprocal_convexity}
\psi''_t(\zeta)\psi_t(\zeta) < {2(\psi_t'(\zeta))^2}.
\end{align}
\end{enumerate}
\end{lemma}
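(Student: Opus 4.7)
My plan is to treat the two parts separately, each via direct computation exploiting an appropriate representation of $\psi_t$.

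For part 1, the analyticity of $\psi_t$ on $\mathbb R$ is immediate from the defining formula $\psi_t(\zeta) = \arctan(\zeta) - \arctan(\zeta-t)$, since each summand is real-analytic. To show $\psi_t(1/\zeta)$ extends analytically through $\zeta = 0$, I would apply the substitution $q = 1/s$ inside the integral representation $\psi_t(1/\zeta) = \int_{1/\zeta - t}^{1/\zeta}\frac{dq}{1+q^2}$, which (noting that the integration interval stays on one side of $0$ for small $\zeta$) yields
$$\psi_t(1/\zeta) = \arctan\!\left(\tfrac{\zeta}{1-t\zeta}\right) - \arctan(\zeta),$$
manifestly analytic for $|\zeta| < 1/t$. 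Expanding $\zeta/(1-t\zeta) = \zeta + t\zeta^2 + t^2\zeta^3 + O(\zeta^4)$ and feeding this into $\arctan(u) = u - u^3/3 + O(u^5)$, the leading $\zeta$ and $-\zeta^3/3$ contributions cancel exactly against the Taylor series of $\arctan(\zeta)$, yielding \eqref{eq:psi_at_infinity_to_3rd_order}.

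For part 2, I first observe that since $\psi_t > 0$, the inequality $\psi_t''\psi_t < 2(\psi_t')^2$ is equivalent to $(1/\psi_t)'' > 0$, matching the stated convexity. Setting $a = \zeta$, $b = \zeta-t$ and computing $\psi_t', \psi_t''$ directly, the claim reduces, after invoking the algebraic identity $a(1+b^2)^2 - b(1+a^2)^2 = t[(1-ab)^2 - ab(a+b)^2]$, to the positivity of
$$(a+b)^2(t - ab\,\psi_t) + \psi_t(1-ab)^2.$$
To make this transparent I would introduce trigonometric variables $\alpha = \arctan a$, $\beta = \arctan b$, writing $\delta = \alpha-\beta = \psi_t \in (0,\pi)$ and $\sigma = \alpha+\beta \in (-\pi,\pi)$. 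Expressing $a \pm b$, $ab$, $1-ab$, and $t = a - b$ in terms of $\sigma, \delta$ via standard product-to-sum identities --- in particular $\cos\delta + \cos\sigma = 2\cos\alpha\cos\beta$ --- the bracketed quantity reduces, after cancelling the positive common factor $\cos^3\alpha\cos^3\beta$, to
$$\sin^2\sigma\,(\sin\delta - \delta\cos\delta) + \delta\cos\alpha\cos\beta.$$
Both summands are non-negative: $\sin\delta - \delta\cos\delta$ vanishes at $\delta = 0$ and has derivative $\delta\sin\delta > 0$ on $(0,\pi)$, hence is positive there, while the second term is strictly positive since $\alpha,\beta \in (-\pi/2,\pi/2)$, yielding strict positivity.

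The main obstacle is that in the regime $\zeta \to +\infty$ the factor $(1-ab)^2 - ab(a+b)^2$ becomes large and negative, so positivity of the bracket requires a delicate cancellation with the $t(a+b)^2$ piece. The trigonometric change of variables is what makes the cancellation transparent: the potentially dangerous regime corresponds to $\sigma$ approaching $\pm\pi$, where $\cos\alpha\cos\beta \to 0^+$ stays strictly positive, and this residual positivity is exactly what drives the inequality. Log-concavity-based arguments (Prékopa--Leindler applied to $\psi_t = \mathbf{1}_{[0,t]} \ast \tfrac{1}{1+q^2}$) are unavailable since $\tfrac{1}{1+q^2}$ fails to be log-concave on $\mathbb R$, which is why the argument must proceed through the algebraic identity above.
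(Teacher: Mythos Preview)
Your proof is correct. Part 1 is essentially the same as the paper's: both derive the identity $\psi_t(1/\zeta) = \arctan\!\bigl(\tfrac{\zeta}{1-t\zeta}\bigr) - \arctan(\zeta)$ for small $\zeta$ (you by the substitution $q = 1/s$ in the integral, the paper by the identity $\arctan(x) = \tfrac{\pi}{2}\,\mathrm{sgn}(x) - \arctan(1/x)$) and then Taylor expand.

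For Part 2 your route is genuinely different from the paper's and, in some respects, cleaner. The paper first disposes of the range $\zeta \in [0,t]$ by observing $\psi_t'' < 0$ there, then uses the symmetry about $\zeta = t/2$ to reduce to $\zeta > t$; it introduces the variables $a = 1/\sqrt{1+\zeta^2}$, $b = 1/\sqrt{1+(t-\zeta)^2}$, rewrites the inequality as $(\arcsin b - \arcsin a)\bigl(b^3\sqrt{1-b^2} - a^3\sqrt{1-a^2}\bigr) < (b^2-a^2)^2$, and finishes via the mean value theorem on $\arcsin$ together with an elementary factoring trick. Your argument instead treats all $\zeta$ at once: after the algebraic identity $a(1+b^2)^2 - b(1+a^2)^2 = t[(1-ab)^2 - ab(a+b)^2]$ with $a = \zeta$, $b = \zeta-t$, the substitution $\alpha = \arctan a$, $\beta = \arctan b$ and the manipulation $\cos^2\sigma = 1 - \sin^2\sigma$ reduce everything (via $\sin\alpha\sin\beta + \cos\alpha\cos\beta = \cos\delta$) to $\sin^2\sigma\,(\sin\delta - \delta\cos\delta) + \delta\cos\alpha\cos\beta$, which is manifestly a sum of two nonnegative terms with the second strictly positive. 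What your approach buys is uniformity --- no case split on the sign of $\psi_t''$ and no symmetry reduction --- while the paper's approach has the minor advantage of not needing to discover the algebraic identity, since the MVT step bypasses it.
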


\begin{proof}
\textit{Step 1 (analyticity):} 
The fact that $\psi_t$ is analytic is trivial. To see $\psi_t\left(\frac{1}{\zeta}\right)$ is also analytic, it suffices to prove this in a neighborhood of 0. For this, we recall the formula
 \[
 \arctan(x) = \frac{\pi}{2} \mathrm{sgn}(x) -\arctan\left(\frac{1}{x}\right), \quad x \neq 0.
 \]
Hence, in the neighborhood $|\zeta| < \frac{1}{2t}$, the sign functions cancel to give
\begin{align}\label{eq:psi_t_at_infinity}
\psi_t\left(\frac{1}{\zeta}\right) = \arctan\left(\frac{\zeta}{1-t\zeta }\right) - \arctan(\zeta).
\end{align}
The above expression is clearly analytic in such a neighborhood. The expansion \eqref{eq:psi_at_infinity_to_3rd_order} is straightforward from the above.

\textit{Step 2 (positivity):}
We compute derivatives of $\psi_t$ below:
\begin{align}
\psi'_t(\zeta) &= \frac{1}{1 + \zeta^2} - \frac{1}{1+ (t - \zeta)^2}\\
\psi_t''(\zeta)&=- \frac{2 \zeta}{(1 +  \zeta^2)^2} - \frac{2(t - \zeta)}{(1+ (t- \zeta)^2)^2}.
\end{align}
In particular, $\psi_t'' <0$ for all $\zeta \in [0,t]$, so \eqref{eq:reciprocal_convexity} holds trivially in this case. On the other hand, $\psi_t(\zeta + \frac{t}{2})$ is even, so by symmetry, it remains to show \eqref{eq:reciprocal_convexity} when $\zeta >t$.

 It is convenient to define $a = \frac{1}{\sqrt{1 +  \zeta^2}}$ and $b =\frac{1}{\sqrt{ 1 + (t - \zeta)^2}}$. Then,
 \[
 \zeta  = \frac{\sqrt{1-a^2}}{a}, \quad \zeta -t = \frac{\sqrt{1-b^2}}{b}.
 \]
 Thus, \eqref{eq:reciprocal_convexity} is equivalent to showing
\[
\left(\arctan(\frac{\sqrt{1-a^2}}{a})-\arctan(\frac{\sqrt{1-b^2}}{b})\right)\left(b^3 \sqrt{1-b^2}-a^3 \sqrt{1-a^2}\right) < (b^2 - a^2)^2,
\]
with $0 < a< b< 1$.
Using $\arctan(c) = \frac{\pi}{2} - \arctan(\frac{1}{c})$ for any $c >0$, we have
\begin{align}
 \arctan(\frac{\sqrt{1-a^2}}{a}) - \arctan(\frac{\sqrt{1-b^2}}{b})& = \arctan(\frac{b}{\sqrt{1-b^2}}) -\arctan(\frac{a}{\sqrt{1-a^2}}) \\
&=  \arcsin(b) -\arcsin(a).
\end{align}
Thus, it suffices to show
\begin{align}\label{eq:transformed_ineq}
(\arcsin(b) - \arcsin(a))(b^3\sqrt{1-b^2} - a^3\sqrt{1-a^2}) < (b^2-a^2)^2,
\end{align}
for all $0< a < b < 1$. 
By the mean value theorem,
\[
\frac{\arcsin(b) - \arcsin(a)}{b-a} < \frac{1}{\sqrt{1-b^2}},
\]
On the other hand, 
\begin{align}
\frac{b^3\sqrt{1-b^2} -a^3 \sqrt{1-a^2} }{b-a} &= \frac{(b^3-a^3)\sqrt{1-b^2}}{b-a} +\frac{a^3 ( \sqrt{1-b^2} - \sqrt{1-a^2})}{b-a}\\
&\quad <  \frac{(b^3-a^3)\sqrt{1-b^2}}{b-a}.
\end{align}
Thus,
\[
\frac{(\arcsin(b) - \arcsin(a))(b^3\sqrt{1-b^2} - a^3\sqrt{1-a^2})}{(b-a)^2} < \frac{b^3-a^3}{b-a} = b^2 + ab + a^2 < (a+b)^2.
\]
Multiplying by $(b-a)^2$ on each side, we get \eqref{eq:transformed_ineq}.
\end{proof}

\subsection{Proof of Proposition \ref{prop:W_structure}}
Below, we give the proof of Proposition \ref{prop:W_structure}.
\begin{proof}
\textit{Step 1 ($\pi$-periodicity and analyticity):} 
From the formula \eqref{eq:gamma_in_terms_of_psi} it is clear that $\gamma_t(\theta)$ is $\pi$-periodic. To see that $\gamma_t(\theta)$ is analytic,

\textit{Step 2 (negativity):} \eqref{eq:curvature_condition}.
We shall suppress subscripts in $t$ when obvious. First, we observe that
\[
\mathrm{Im}\Big[\overline{\gamma'} \gamma''\Big]= \mathrm{Im}[(ih + h')(-h -2ih' +h'')] = h h'' - 2(h')^2 -h^2.
\]
Applying \eqref{eq:hprime_in_terms_of_psi} and \eqref{eq:hprimeprime_in_terms_of_psi}, and evaluating $\psi,\psi'$ and $\psi''$ at $\zeta = \cot \theta$, we have
\begin{align}
h^2 + 2(h')^2 - h'' h&= \csc^2 \theta \psi^2 \\
&\quad +2 \csc^6 \theta (\psi')^2 + 4 \cot \theta \csc^4\theta \psi'\psi +2 \cot^2\theta \csc^2\theta \psi^2\\
&\quad -( \csc^6 \theta \psi''\psi + 4\cot \theta \csc^4\theta \psi'\psi+( \csc^4 \theta +  \cot^2 \theta \csc^2 \theta)\psi^2)\\
&=\csc^6\theta (2 (\psi')^2 -\psi \psi'').
\end{align}
Then, by \eqref{eq:reciprocal_convexity} in Lemma \ref{lemma:psi_properties}, we conclude that the above is strictly positive, except possibly at $\theta \equiv 0 \mod \pi$.

We analyze the case of $\theta$ near 0 separately.
 For $|\theta| < \frac{1}{2t}$, we can use the formula \eqref{eq:psi_t_at_infinity} to write
\[
\psi_t(\cot(\theta)) = t\theta^2 + t^2\theta^3  + \theta^3 R_t(\theta),
\]
where $R_t(\theta)$ is analytic on $|\theta| < \frac{1}{2t}$.  On the other hand, $\cot(\theta) = \frac{1}{\theta} -\frac{\theta}{3} + O(\theta^3)$. Hence, up to redefining $R_t(\theta)$, we have
\begin{align}
\label{eq:theta0expansion}
\gamma(\theta) =  -t\theta + (-t^2 + it)\theta^2 +  \theta^3 R_t(\theta).
\end{align}
Then, 
\[
\mathrm{Im}\Big[\overline{\gamma'(0)} \gamma''(0)\Big] = -2t^2< 0.
\]
We conclude that $\mathrm{Im}\Big[\overline{\gamma'(\theta)} \gamma''(\theta)\Big] < 0$ for all $\theta$.

\textit{Step 2 (showing bijectiveness):} We have thus shown that the tangent vector of $\gamma'(\theta)$ always points in the clockwise direction. We can then define $W(\theta)$ by the contour integral
\[
W(\theta) = W(0)+ \mathrm{Im}\left[\int_{\gamma' |_{[0,\theta]}} \frac{1}{z} dz\right],
\]
with $W(0) = \mathrm{Arg}(\gamma'(0))  = \pi$ from  \eqref{eq:theta0expansion}.
The condition \eqref{eq:curvature_condition} implies, in particular, that $\gamma' \neq 0$ for all $\theta$. Thus the above integral is well-defined. Moreover, \eqref{eq:curvature_condition} implies $W'(\theta) < 0$. By the $\pi$ periodicity of $\gamma$, we have $W(\theta + \pi) = W(\theta) +2\pi n$, where $n$ is the winding number of $\gamma'|_{[0,\pi]}$. To see that $n = 1$, we use \eqref{eq:gamma_in_terms_of_psi} to write
\[
\gamma'(\theta) = (\cot(\theta) - i)\csc^2(\theta)\psi'(\cot(\theta))+\csc(\theta)^2\psi(\cot(\theta))
\]
Since $\psi'(\zeta)$ has a unique zero, there exists a unique $\theta_* \in (0,\pi)$ with $\mathrm{Im}(\gamma'(\theta_*)) = 0$. Moreover, at this point, $\csc^2(\theta_*)\psi(\cot(\theta_*)) >0$, so $\gamma'(\theta) \in \mathbb R_+$. On the other hand, $\gamma'(0) \in \mathbb R_-$. Hence, $n = 1$.
We conclude that $W$ defines a diffeomorphism from $\mathbb R/(\pi \mathbb Z)$ to $\mathbb R/(2\pi \mathbb Z)$.
\end{proof}

\section{Van der Corput type lemma}\label{sec:stat_phase}

In this section, we consider general phases of the form
\[
\Psi(r,\theta) = \rho r \sin(\theta +\alpha) + \frac{h(\theta)}{r}.
\]
and general multipliers $m(\theta)$. We then compute estimates on the integral
\begin{align}\label{eq:I}
I(\rho,\alpha) =\int  \int_0^\infty  e^{i \Psi(r,\theta)} v(r,\theta) m(\theta) dr d\theta.
\end{align}
$v$ is supported on $\theta \in J$. By analogy with \eqref{gamma}, we introduce the curve $\gamma: \mathbb{R} \rightarrow \mathbb C\setminus\{0\}$. 
\begin{align} \label{gamma:wo:t}
\gamma(\theta) := -e^{-i\theta} h(\theta).
\end{align}
We then prove a general lemma similar to the Van der Corput and stationary phase lemmas (in fact, it could be seen as a sort of interpolation between the two  results), which applies to integrals of this form. 

\begin{lemma} \label{lemma:quantitative_analysis}
Let  $h \in C^\infty(S^1;\mathbb C)$. Set $\gamma(\theta) = -e^{-i\theta}h(\theta)$, and let $m \in C^\infty( S^1; \mathbb C)$. Assume there exists some constant $\delta_0 >0$ such that $|\gamma'(\theta)|  \geq  \delta_0$, and $\gamma'$ has a winding number $n\in \mathbb N$ about the origin. In particular, $W(\theta) := \mathrm{Arg}(\gamma'(\theta))$ is well-defined as a $n$-to-1 map on $S^1$.

Next, we assume $\beta > \frac{1}{2}$. Define the constants $\kappa^{(\infty)},\kappa^{(1)}, \kappa^{(\$)} >0 $ by the formulas
\begin{align}
\kappa^{(\infty)} &:=\left\|\frac{1}{|\gamma'(\theta)|^{\beta}} \left(1  +\frac{|\gamma''(\theta)|}{|\gamma'(\theta)|} \right)\left|\frac{m(\theta)}{W'(\theta)}\right|\right\|_{L^\infty}, \\
\kappa^{(1)} &:=\left\|\frac{1}{{|\gamma'(\theta)|}} \left( \left(1  +\frac{|\gamma''(\theta)|}{|\gamma'(\theta)|} \right) m(\theta) +m'(\theta) \right)\right\|_{L^1} ,\\
\kappa^{(\$)} &:= \|m'\|_{L^1}  + \|\gamma'\|_{L^\infty}.
\end{align}  

Let $v \in C^1( (0,\infty) \times S^1 ; \mathbb C)$.  
Then, integral \eqref{eq:I} obeys the bound
\begin{align}
|I( \rho,\alpha)|&\lesssim_{\delta_0,n} \left(  \left( \rho^{\beta - 1}(1 + \rho^{-1}) \kappa^{(\infty)} + \kappa^{(1)} \right)\log(\rho  + 2)+\kappa^{(1)}(\log(\rho^{-1} + 2)  + \log(\kappa^{(\$)}))\right)\\
&\quad \cdot ( \|\langle r\rangle^{2\beta + 1} v\|_{L^\infty} + \|\langle r\rangle^{2\beta + 2} (v_r,\frac{1}{r}v_\theta)\|_{L^\infty}).
\end{align}
\end{lemma}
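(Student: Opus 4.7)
My plan is to prove the bound by a dyadic decomposition of the $(r,\theta)$-plane according to the size of $|\nabla\Psi|$ in the natural metric $dr^2 + r^{-2}d\theta^2$, combining a volume estimate near stationary points with integration by parts away from them. The starting point is the identity
\[
\Delta(r,\theta) := \rho e^{i\alpha} - \frac{\gamma'(\theta)}{r^2} = e^{-i\theta}\Big(\frac{\partial_\theta\Psi}{r} + i\,\partial_r\Psi\Big),
\]
obtained by expanding $\gamma'(\theta) = e^{-i\theta}(ih(\theta)-h'(\theta))$, which yields $|\Delta|^2 = (\partial_r\Psi)^2 + r^{-2}(\partial_\theta\Psi)^2$. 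In particular the stationary set is exactly $\{\Delta=0\} = \{r^2 = |\gamma'|/\rho,\ W(\theta) = \alpha\}$. Since $W$ is $n$-to-one onto $S^1$, I would partition $S^1$ into $n$ arcs on each of which $W$ is a diffeomorphism, reduce to the case $n = 1$, and absorb the factor $n$ into the implicit constant.

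I would then fix a smooth dyadic partition $1 = \sum_{k\in\mathbb{Z}}\chi_k(s)$ with $\chi_k$ supported in $s \sim 2^k$, and write $I = \sum_k I_k$ where
\[
I_k = \int\!\!\int e^{i\Psi}\, v\, m\, \chi_k(|\Delta|)\, dr\, d\theta,
\]
bounding each $I_k$ by one of two complementary estimates. The first is a \emph{volume bound}, used at small $k$: since $|e^{i\Psi}| = 1$, one has $|I_k| \lesssim \|vm\|_{L^\infty}\cdot \mathrm{vol}\{|\Delta|\sim 2^k\}$, and after the change of variables $(r,\theta) \mapsto (R,\phi) := (|\gamma'(\theta)|/r^2, W(\theta))$ with Jacobian $2|\gamma'|W'/r^3$, the set $\{|\Delta|\sim 2^k\}$ has $(r,\theta)$-volume of order $2^{2k}\,r^3/(|\gamma'|W')$ evaluated near $r^2 \sim |\gamma'|/\rho$. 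Inserting the weight $\langle r\rangle^{2\beta+1}$ from $v$, which at the stationary scale becomes $(|\gamma'|/\rho)^{(2\beta+1)/2}$, produces precisely the combination $\rho^{\beta-1}|\gamma'|^{-\beta}|m/W'|$, i.e.\ the $\rho^{\beta-1}\kappa^{(\infty)}$ term. The second is an \emph{IBP bound}, used at large $k$: the vector field $L = -i|\Delta|^{-2}(\partial_r\Psi\,\partial_r + r^{-2}\partial_\theta\Psi\,\partial_\theta)$ satisfies $L e^{i\Psi} = e^{i\Psi}$, so one IBP gains $|\Delta|^{-1}\sim 2^{-k}$, and the adjoint $L^*$ distributes one derivative onto $v$ (absorbed by $\|(v_r, r^{-1}v_\theta)\|$), onto $m$ (producing the $\|m'\|$ piece of $\kappa^{(1)}$), or onto $\chi_k$. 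In the latter case, the chain-rule factor $\nabla|\Delta|$ involves $\gamma''(\theta)/(r^2|\gamma'|)$, which is exactly what forces the factor $(1 + |\gamma''|/|\gamma'|)$ appearing in both $\kappa^{(\infty)}$ and $\kappa^{(1)}$; moreover, the $L^1$ structure of $\kappa^{(1)}$ emerges after pushing the $\theta$-integral through the change of variables $\theta \mapsto \alpha' = W(\theta)$, in which the Jacobian $W'$ cancels a factor and leaves a clean integral against $d\alpha'$.

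The bound is then assembled by summing the two families across dyadic scales. The smallest relevant $2^{k_{\min}}$ is set by saturation of the volume bound against $\|v\|_{L^\infty}$ and scales like $\rho^{-1}$; the largest $2^{k_{\max}}$ is at most $\rho + \|\gamma'\|_{L^\infty}/r_{\min}^2$, accounting for the $\log\kappa^{(\$)}$ term; the intermediate range produces the $\log(\rho+2)$ and $\log(\rho^{-1}+2)$ contributions from the two halves of the split.

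The main obstacle I anticipate is the precise IBP bookkeeping, in particular the derivative $\chi_k'(|\Delta|)\nabla|\Delta|$ and the matching of the exponents of $|\gamma'|$ that appear in the volume bound (from the Jacobian and the $v$-weight) versus the IBP bound (from $|\Delta|^{-1}$). The interpolation yields a geometric series in $k$ whose convergence requires precisely $\beta > 1/2$, explaining the hypothesis on $\beta$. A secondary technical concern is the behavior of the $r$-integral at $r\to 0$ and $r\to\infty$; this is the reason for the polynomial weights $\langle r\rangle^{2\beta+1}$ and $\langle r\rangle^{2\beta+2}$ on $v$ and its derivatives, which ensure convergence and vanishing of IBP boundary terms uniformly across the dyadic scales.
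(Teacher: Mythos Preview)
Your approach is essentially the paper's: a dyadic decomposition in $|\Delta| = |\rho e^{i\alpha} - \gamma'(\theta)/r^2|$, a volume bound near the stationary set via the change of variables $\theta \mapsto W(\theta)$, and integration by parts elsewhere using (an equivalent form of) the operator $e^{-i\theta}(\partial_r - \tfrac{i}{r}\partial_\theta)$, with the derivative landing on $m$, $v$, the cutoff, or the reciprocal of $\Delta$ producing exactly the $\kappa^{(\infty)}$ and $\kappa^{(1)}$ structures. One correction: there is no finite $k_{\max}$, since $r$ ranges over $(0,\infty)$ and $|\Delta|$ is unbounded; the $\log\kappa^{(\$)}$ term does not come from truncating the dyadic sum but from the fact that in the large-$|\Delta|$ regime one actually needs \emph{two} IBP estimates---one giving a uniform $\kappa^{(1)}$ bound per scale and a second giving $N^{-1}\log(N)\,\kappa^{(\$)}$ decay (by keeping the $|\Delta|^{-1}$ gain and integrating $|v|/r$ from the lower endpoint $r\gtrsim\sqrt{|\gamma'|/N}$)---and summing $\min\{\kappa^{(1)},\,\kappa^{(\$)}N^{-1}\log N\}$ over $N$ produces the $\log\kappa^{(\$)}$.
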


\begin{proof}
\textit{Step 1 (partition of unity):} 
 For a dyadic $N \in \{\ldots, \frac{1}{2},1,2,\ldots\}$,
we  define the cutoff functions 
\[
\underline\chi_{N}(r,\theta)=  \chi\left(\frac{1}{N}\left|\rho e^{i\alpha} - \frac{ \gamma'(\theta)}{r^2}\right|\right) - \chi\left(\frac{2}{N}\left|\rho e^{i\alpha}-   \frac{ \gamma'(\theta)}{r^2}\right|\right)\geq 0,
\]
so that we have the partition of unity
\[
\sum_N \chi_N \equiv 1,
\]
and 
\begin{align}
\mathrm{spt}(\underline\chi_N) \subseteq \left\{\frac{N}{4} \leq \left|\rho e^{i\alpha} - \frac{ \gamma'(\theta)}{r^2}\right| \leq 4N\right\} 
\end{align}
We decompose our integral according to this partition:
\begin{align}
I&=\sum_N\int  \int_0^\infty  e^{i  \Psi }  \underline\chi_N v m dr d\theta \\
&= \sum_N I_N.
\end{align}

\textit{Step 2 (very slow phase):} We bound $I_N$ from above when $N < \frac{\rho}{100}$. 
Then, within the support of $\underline \chi_N$, we can bound the modulus of $e^{i \alpha} \rho - \frac{\gamma'(\theta)}{r^2}$ as follows,
\begin{align}
\left|\rho - \frac{|\gamma'(\theta)|}{r^2}\right| \leq 4 N.
\end{align}
Let us rescale the first of these bounds to get
\[
\left|1 - \frac{|\gamma'(\theta)|}{\rho r^2}\right| \leq \frac{4N}{\rho}.
\]
Since $\frac{N}{\rho} < \frac{1}{100}$, the above implies $\left|\frac{\sqrt{\rho }r}{\sqrt{|\gamma'(\theta)|}} -1\right| < \frac{12N}{\rho}$, i.e.
\[
\left|r -\sqrt{  \frac{|\gamma'(\theta)|}{\rho}}\right| \lesssim \frac{N}{\rho} \sqrt{\frac{|\gamma'(\theta)|}{\rho}}.
\]In particular, we have $r \sim \sqrt{  \frac{|\gamma'(\theta)|}{\rho}}$ as well. 
On the other hand, using $\gamma'(\theta ) = e^{iW(\theta)} |\gamma'(\theta)|$, we can control distances in the angle as well:
\begin{align} \nonumber
\rho |e^{i \alpha} - e^{i W(\theta)}| &\le  \left|\rho e^{i \alpha} - \frac{|\gamma'(\theta)|}{r^2} e^{i W(\theta)}\right| +\left| e^{i W(\theta)}( \frac{|\gamma'(\theta)|}{r^2} - \rho)\right|  \\ \nonumber
&\le  \left|\rho e^{i \alpha} - \frac{\gamma'(\theta)}{r^2} \right|+ \left| \frac{|\gamma'(\theta)|}{r^2} - \rho\right| \\
&\lesssim  N.
\end{align}
Thus, for any $p \in \mathbb R$, we have
\begin{align}\label{eq:r_vol_bound}
\int  r^p \underline \chi_N(r,\theta) dr &\lesssim  \chi\left( \frac{\rho|e^{i\alpha} - e^{iW(\theta)}|}{C N} \right) \cdot \frac{N}{\rho} \cdot \left(\frac{|\gamma'(\theta)|}{\rho} \right)^{\frac{p+1}{2}}.
\end{align}
Consequently,  applying \eqref{eq:r_vol_bound} to the integral $I_N$ with $p = -2\beta - 1$, we have
\begin{align}
|I_N| &\lesssim { \|\langle r\rangle^{2\beta +1} v\|_{L^\infty}}\cdot \frac{N}{\rho} \cdot \int \left(\frac{|\gamma'(\theta)|}{\rho} \right)^{-\beta}  \chi\left( \frac{\rho|e^{i\alpha} - e^{iW(\theta)}|}{C N} \right)m(\theta)  d\theta.
\end{align}
Next, we temporarily assume $n =1$, so $W(\theta)$ is a diffeomorphism of $S^1$. Observe that with the change of variables $w = W(\theta)$, we have
\[
d\theta = (W^{-1})'(w) dw= \frac{dw}{|W'(W^{-1}(w))|},
\]
and so
\begin{align}
|I_N| &\lesssim  \|\langle r\rangle^{2\beta +1} v\|_{L^\infty}\cdot  N\rho^{\beta-1} \cdot  \int\chi\left(\frac{\rho|e^{i(w-\alpha)} - 1|}{C  N}\right) \frac{m(W^{-1}(w))}{|\gamma'(W^{-1}(w))|^{\beta}|W'(W^{-1}(w))|}dw\\
&\lesssim {N^2}\rho^{\beta-2} \|\langle r\rangle^{2\beta +1} v\|_{L^\infty} \left\|\frac{1}{|\gamma'(\theta)|^\beta} \cdot \frac{m(\theta)}{W'(\theta)} \right\|_{L^\infty}\\
&\lesssim N^2 \rho^{\beta-2} \|\langle r\rangle^{2\beta +1} v\|_{L^\infty} \kappa^{(\infty)}.  \label{eq:maximal_volume_bound}
\end{align}
When $n \geq 2$, we decompose $I_N$ in $\theta \in S^1$ into $n$ subintervals $J_1,\ldots,J_n$, for which $W|_{J_k}$ is injective for each $k \in\{1,\ldots,n\}$. Then, we verify the above estimate for each piece of this decomposition, and take the sum.

\textit{Step 3 (slow phase):}
We again assume $N  < \frac{\rho}{100}$, but instead, we integrate by parts. Observe that
\[
 {e^{-i\theta}}(\partial_r - \frac{i}{r} \partial_\theta)e^{i  \Psi} =i  \left(\rho e^{i\alpha} - \frac{ \gamma'(\theta)}{r^2}\right)e^{i  \Psi}.
\]
Using this identity, we have
\begin{align}
I_N
&=- {i}\int  \int_0^\infty   \frac{( \partial_r - \frac{i}{r} \partial_\theta)e^{i  \Psi }}{e^{i\theta} (\rho e^{i \alpha} - \frac{\gamma'(\theta)}{r^2})}\underline  \chi_Nvmdr d\theta\\
&= {i} \int  \int_0^\infty   e^{i  \Psi } \left( \partial_r - \frac{i}{r} \partial_\theta\right)\left(\frac{ e^{-i\theta}}{\rho e^{i \alpha} - \frac{\gamma'(\theta)}{r^2}}  \underline \chi_Nv m \right) dr d\theta\\
&=: \sum_{j=1}^4 I_{N,j}, \label{eq:integral_decomposition}
\end{align}
where the four terms above correspond to whether the derivative falls on the fraction, $\underline{\chi}_{N}$, $v$,  or $m$ in that order. For $I_{N,1}$, we have
\begin{align}
 \left( \partial_r - \frac{i}{r} \partial_\theta\right)\left( \frac{e^{-i\theta}}{\rho e^{i \alpha} - \frac{\gamma'(\theta)}{r^2}}\right) \label{eq:inverse_phase_derivative}
 &=- \frac{e^{-i\theta}}{r (\rho e^{i\alpha} - \frac{\gamma'(\theta)}{r^2})} - \frac{e^{-i\theta} (2 \gamma'(\theta) + i \gamma''(\theta))}{ r^3(\rho e^{i\alpha} - \frac{\gamma'(\theta)}{r^2})^2}.
  \end{align}
 Within the support of $\underline \chi_N$, we can bound the above in absolute value by
\begin{align}
 &\lesssim  \frac{\sqrt{\rho}}{N}\cdot \frac{1}{\sqrt{|\gamma'(\theta)|}} +  \frac{\rho^{\frac32}}{ N^2}\cdot  \frac{|\gamma'(\theta)| + |\gamma''(\theta)|}{|\gamma'(\theta)|^\frac32} \\
 &\lesssim \frac{\rho}{ N^2}\cdot \left(  1 +  \frac{|\gamma''(\theta)|}{|\gamma'(\theta)|} \right) \cdot \left ( \frac{\rho}{ |\gamma'(\theta)|} \right)^{\frac{1}{2}}.
\end{align}
Then, we bound this term similarly as in \eqref{eq:maximal_volume_bound}, using inequality \eqref{eq:r_vol_bound} with $p = -2\beta $ to get
\begin{align}
|I_{N,1}| &\lesssim \|\langle r\rangle^{2\beta} v\|_{L^\infty}  \frac{\rho}{ N^2} \int  \left(\int_0^\infty  \frac{1}{r^{2\beta}}\underline \chi_N dr \right)\cdot \left(  1 +  \frac{|\gamma''(\theta)|}{|\gamma'(\theta)|} \right) \cdot \left ( \frac{\rho}{ |\gamma'(\theta)|}\right)^{\frac{1}{2}} |m(\theta)|  d\theta \\
& \lesssim \frac{\|\langle r\rangle^{2\beta} v\|_{L^\infty}}{ N} \int  \chi\left(\frac{\rho|e^{i\alpha} - e^{iW(\theta)}|}{C N} \right) \left(1 + \left|\frac {\gamma''(\theta)}{\gamma'(\theta)}\right|\right)  \left(\frac{|\gamma'(\theta)|}{\rho} \right)^{-\beta} | m(\theta)| d\theta  \\
&\lesssim \rho^{\beta - 1} \|\langle r\rangle^{2\beta} v\|_{L^\infty} \kappa^{(\infty)}.
\end{align}
Next, for $I_{N,2}$, observe that
\begin{align}
|(\partial_r - \frac{i}{r}\partial_\theta)\underline \chi_N |&\lesssim \frac{|\gamma'(\theta)| + |\gamma''(\theta)|}{r^3 N}\times(\underline \chi_{N-1} + \underline \chi_{N} + \underline \chi_{N+1})\\
&\lesssim \frac{(|\gamma'(\theta)| + |\gamma''(\theta)|)\rho^{\frac{3}{2}}}{ N |\gamma'(\theta)|^{\frac{3}{2}}}\times (\underline \chi_{N-1} + \underline \chi_{N} + \underline \chi_{N+1}).
\end{align}
From this, one sees that $I_{N,2}$ obeys the same upper bound as $I_{N,1}$.

Next, using \eqref{eq:r_vol_bound} with $p = -2\beta -1$ on $I_{N,3}$, we have
\begin{align}
|I_{N,3}| &\lesssim \frac{1}{N} \int  \int_0^\infty  \underline \chi_N (| v_r| + \frac{1}{r}|v_\theta|) |m| drd\theta\\
&\lesssim \|\langle r\rangle^{2\beta + 1} ( v_r,\frac{1}{r} v_\theta)\|_{L^\infty}\cdot \frac{1}{\rho}\cdot  \int  \chi\left(\frac{\rho|e^{i\alpha} - e^{iW(\theta)}|}{C N} \right)\left(\frac{|\gamma'(\theta)|}{\rho} \right)^{-\beta} |m(\theta)| d\theta\\
&\lesssim N \rho^{\beta - 2}\|\langle r\rangle^{2\beta + 1} ( v_r,\frac{1}{r} v_\theta)\|_{L^\infty} \kappa^{(\infty)}.
\end{align}
Fourth,  we have
\begin{align}
|I_{N,4}|  &\lesssim  \frac{1}{N} \int  \int_0^\infty \frac{ \underline \chi_N}{r}  |v| |m' |drd\theta\\
&\lesssim \|\langle r\rangle^{2} v\|_{L^\infty} \frac{1}{\rho}\int  \left( \frac{|\gamma'(\theta)|}{\rho} \right)^{-1}| m'(\theta)| d\theta\\
&\lesssim  \|\langle r\rangle^{2} v\|_{L^\infty} \kappa^{(1)}.
  \end{align}
We combine the estimates:
\begin{align}
|I_N| \lesssim \left(\rho^{\beta - 1}(1+\rho^{-1})\kappa^{(\infty)}  + \kappa^{(1)}\right) \|\langle r\rangle^{2\beta+1}(v, v_r,\frac{1}{r}v_\theta)\|_{L^\infty}.
\end{align}

\textit{Step 4 (moderate phase):} Assume $\frac{ \rho}{100} < N$. This gives a cruder estimate on the size of $r$,
\[
r\gtrsim \sqrt{ \frac{|\gamma'(\theta)|}{N} } \quad \text{ for } (r,\theta) \in \mathrm{supp}(\underline\chi_N).
\]
In particular, we lose localization of $r$, so we rely on integrability of $v$ in $r$.
As in part 3, we decompose our integral into $I_{N,j}$, $ = 1,2,3,4$. To estimate $I_{N,1}$,  similarly as in \eqref{eq:inverse_phase_derivative}, we bound
\begin{align}
\left| \left( \partial_r - \frac{i}{r} \partial_\theta\right)\left( \frac{e^{-i\theta}}{\rho e^{i \alpha} - \frac{\gamma'(\theta)}{r^2}}\right)  \right|& \lesssim \frac{1}{rN} + \frac{|\gamma'(\theta)| + |\gamma''(\theta)|}{r^3 N^2} \\
&\lesssim \frac{1}{rN} \left( 1 + \frac{|\gamma''(\theta)|}{|\gamma'(\theta)|}\right)\\
&\lesssim \frac{1}{\sqrt{N}} \left(\frac{1}{|\gamma'(\theta)|^\frac{1}{2}} +\frac{|\gamma''(\theta)|}{|\gamma'(\theta)|^{\frac{3}{2}}}\right)\\
&\lesssim  \left(\frac{1}{|\gamma'(\theta)|} +\frac{|\gamma''(\theta)|}{|\gamma'(\theta)|^{2}}\right)r.
\end{align}
within the support of $\underline \chi_N$.
We then estimate
\begin{align}
|I_{N,1} | + |I_{N,2}|&\lesssim  \int  \int_0^\infty \underline \chi_N \left(\frac{1}{|\gamma'(\theta)|} +\frac{|\gamma''(\theta)|}{|\gamma'(\theta)|^{2}}\right) m(\theta) {|v(r,\theta)|} rdr d\theta \\
&\lesssim \left\|\left(\frac{1}{|\gamma'(\theta)|} +\frac{|\gamma''(\theta)|}{|\gamma'(\theta)|^{2}}\right) m(\theta)\right\|_{L^1}\|\langle r\rangle v\|_{L^1_rL^\infty_\theta}\\
&\lesssim {\kappa^{(1)}} \|\langle r\rangle^{2\beta+1} v\|_{L^\infty}.
\end{align}
We estimate $I_{N,3}$ and $I_{N,4}$,
\begin{align}
|I_{N,3}| &\lesssim \frac{1}{N}\left\| \frac{N}{|\gamma'|}m\right\|_{L^1} \|\langle r\rangle^2 (v_r,\frac{1}{r}v_\theta)\|_{L^1_rL^\infty_\theta}\\
&\lesssim \kappa^{(1)} \|\langle r\rangle^{2\beta + 2} (v_r,\frac{1}{r}v_\theta)\|_{L^\infty}.\\
  |I_{N,4}| &\lesssim \frac{1}{N}\left\|\frac{N}{|\gamma'|}m'\right\|_{L^1} \|\langle r\rangle v\|_{L^1_rL^\infty_\theta} \\
  &\lesssim  \kappa^{(1)} \| \langle r\rangle^{2\beta + 1} v\|_{L^\infty}.
\end{align}
In summary,
\[
|I_N| \lesssim \kappa^{(1)}\|\langle r\rangle^{2\beta + 2} (v_r,\frac{1}{r}v_\theta,v)\|_{L^\infty}.
\]

\textit{Step 5 (fast phase):} We again assume $\frac{ \rho}{100} < N$, now with the goal of bounding $I_N$ by $\frac{1}{N}$ at the expense of a pre-factor that is not controlled by $\kappa$. Here, we rely on the fact that $\frac{v}{r}$ is almost integrable.
Proceeding similarly as in Step 4, we have
\begin{align}
|I_{N,1}| + |I_{N,2}| &\lesssim \frac{1}{N} \int  \int_0^\infty \underline \chi_N \cdot \left(1 +\frac{|\gamma''(\theta)|}{|\gamma'(\theta)|}\right) m(\theta) \frac{|v(r,\theta)|}{r} dr d\theta \\
&\lesssim \frac{\|\gamma'\|_{L^\infty}}{N}\left\| \frac{1}{|\gamma'|}\left(1 +\frac{|\gamma''|}{|\gamma'|}\right) m\right\|_{L^1}\left(\sup_{\theta \in S^1}\int_{\frac{1}{C}\sqrt{|\gamma'(\theta)|/N}}^\infty \frac{1}{r\langle r\rangle} dr\right) \|\langle r\rangle v \|_{L^\infty}  \\
&\lesssim \frac{\log(N + 2)}{N}\kappa^{(1)} \kappa^{(\$)} \|\langle r\rangle v \|_{L^\infty}.
\end{align}
Now, for $I_{N,3}$ and $I_{N,4}$, one has
\begin{align}
|I_{N,3}| &\lesssim \frac{1}{N}\left\|m\right\|_{L^1} \|(v_r,\frac{1}{r}v_\theta)\|_{L^1_rL^\infty_\theta}\\
&\lesssim \kappa^{(1)}  \|\langle r\rangle^{2\beta} (v_r,\frac{1}{r}v_\theta)\|_{L^\infty}.\\
  |I_{N,4}| &\lesssim \frac{1}{N}\left\|m'\right\|_{L^1}\left(\sup_{\theta \in S^1}\int_{\frac{1}{C}\sqrt{|\gamma'(\theta)|/N}}^\infty \frac{1}{r\langle r\rangle} dr\right) \|\langle r\rangle v \|_{L^\infty}  \\
  &\lesssim \frac{\log(N+ 2)}{N} \kappa^{(\$)} \| \langle r\rangle v\|_{L^\infty}.
\end{align}

\textit{Step 6 (putting it all together):}
Summing over all $N \in \{\ldots,\frac{1}{2},1,2,\ldots\}$, we have
\begin{align}
|I| & \lesssim \sum_{N < \frac{\rho}{100}} |I_N| + \sum_{N > \frac{\rho}{100}} |I_N|\\
&\lesssim \Bigg[\left( \rho^{\beta - 1} (1 + \rho^{-1}) \kappa^{(\infty)} + \kappa^{(1)} \right)\sum_{N < \frac{\rho}{100}} \min\left\{\frac{N^2}{{\rho}},1\right\} \\
&\quad \quad\quad+ \langle \kappa^{(1)} \rangle\sum_{N > \frac{\rho}{100}} \min\left\{1, \frac{\kappa^{(\$)}\log(N+2)}{ N}\right\} \Bigg] \\
&\quad  \cdot \left( \|\langle r\rangle^{2\beta + 1} v\|_{L^\infty} + \|\langle r\rangle^{2\beta + 2} (v_r,\frac{1}{r}v_\theta)\|_{L^\infty}\right)\\
&\lesssim \left(  \left( \rho^{\beta - 1}(1 + \rho^{-1})\kappa^{(\infty)} + \kappa^{(1)} \right)\log(\rho+2)+\langle\kappa^{(1)} \rangle (\log(\rho^{-1} + 2)  + \log(\kappa^{(\$)}))\right)\\
&\quad \cdot \left( \|\langle r\rangle^{2\beta + 1} v\|_{L^\infty} + \|\langle r\rangle^{2\beta + 2} (v_r,\frac{1}{r}v_\theta)\|_{L^\infty}\right).
\end{align}
We complete the proof.
\end{proof}

\section{Estimates on the phase}
\label{sec:t}

In this section, we prove a set of estimates that, when combined with Lemma \ref{lemma:quantitative_analysis}, imply Theorem \ref{thm:decay}. 
Recall that
\begin{align} \label{ht:rec:1}
h_t(\theta) &= \frac{ \arctan(t-\cot(\theta)) + \arctan(\cot(\theta))}{\sin(\theta)},\\
m^z_t(\theta)& = \frac{\sin(\theta)}{\sin^2(\theta) + (\cos(\theta) - t\sin(\theta))^2},\\
m^y_t(\theta) &= \frac{\cos(\theta)}{\sin^2(\theta) + (\cos(\theta) - t\sin(\theta))^2}.
\end{align}
We also have the curve
\[
\gamma_t(\theta) = -e^{-i\theta}h_t(\theta) = (  i -\cot(\theta)) (\arctan(t-\cot(\theta)) + \arctan(\cot(\theta))).
\]

\begin{proposition} \label{propgam}
For all $t > 1$, the multiplier $m_t^z(\theta)$ satisfies the following bounds 
\begin{align}
 \left\|\frac{1}{|\gamma_t'(\theta)|^{2}} \left(1  +\frac{|\gamma_t''(\theta)|}{|\gamma_t'(\theta)|} \right)\frac{m_t^z(\theta)}{W'(\theta)}\right\|_{L^\infty}   &\lesssim \frac{1}{t^\frac{3}{2}},\\
\left\|\frac{1}{{|\gamma_t'(\theta)|}} \left( \left(1  +\frac{|\gamma_t''(\theta)|}{|\gamma_t'(\theta)|} \right) m^z_t(\theta) +(m^z_t)'(\theta) \right)\right\|_{L^1} &\lesssim \frac{\ln(t+2)}{t^\frac{3}{2}},\\
\|(m_t^z)'\|_{L^1} &\lesssim t^4,
\end{align}
and the multiplier $m_t^y(\theta)$ satisfies 
\begin{align}
\left\|\frac{1}{|\gamma_t'(\theta)|^{2}} \left(1  +\frac{|\gamma_t''(\theta)|}{|\gamma_t'(\theta)|} \right)\frac{m_t^y(\theta)}{W'(\theta)}\right\|_{L^\infty}   &\lesssim \frac{1}{t},\\
\left\|\frac{1}{{|\gamma_t'(\theta)|}} \left( \left(1  +\frac{|\gamma_t''(\theta)|}{|\gamma_t'(\theta)|} \right) m^y_t(\theta) +(m^y_t)'(\theta) \right)\right\|_{L^1} &\lesssim \frac{\ln(t+2)}{t},\\
\|(m_t^y)'\|_{L^1} &\lesssim t^4.
\end{align}
Finally, $\gamma_t$ satisfies 
\begin{align}
 \|\gamma_t'\|_{L^\infty}  \lesssim t^3.
\end{align}
\end{proposition}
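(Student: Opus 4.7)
I plan to partition $[-\pi/2,\pi/2]$ into the four zones $J_{bulk}$, $J_{left}$, $J_{critical}$, $J_{right}$ and establish sharp pointwise bounds on $h_t, h_t', h_t''$ (and therefore on $|\gamma_t'|^2 = h_t^2 + (h_t')^2$, $|\gamma_t''|^2 = (h_t-h_t'')^2 + 4(h_t')^2$, and $|W_t'|$ via \eqref{Wpd}), as well as on $m_t^{z,y}$ and their derivatives, in each zone. Each $\kappa$-norm will then follow by taking the supremum across zones (for the $L^\infty$ norms) or by summing the zonal contributions --- pointwise integrand times zone width --- for the $L^1$ norms. The principal analytic inputs are the representations \eqref{eq:h_in_terms_of_psi}--\eqref{eq:hprimeprime_in_terms_of_psi} of $h_t$ and its derivatives through $\psi_t(\cot\theta)$, the factorization $h_t^2 + 2(h_t')^2 - h_t'' h_t = \csc^6\theta\bigl(2(\psi_t')^2 - \psi_t\psi_t''\bigr)$ already established in the proof of Proposition \ref{prop:W_structure}, and the reciprocal convexity \eqref{eq:reciprocal_convexity} from Lemma \ref{lemma:psi_properties}, which provides a quantitative lower bound for this combination (and hence for $|W_t'|$).

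\textbf{Zonal estimates.} On $J_{bulk}$, the quantities $\psi_t, \psi_t', \psi_t''$ evaluated at $\cot\theta$ are $O(1)$ uniformly in $t$, so $h_t, h_t', h_t''$ and $\gamma_t', \gamma_t''$ are $O(1)$, $|W_t'|$ is bounded below by a constant depending only on $\delta$, and the denominator $\sin^2\theta + (\cos\theta - t\sin\theta)^2 \sim t^2$ yields $|m_t^{z,y}|, |(m_t^{z,y})'| \lesssim 1/t^2$; every zonal contribution is then $O(1/t^2)$. On $J_{critical}$, the rescaling $\sigma = t^2(\theta - 1/t)$ of \eqref{eq:sigma_rescaling1}--\eqref{eq:sigma_rescaling2} yields $h_t \sim t$, $h_t' \sim t^3$, $h_t'' \sim t^5$ (so $|\gamma_t'|\sim t^3$, $|\gamma_t''|\sim t^5$), $m_t^z \sim t/(1+\sigma^2)$, $m_t^y \sim t^2/(1+\sigma^2)$, and $|W_t'|=O(1)$ bounded below via the positivity at the limiting profile and \eqref{eq:reciprocal_convexity}. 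Combined with the zone width $1/t^2$, a direct calculation recovers all the stated bounds (with considerable slack in many places). This zone also saturates $\|\gamma_t'\|_{L^\infty}\sim t^3$ and is responsible for the crude bound $\|(m_t^{z,y})'\|_{L^1}\lesssim t^4$.

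\textbf{Intermediate regime.} The most delicate analysis is on $J_{left}\cup J_{right}$ in the transition $1 \ll \sigma \ll t$. Plugging $\psi_t'(\zeta) = (1+\zeta^2)^{-1} - (1+(t-\zeta)^2)^{-1}$ and the analogous expression for $\psi_t''$ at $\zeta=\cot\theta$ into \eqref{eq:hprime_in_terms_of_psi}--\eqref{eq:hprimeprime_in_terms_of_psi} and carefully tracking dominant terms, I find for $1\ll\sigma\ll t$ the scalings $h_t \sim t$, $h_t' \sim t^3/\sigma^2$, $h_t'' \sim t^5/\sigma^3$ (so $|\gamma_t'|\sim t^3/\sigma^2$, $|\gamma_t''|\sim t^5/\sigma^3$, $|\gamma_t''|/|\gamma_t'|\sim t^2/\sigma$), $m_t^z\sim t/\sigma^2$, $m_t^y\sim t^2/\sigma^2$, $(m_t^z)'\sim t^3/\sigma^3$, $(m_t^y)'\sim t^4/\sigma^3$. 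The resulting $\kappa^{(1)}$-integrand scales as $1/\sigma$, and its integral against $d\theta = d\sigma/t^2$ is logarithmic, easily below the stated $\log(t+2)/t^{3/2}$ threshold. A similar analysis handles the outer portion $\sigma\gg t$ (equivalently, $\theta$ bounded away from $1/t$), where $h_t\sim 1/\theta$, $h_t'\sim 1/\theta^2$, and the contributions are even smaller.

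\textbf{Main obstacle.} The principal technical challenge is the bookkeeping across scales: the naive critical rescaling \eqref{eq:sigma_rescaling1} is only sharp for bounded $\sigma$ (its error term in fact grows linearly in $\sigma$), whereas fixed-$\theta$ expansions are accurate only for $\theta$ bounded away from $1/t$. Bridging these two regimes requires the multiscale coordinates $(\theta,\varphi)$ with $\varphi = \theta^2/(\theta - 1/t)$ introduced in the introduction, under which the formal expansions can be matched seamlessly across the intermediate range. A secondary subtlety is extracting the quantitative lower bound on $2(\psi_t')^2 - \psi_t\psi_t''$ from \eqref{eq:reciprocal_convexity} in a form uniform in $t$, so that the denominator $|W_t'|$ in $\kappa^{(\infty)}$ remains controlled throughout each zone.
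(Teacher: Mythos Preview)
Your overall structure---zonal decomposition plus pointwise control of $h_t$, $\gamma_t'$, $\gamma_t''$, $W_t'$, $m_t^{z,y}$---matches the paper, and your bulk and critical-zone analyses are correct. The gap is in the intermediate regime on the \emph{right} of the critical line. The scaling $h_t' \sim t^3/\sigma^2$ for $1 \ll \sigma \ll t$ is valid on $J_{left}$ but false on $J_{right}$: the phase $h_t$ has a critical point $\theta_t^* = \tfrac{1}{t} + \tfrac{1}{\sqrt{\pi}}\,t^{-3/2} + O(t^{-2})$, i.e.\ at $\sigma_*\sim\sqrt{t}$, where $h_t'(\theta_t^*)=0$. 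There $|\gamma_t'| \sim |h_t| \sim t$, a factor of $t$ smaller than your scaling predicts, so $1/|\gamma_t'|$ is a factor of $t$ larger than you allow. Feeding this into the $\kappa^{(1)}$ integrand for the $y$-multiplier (where $m_t^y\sim t$, $(m_t^y)'\sim t^{5/2}$ near $\theta_t^*$) and multiplying by the natural window width $t^{-3/2}$ gives a contribution far above the target $\log(t)/t$.

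The paper handles this by further splitting $J_{right}=J_{inner\ right}\cup J_{critical\ right}\cup J_{outer\ right}$, the middle piece being a $t^{-3/2}$-window about $\theta_t^*$. On the inner and outer pieces a version of your analysis (carried out in the paper via explicit Taylor expansions in the multiscale variables $(\theta,\varphi)$ rather than ad hoc $\psi_t$-asymptotics) works. On $J_{critical\ right}$ the pointwise bound $1/|\gamma_t'|\lesssim 1/t$ is sharp at $\theta_t^*$ but saturated only on a tiny set, and the paper exploits this through an \emph{integrated} estimate (Lemma~\ref{lemma:gamma_t_integrated}),
\[
\int_{J_{critical\ right}} \frac{d\theta}{|\gamma_t'(\theta)|} \lesssim \frac{\log(t+2)}{t^{9/2}},
\]
obtained via the substitution $q=h_t'(\theta)/t$ and the bound $h_t''\sim t^{7/2}$. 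It is this lemma, not any quantitative form of \eqref{eq:reciprocal_convexity}, that closes $\kappa^{y,1}$; your proposal does not anticipate this extra boundary layer.
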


In the following subsections, we use asymptotic expansions to determine the leading order behavior of $\gamma_t, m_t^z$ and $m_t^y$ (and derived quantities) in a number of subintervals.  

\subsection{Multiscale expansions}\label{sec:expansions}

Below, we show that in a neighborhood of $\theta=0$, we can rewrite $h_t$ in terms of an analytic function of two variables.

\begin{lemma}\label{lemma:analytic}
For all $\theta \neq \frac{1}{t}$, we define
\[
\varphi = \varphi(\theta) := \frac{\theta^2}{\theta  -\frac{1}{t}  }.
\]
Then there exists $\delta > 0$ small and  analytic functions $H^\pm :(-\delta,\delta)^2 \to \mathbb R$ such that for all $\theta \in (-\delta,\delta)$ and $\varphi(\theta ) \in (-\delta,\delta)$, we have
\begin{align}\label{eq:h_t_expansion}
h_t(\theta) =\begin{dcases} 
\frac{1}{\theta} H^-\left(\theta, \varphi \right), & \theta< \frac{1}{t}, \\
\frac{1}{\theta} H^+\left(\theta, \varphi \right), & \frac{1}{t} <\theta.
\end{dcases}
\end{align} 
\end{lemma}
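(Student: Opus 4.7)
\textbf{Proof plan for Lemma \ref{lemma:analytic}.} My starting point is the relation $h_t(\theta)\sin\theta = \tfrac{\pi}{2} - \arctan(A(\theta)) = \mathrm{Arg}(L(\theta))$ derived en route to \eqref{eq:h_t_alternate_form}, where
\[
L(\theta) := \tfrac{1}{t} - \sin\theta\cos\theta + i\sin^2\theta = \tfrac{1}{t} + \tfrac{i}{2}\bigl(1 - e^{-2i\theta}\bigr).
\]
Since $\mathrm{Im}\,L(\theta) = \sin^2\theta \geq 0$, the quantity $L(\theta)$ always lies in the closed upper half plane, so the principal argument $\mathrm{Arg}(L)\in[0,\pi]$ is unambiguous. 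The entire task is then to exhibit $\theta\cdot\mathrm{Arg}(L(\theta))/\sin\theta$ as a real-analytic function of $(\theta,\varphi)$ near the origin.

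The key algebraic step is to introduce the $t$-independent function
\[
R(\theta) := \frac{\theta + (i/2)(1 - e^{-2i\theta})}{\theta^2}.
\]
A Taylor expansion at $\theta = 0$ gives $\theta + (i/2)(1 - e^{-2i\theta}) = i\theta^2 + \tfrac{2}{3}\theta^3 + O(\theta^4)$, so the apparent pole is removable and $R$ extends to an entire function with $R(0)=i$. This yields $L(\theta) = -(\theta - 1/t) + \theta^2 R(\theta)$; dividing the second term by $(\theta - 1/t)$ and recognizing $\theta^2/(\theta - 1/t) = \varphi$ produces the clean factorization
\[
L(\theta) = \bigl(\theta - \tfrac{1}{t}\bigr)\bigl(\varphi\, R(\theta) - 1\bigr).
\]

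I then fix $\delta > 0$ small enough that $\sup_{|\theta|\leq \delta}|R(\theta)|\cdot\delta < 1/2$, ensuring that for $(\theta,\varphi)\in(-\delta,\delta)^2$ the quantity $1 - \varphi R(\theta)$ remains in a small neighborhood of $1\in\mathbb C$ on which the principal logarithm $\log(1-\varphi R(\theta))$ is jointly real-analytic in $(\theta,\varphi)$. A short case analysis then extracts $\mathrm{Arg}(L)$ from the factorization. When $\theta < 1/t$ one has $\theta - 1/t < 0$ and $\varphi \leq 0$, so $L = |\theta - 1/t|(1 - \varphi R)$ with $1-\varphi R$ in the first quadrant, giving $\mathrm{Arg}(L) = \mathrm{Im}\log(1 - \varphi R(\theta))$. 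When $\theta > 1/t$ one has $\varphi > 0$, and the sign of $\mathrm{Im}(\varphi R - 1) \approx \varphi \,\mathrm{Im}\,R(0) = \varphi > 0$ places $\varphi R - 1$ in the second quadrant; rewriting $\varphi R - 1 = -(1 - \varphi R)$ then yields $\mathrm{Arg}(L) = \pi + \mathrm{Im}\log(1 - \varphi R(\theta))$. Multiplying each expression by the real-analytic factor $\theta/\sin\theta$ gives
\[
H^{-}(\theta,\varphi) := \frac{\theta}{\sin\theta}\,\mathrm{Im}\log\bigl(1 - \varphi R(\theta)\bigr), \qquad H^{+}(\theta,\varphi) := H^{-}(\theta,\varphi) + \frac{\pi\,\theta}{\sin\theta},
\]
both real-analytic on $(-\delta,\delta)^2$ as compositions of real-analytic maps, realizing \eqref{eq:h_t_expansion}.

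The step I expect to require the most care is the branch bookkeeping. A naive use of $\mathrm{Arg}(L) = \mathrm{Arg}(\theta - 1/t) + \mathrm{Arg}(\varphi R - 1) \pmod{2\pi}$ is awkward because $\varphi R - 1$ sits exactly near $-1\in\mathbb C$, which lies on the branch cut of the principal logarithm, so neither summand is individually jointly analytic in $(\theta,\varphi)$. The trick is to route the computation through $\varphi R - 1 = -(1 - \varphi R)$, which shifts the argument into a neighborhood of $1$ where $\log$ is safely analytic; the appearance of the two distinct branches $H^{\pm}$ is then nothing but the bookkeeping of the $\pm\pi$ jump introduced by the multiplication by $-1$ together with the sign of $\theta - 1/t$. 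A minor preliminary check is that $R$ is entire (the numerator vanishes to second order at $0$), which also guarantees that $\delta$ can be chosen independently of $t$.
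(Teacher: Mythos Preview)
Your proof is correct and essentially the same as the paper's: both rewrite $h_t(\theta)\sin\theta$ via \eqref{eq:h_t_alternate_form}, extract the analytic dependence on $(\theta,\varphi)$ by dividing out the factor $\theta-\tfrac1t$, and identify the $\pi$-jump between $H^-$ and $H^+$ with the sign change of that factor. Your complex-analytic packaging via the factorization $L(\theta)=(\theta-\tfrac1t)(\varphi R(\theta)-1)$ and $\mathrm{Im}\log(1-\varphi R)$ is slightly slicker than the paper's direct manipulation of the arctangent argument, but it produces exactly the same functions $H^\pm$ (indeed $\mathrm{Re}\,R(\theta)=(\theta-\sin\theta\cos\theta)/\theta^2$ and $\mathrm{Im}\,R(\theta)=\sin^2\theta/\theta^2$, so your $\mathrm{Im}\log(1-\varphi R)$ is precisely the paper's $-\arctan$ expression).
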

\begin{proof} Taking $\delta$ small enough, we ensure $\theta^2 < \min\{ \delta |\theta - \frac{1}{t}|,\delta^2\}$, and so
\begin{align}
|\frac{1}{t} - \sin(\theta ) \cos(\theta )| = |\frac{1}{t} - \theta| - O(\theta^3) > \frac{1}{2}|\frac{1}{t} - \theta|.
\end{align}
  Then, using \eqref{eq:h_t_alternate_form}, we have
\[
h_t(\theta) = \frac{{\pi\mathbf{1}_{\{\frac{1}{t}< \theta\}} } + \arctan\left(\frac{\sin^2(\theta )}{\frac{1}{t}  - \sin(\theta )\cos(\theta )} \right) }{\sin(\theta )}.
\] 
Inside the arctangent, we can rewrite
\[
\frac{\sin^2(\theta )}{\frac{1}{t} - \sin(\theta)\cos(\theta)} = - \frac{ \frac{1}{\theta- \frac{1}{t} } \cdot {\sin^2(\theta )}}{1 + \frac{\sin(\theta)\cos(\theta) - \theta }{\theta - \frac{1}{t} }} = - \frac{\varphi \cdot \frac{\sin^2(\theta )}{\theta^2}}{1 + \varphi \cdot \frac{  \sin(\theta)\cos(\theta) -\theta}{\theta^2}}.
\]
The above is clearly analytic in $\theta$ and $\varphi$ for both sufficiently small. Moreover, the above equals $\varphi$ to first order in $(\theta,\varphi)$. We then define
\[
H^\pm(\theta,\varphi) =  \begin{dcases}- \frac{\arctan\left(\frac{\varphi \cdot \frac{\sin^2(\theta )}{\theta^2}}{1 - \varphi \cdot \frac{\theta - \sin(\theta)\cos(\theta)}{\theta^2}}\right) }{\frac{\sin(\theta ) }{\theta}}, & \pm  = -,\\
\frac{\pi - \arctan\left(\frac{\varphi\cdot\frac{\sin^2(\theta )}{\theta^2}}{1 - \varphi \cdot \frac{\theta - \sin(\theta)\cos(\theta)}{\theta^2}}\right) }{\frac{\sin(\theta ) }{\theta}}, & \pm = +.
\end{dcases}
\]
\end{proof}
Observe that 
\begin{align}
\frac{d\varphi}{d\theta} &= \frac{2\varphi}{\theta} - \frac{\varphi^2}{\theta^2},\\
\frac{d^2\varphi}{d\theta^2} &=\frac{2\varphi}{\theta^2} - \frac{4\varphi^2}{\theta^3} +  \frac{2\varphi^3}{\theta^4}. 
\end{align}
Moreover, it can be convenient to write out $\varphi$ as
\[
 \varphi = \theta + \frac{\theta}{t(\theta - \frac{1}{t})} = \theta + \frac{1}{t} + \frac{1}{t^2(\theta - \frac{1}{t})}.
\]
Writing $H = H^\pm$, we compute $h'_t$ and $h''_t$ in terms of $H$. 
Using these identities, we compute the first derivative in terms of $h$:
\begin{align}
h'_t(\theta)&= -\frac{1}{\theta^2} H + \frac{1}{\theta} H_\theta  +\frac{1}{\theta} \frac{d\varphi}{d\theta} H_\varphi \\
&=\frac{1}{\theta^3} \left( -\theta H + \theta^2 H_\theta  + (2\varphi \theta -\varphi^2) H_\varphi\right),\\
&=\frac{1}{\theta^3} H_1(\theta,\varphi).
\end{align}
We also compute the second derivative,
\begin{align}
h_t''(\theta) &= \frac{2}{\theta^3} H -\frac{2}{\theta^2} H_\theta  + \left(-\frac{2}{\theta^2} \frac{d\varphi}{d\theta}  +\frac{1}{\theta} \frac{d^2\varphi}{d\theta^2}  \right)H_\varphi + \frac{1}{\theta} H_{\theta \theta} +  \frac{2}{\theta} \frac{d\varphi}{d\theta}H_{\theta\varphi} +\frac{1}{\theta}\left( \frac{d\varphi}{d\theta}\right)^2 H_{\varphi\varphi} \\
&= \frac{1}{\theta^5}\Big(2\theta^2 H - 2\theta^3 H_\theta +  ( -2\theta^2 \varphi - 2\theta  \varphi^2  +2 \varphi^3)  H_{\varphi}  \\
&\quad \quad   + \theta^4 H_{\theta \theta}+ (4\varphi \theta^3 - 2\varphi^2\theta^2) H_{\varphi \theta} +  (4\varphi^2 \theta^2-4\varphi^3 \theta + \varphi^4) H_{\varphi\varphi} \Big)\\
&=: \frac{1}{\theta^5} H_2(\theta,\varphi).
\end{align}
We also define
\begin{align}
G(\theta,\varphi) &:= \theta^6 |\gamma'(\theta)|^2 = \theta^4 H(\theta,\varphi)^2 + H_1(\theta,\varphi)^2,\\ 
K(\theta,\varphi) &:= -\theta^6 \mathrm {Im}\left[\overline \gamma'(\theta) \gamma''(\theta) \right] = \theta^4 H(\theta,\varphi)^2 + 2H_1(\theta,\varphi)^2 - H_2(\theta,\varphi) H(\theta,\varphi).
\end{align}
The motivation behind the definition of $K(\theta, \varphi)$ is that we use it to estimate $W'$; recall from \eqref{Wpeq} that $W'(\theta) = \frac{K(\theta, \varphi)}{G(\theta, \varphi)}$.

Aided by computer algebra, we compute higher order Taylor expansions $H,H_1, H_2, G$ and $K$ in the forthcoming subsections. This is done using the \texttt{sympy} library in the \texttt{python} programming language. See the script \texttt{expansions.py} available on the GitHub repository \cite{Flynn2025CAS}. This repository also contains the script \texttt{plots.py}, which was used to generate the plots in this paper.

\subsubsection{Expansions of $h_t$ for $\theta < \frac{1}{t}$} \label{sec:left_expansions}
For the case $\theta < \frac{1}{t}$, we note that $H^- = O(\varphi)$ uniformly in $\theta$. Thus, the $n$-th order expansions for $H^-, H^-_1,$  and $H_2^-$ have remainders of the form $O(\varphi \cdot (\theta,\varphi)^{n})$, and expansions for $G$ and $K$ have remainders of the form $O(\varphi^2 \cdot (\theta,\varphi)^{n})$. The exact order for each expansion is determined by what is necessary to prove upper and lower bounds in Section \ref{sec:estimates} below. The expansions are as follows:
\begin{align}
H^-(\theta,\varphi)&= 
- \varphi +O(\varphi \cdot (\theta,\varphi)^2),\\
H_1^-(\theta,\varphi) &= 
- \theta \varphi
+\varphi^{2}
+O(\varphi \cdot (\theta,\varphi)^2),\\
H_2^-(\theta,\varphi) &= 
2 \theta \varphi^{2}
- 2 \varphi^{3}
 + \theta^{4} \varphi
- 9 \theta^{3} \varphi^{2}
 + \frac{53 \theta^{2} \varphi^{3}}{3}
- 14 \theta \varphi^{4}
 + 4 \varphi^{5}
 + O(\varphi\cdot  (\theta,\varphi)^5),\label{eq:H_2_minus_expansion}\\
G^-(\theta,\varphi) &=\theta^{2} \varphi^{2}
- 2 \theta \varphi^{3}
 + \varphi^{4}
 + \frac{20 \theta^{3} \varphi^{3}}{3}
- \frac{35 \theta^{2} \varphi^{4}}{3}
 + 8 \theta \varphi^{5}
- 2 \varphi^{6}+ O(\varphi^2\cdot (\theta,\varphi)^5), \label{eq:G_minus_expansion}\\
K^-(\theta,\varphi) &=2 \theta^{2} \varphi^{2}
- 2 \theta \varphi^{3}
 + 4 \theta^{3} \varphi^{3}
- 4 \theta^{2} \varphi^{4}
 + \frac{2 \varphi^{6}}{3}
+ O(\varphi^2\cdot (\theta,\varphi)^5). \label{eq:K_minus_expansion}
\end{align}

\subsubsection{Expansions for $h_t$ in $\frac{1}{t} < \theta$}\label{sec:right_expansions}
 We choose the order of the expansion so that the remainder can be dominated by at least one term in the expansion.
For $\theta > \frac{1}{t}$, we have
\begin{align}
H^+(\theta,\varphi) &=\pi
- \varphi
 + \frac{\pi \theta^{2}}{6}
+ O((\theta,\varphi)^3), \label{eq:H_plus_expansion}\\
H_1^+(\theta,\varphi) &= - \pi \theta
- \theta \varphi
 + \varphi^{2} + O((\theta,\varphi)^3), \\
H_2^+(\theta,\varphi) &= 2 \pi \theta^{2}
 + 2 \theta \varphi^{2}
- 2 \varphi^{3}
 + \theta^{4} \varphi
- 9 \theta^{3} \varphi^{2}
 + \frac{53 \theta^{2} \varphi^{3}}{3}
- 14 \theta \varphi^{4}
 + 4 \varphi^{5}
 + O((\theta,\varphi)^6), \label{eq:H_2_plus_expansion}\\\
G^+(\theta,\varphi) &=
\pi^{2} \theta^{2}
 + 2 \pi \theta^{2} \varphi
- 2 \pi \theta \varphi^{2}
 + \frac{2 \pi^{2} \theta^{4}}{3}
 + \theta^{2} \varphi^{2}
- 2 \theta \varphi^{3}
 + \varphi^{4}
+ O((\theta,\varphi)^5), \label{eq:G_plus_expansion}\\
K^+(\theta,\varphi) &=6 \pi \theta^{2} \varphi
- 6 \pi \theta \varphi^{2}
 + 2 \pi \varphi^{3}
+O(\varphi \cdot (\theta,\varphi)^2). \label{eq:K_plus_expansion}
\end{align}
In the final expansion, we have a remainder of the form $O(\varphi \cdot(\theta,\varphi)^4)$. This is because $g(\theta) = \csc(\theta)$ satisfies $g^2 + 2(g')^2 -2g'' g = 0$, and $h_t(\theta) - g(\theta) = H^-(\theta,\varphi)= O(\varphi)$ uniformly in $\theta$.

\subsection{Estimates on $\gamma_t$ and $m_t$}\label{sec:estimates}
We fix $\delta > 0$ to be a small number, and take $t > \frac{1}{\delta}$. We decompose $\mathbb R/(\pi\mathbb Z)$ into four regions
\begin{align}
J_{bulk}&= [-\frac{\pi}{2}, -\delta] \cup [\delta, \frac{\pi}{2}],\\
J_{left} &= [-\delta^2, \frac{1}{t} - \frac{1}{\delta t^2}],\\
J_{critical} &= [ \frac{1}{t} - \frac{1}{\delta t^2}, \frac{1}{t} + \frac{1}{\delta t^2}],\\
J_{right} &= [\frac{1}{t} + \frac{1}{\delta t^2}, \delta].
\end{align}
In the subsections below, we provide upper bounds on the following quantities in each of these subintervals:
\begin{align}
m_t^z(\theta), \quad (m_t^z(\theta))', \quad m_t^y(\theta), \quad (m_t^y(\theta))',\quad  \frac{1}{|\gamma'_t(\theta)|}, \quad \frac{|\gamma''_t(\theta)|}{|\gamma'_t(\theta)|}, \quad \frac{1}{|W'_t(\theta)|}.
\end{align}
We note that for $\theta \in J_{left}\cup J_{right}$, we have $|\theta | < \delta$ and $|\theta -\frac{1}{t}| > \frac{1}{\delta t^2}$. Hence, 
\[
|\varphi| \leq |\theta | + \frac{1}{t} + \frac{1}{t^2 |\theta -\frac{1}{t}|} \leq 3\delta.
\]
Thus, by choosing $\delta$ sufficiently small, we utilize Lemma \ref{lemma:analytic} and the expansions to bound the quantities above in $J_{left}\cup J_{right}$ using only the lowest order terms. This is carried out in subsections \ref{sec:left_estimates} and \ref{sec:right_estimates}.

\subsubsection{Bulk region}

In this section, we record a basic estimate for the phase and multiplier functions away from the critical zone.

\begin{lemma}\label{lemma:gamma_t_bulk_region}
For all $\theta \in J_{bulk}$, we have
\begin{align}
|m_t^z(\theta)|+ | (m_t^z(\theta))'|+ |m_t^y(\theta)| + |(m_t^y(\theta))'|&\lesssim \frac{1}{t^2}, \label{eq:m_bulk_bds}\\
 \frac{1}{|\gamma'_t(\theta)|} + \frac  {|\gamma''_t(\theta)|}{|\gamma'(\theta)|}+  \frac{1}{|W'_t(\theta)|} \lesssim 1. \label{eq:gamma_bulk_bds}
\end{align}
\end{lemma}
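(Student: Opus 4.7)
The essential observation is that in $J_{bulk}$ the angle $\theta$ is bounded away from both $0$ and the critical ray $\theta=1/t$, so every quantity of interest is uniformly comparable to a fixed nonzero ($t$-independent) limit. I would begin with the elementary estimates $|\sin\theta|\ge\sin\delta$ and $|\cos\theta|,|\cot\theta|,|\csc\theta|\lesssim 1$ valid on $J_{bulk}$. Since $t>1/\delta$, one has $|t\sin\theta|\gtrsim t\delta\gg 1$, so
\[
\sin^{2}\theta+(\cos\theta-t\sin\theta)^{2}\ge (|t\sin\theta|-|\cos\theta|)^{2}\gtrsim t^{2},
\]
and the numerators $\sin\theta,\cos\theta$ are bounded by $1$, giving $|m_t^z|+|m_t^y|\lesssim t^{-2}$. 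The same bound for the derivatives follows directly from the quotient rule: the numerator derivative is $O(1)$, the denominator is $\sim t^{2}$, and its $\theta$-derivative is at most $O(t^{2})$, so $|(m_t^z)'|+|(m_t^y)'|\lesssim t^{-2}$. This establishes \eqref{eq:m_bulk_bds}.

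For \eqref{eq:gamma_bulk_bds} I would use the formulas \eqref{eq:h_in_terms_of_psi}--\eqref{eq:hprimeprime_in_terms_of_psi} and note that on $J_{bulk}$ the argument $\zeta=\cot\theta$ ranges over the compact set $K:=\cot(J_{bulk})$. A direct inspection of
\[
\psi_t(\zeta)=\arctan(t-\zeta)+\arctan(\zeta)
\]
yields, uniformly on $K$, the expansions $\psi_t=\tfrac{\pi}{2}+\arctan(\zeta)+O(t^{-1})$, $\psi_t'=\tfrac{1}{1+\zeta^{2}}+O(t^{-3})$, and $\psi_t''=-\tfrac{2\zeta}{(1+\zeta^{2})^{2}}+O(t^{-3})$. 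Consequently $h_t,h_t',h_t''$, and hence $\gamma_t',\gamma_t''$, converge uniformly on $J_{bulk}$ to finite limits with $O(t^{-1})$ errors, which immediately yields the uniform upper bound on $|\gamma_t''|$.

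The one delicate point is the uniform \emph{lower} bound on $|\gamma_t'|$ and $|W_t'|$ on $J_{bulk}$. Using $|\gamma_t'|^{2}=h_t^{2}+(h_t')^{2}$ and the identity (from the proof of Proposition \ref{prop:W_structure})
\[
|W_t'(\theta)|\,|\gamma_t'(\theta)|^{2}=\csc^{6}\theta\,\bigl(2(\psi_t'(\cot\theta))^{2}-\psi_t(\cot\theta)\psi_t''(\cot\theta)\bigr),
\]
the problem reduces to showing strict positivity of certain continuous functions on $J_{bulk}$, uniformly in $t$. For the first, observe that $h_\infty(\theta)=\csc\theta\bigl(\tfrac{\pi}{2}+\arctan(\cot\theta)\bigr)$ satisfies $\tfrac{\pi}{2}+\arctan(\cot\theta)\in(0,\pi)$ on $J_{bulk}$, so $|h_\infty|\ge c(\delta)>0$ by compactness. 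For the second, Lemma \ref{lemma:psi_properties} yields $2(\psi_t')^{2}-\psi_t\psi_t''>0$ pointwise for every finite $t$; a direct computation also shows the $t=\infty$ expression $2/(1+\zeta^{2})^{2}+2\zeta(\tfrac{\pi}{2}+\arctan\zeta)/(1+\zeta^{2})^{2}$ is strictly positive on $\mathbb R$ (and in particular on $K$). By joint continuity in $(\zeta,t)\in K\times[1/\delta,\infty]$ and compactness, this expression is bounded below by a positive constant uniformly in $t$. Transferring both lower bounds via the $O(t^{-1})$ uniform convergence (enlarging the threshold $1/\delta$ if necessary) gives the required uniform lower bounds on $|\gamma_t'|$ and $|W_t'|$.

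The main (mild) obstacle is precisely this last step: verifying strict positivity of the two limiting quantities uniformly on the compact set $J_{bulk}$, which requires knowing Lemma \ref{lemma:psi_properties} extends to the limit $t=\infty$. Once this is in hand, the rest of the proof is an immediate consequence of the quotient rule, compactness, and $O(t^{-1})$ uniform convergence.
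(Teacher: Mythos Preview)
Your proof is correct and follows the same strategy as the paper: for \eqref{eq:m_bulk_bds}, direct computation using $|\cos\theta - t\sin\theta|\gtrsim t$ on $J_{bulk}$; for \eqref{eq:gamma_bulk_bds}, uniform $C^k$ convergence of $h_t$ (equivalently $\psi_t$) to its $t=\infty$ limit on the compact set $J_{bulk}$, reducing the desired bounds to a verification for the limiting phase $h_\infty$. The paper is terser---it simply writes $h_\infty(\theta)=\frac{\pi-\theta}{\sin\theta}$ and declares the check a ``simple calculation''---whereas you spell out the lower bounds on $|\gamma_t'|$ and $|W_t'|$ via compactness in $(\zeta,t)\in K\times[1/\delta,\infty]$; one minor slip is that the error in $\psi_t'$ is $O(t^{-2})$ rather than $O(t^{-3})$, but this is immaterial to the argument.
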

\begin{proof}
The inequality \eqref{eq:m_bulk_bds} follows from direct calculation and the fact that $|t \sin(\theta) - \cos(\theta)| \gtrsim t$ in $J_{bulk}$.

On the other hand, for all $\theta \not \equiv 0 \mod \pi$, we have
\[
\lim_{t\to\infty} h_t(\theta) = h_{\infty}(\theta) := \frac{\pi - \theta}{\sin(\theta)}.
\]
Taking any $k \in \mathbb N_0$ we see that $\lim_{t \to \infty} \|h_t - h_\infty\|_{C^k(J_{bulk})} =0$. Thus, it suffices to  check that \eqref{eq:gamma_bulk_bds} holds for $h_\infty$. This is a simple calculation.
\end{proof} 

\subsubsection{$m_t$ in the left and right regions}

We can estimate the $m_t$ functions in the left and right regions together.

\begin{lemma}\label{lemma:m_left_right_bds}
Let $\theta \in J_{left} \cup J_{right}$.  Then, 
\begin{align}
|m_t^z(\theta)| & \lesssim \frac{|\theta|}{t^2|\theta - \frac{1}{t}|^2},\\
 | (m_t^z(\theta))'| &\lesssim \frac{1}{t^2|\theta - \frac{1}{t}|^2}\cdot (1 + t|\theta|),\\
  |m_t^y(\theta)|& \lesssim\frac{1}{t^2|\theta - \frac{1}{t}|^2},\\
   |(m_t^y(\theta))'|&\lesssim\frac{1}{t^2|\theta - \frac{1}{t}|^3}.
\end{align}

\end{lemma}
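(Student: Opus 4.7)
The plan is a direct computation organized around one key reformulation of the denominator $D(\theta) := \sin^2\theta + (\cos\theta - t\sin\theta)^2$. The first step will be to invoke the identity $\cos\theta - t\sin\theta = -\sqrt{1+t^2}\sin(\theta - \theta_*)$, where $\theta_* := \arctan(1/t)$ is the unique nearby zero, so that
\[
D(\theta) = \sin^2\theta + (1+t^2)\sin^2(\theta - \theta_*).
\]
This sum-of-squares form displays $D$ as manifestly nonnegative with vanishing locus $\theta = \theta_*$, and the Taylor expansion of $\arctan$ gives $\theta_* = 1/t + O(1/t^3)$.

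The core estimate to establish is the uniform lower bound $D(\theta) \gtrsim t^2(\theta - 1/t)^2$ on $J_{left}\cup J_{right}$. Since $|\theta - 1/t| \geq 1/(\delta t^2) \gg 1/t^3$ for $t$ large, the discrepancy $\theta_* - 1/t = O(1/t^3)$ is negligible against $\theta - 1/t$, yielding $|\theta - \theta_*| \sim |\theta - 1/t|$. Since $|\theta - \theta_*| \leq 2\delta$ stays small, the elementary inequality $\sin^2 x \gtrsim x^2$ then gives
\[
D(\theta) \geq (1+t^2)\sin^2(\theta - \theta_*) \gtrsim t^2(\theta - 1/t)^2.
\]
The pointwise bounds on $m_t^z$ and $m_t^y$ follow immediately by combining this with $|\sin\theta| \leq |\theta|$ and $|\cos\theta| \leq 1$.

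For the derivative bounds, differentiating gives $D'(\theta) = \sin(2\theta) + (1+t^2)\sin(2(\theta - \theta_*))$, and the same reasoning produces $|D'(\theta)| \lesssim |\theta| + t^2|\theta - 1/t| \lesssim t^2|\theta - 1/t|$ (the second summand dominates since $t^2|\theta - 1/t| \geq 1/\delta$ while $|\theta| \leq \delta$). Applying the quotient rule to $m_t^y = \cos\theta/D$ yields
\[
|(m_t^y)'(\theta)| \leq \frac{|\sin\theta|}{D} + \frac{|\cos\theta|\,|D'|}{D^2} \lesssim \frac{|\theta|}{t^2(\theta - 1/t)^2} + \frac{1}{t^2|\theta - 1/t|^3},
\]
with the first term absorbed into the second using $|\theta|\,|\theta - 1/t| \leq \delta^2$. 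For $(m_t^z)'$, a short algebraic rearrangement collapses the numerator $\cos\theta\cdot D - \sin\theta\cdot D'$ of the quotient rule to $u(1 + t\sin\theta\cos\theta) - t\sin^3\theta$, where $u := \cos\theta - t\sin\theta$ satisfies $|u| = \sqrt{1+t^2}|\sin(\theta - \theta_*)| \sim t|\theta - 1/t|$. This factored form exhibits the cancellation between the two summands produced by the quotient rule and allows one to conclude.

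The delicate point will be the careful control of the offset $\theta_* - 1/t = O(1/t^3)$ against $|\theta - 1/t| \geq 1/(\delta t^2)$. This comparison is what underpins both the lower bound on $D$ and the dominance of the $t^2|\theta - 1/t|$ summand in $D'$; once it is in place, the rest reduces to routine bookkeeping with the quotient rule inside $J_{left}\cup J_{right}$.
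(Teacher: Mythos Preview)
Your approach is correct and essentially the same as the paper's: both reduce everything to the key estimate $D(\theta)\sim t^2(\theta-\tfrac1t)^2$ on $J_{left}\cup J_{right}$, after which the bounds on $m_t^z,m_t^y$ and their derivatives are routine quotient-rule computations. The paper obtains this by a direct Taylor expansion $D(\theta)=t^2(\theta-\tfrac1t)^2+O(t\theta^3+\theta^2)$ followed by a two-case split on $|\theta|\lessgtr 2/t$; you instead use the exact identity $\cos\theta-t\sin\theta=-\sqrt{1+t^2}\sin(\theta-\theta_*)$ with $\theta_*=\arctan(1/t)=1/t+O(1/t^3)$, which makes the vanishing locus explicit and avoids the case split. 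The paper then simply declares the derivative bounds ``straightforward calculations,'' while you spell them out, including the nice factoring $\cos\theta\cdot D-\sin\theta\cdot D'=u(1+t\sin\theta\cos\theta)-t\sin^3\theta$.

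One remark on the conclusion for $(m_t^z)'$: your factored numerator together with $D\gtrsim t^2(\theta-\tfrac1t)^2$ actually delivers $|(m_t^z)'|\lesssim (1+t|\theta|)/(t^3|\theta-\tfrac1t|^3)$, which is the bound recorded in the paper's summary table and used downstream. The displayed second inequality in the lemma (with $t^2|\theta-\tfrac1t|^2$ in the denominator) is a misprint --- at $\theta=\tfrac1t+\tfrac{1}{\delta t^2}$ one has $|(m_t^z)'|\sim\delta^3 t^3$, exceeding that stated bound by a factor $\sim\delta t$ --- so your argument in fact proves the correct version.
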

\begin{proof}
Take $\theta \in J_{left} \cup J_{right}$.
Observe that
\[
\sin(\theta)^2  + (t\sin(\theta) -\cos(\theta))^2  =t^2 \left(\theta - \frac{1}{t}\right)^2 + O(t\theta^3 + \theta^2).
\]
Now, either $|\theta| < \frac{2}{t}$, in which case the remainder is $O(\frac{1}{t^2})$, while $t|\theta - \frac{1}{t}|  > \frac{1}{\delta t}$, so the above is comparable to $t^2|\theta - \frac{1}{t}|^2$. On the other hand, if $|\theta| > \frac{2}{t}$, then $|\theta - \frac{1}{t}| \sim \theta$, and the same conclusion applies.
 Thus, 
\[
\sin(\theta)^2  + (t\sin(\theta) -\cos(\theta))^2 \sim  t^2\left(\theta - \frac{1}{t}\right)^2.
\]
The rest of the proof follows from  straightforward calculations.
\end{proof}
\subsubsection{$\gamma_t$ in the left region} \label{sec:left_estimates}

We utilize the expansions of Section \ref{sec:left_expansions} to get the following estimates. 
\begin{lemma}\label{lemma:gamma_t_left_region}
For all $\theta \in J_{left}$, we have
\begin{align}
 \frac{1}{|\gamma'_t(\theta)|} &\sim \frac{ t\left (\theta - \frac{1}{t} \right)^2}{1 + t\theta^2},
 \label{eq:gamma_left_bd1} \\
 \frac{|\gamma''_t(\theta)|}{|\gamma'_t(\theta)|} &\lesssim \frac{1}{ | \theta - \frac{1}{t}|}\cdot  \frac{1+t|\theta|^3}{1 + t \theta^2},
\label{eq:gamma_left_bd2}\\
  \frac{1}{|W'_t(\theta)|} &\lesssim  \frac{1}{t|\theta - \frac{1}{t}|} \cdot \frac{(1 + t \theta^2)^2}{1 + t|\theta|^3}.  \label{eq:gamma_left_bd3}
\end{align}
\end{lemma}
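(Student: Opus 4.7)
The plan is to leverage the multi-scale representation $h_t(\theta) = H^-(\theta,\varphi)/\theta$ furnished by Lemma~\ref{lemma:analytic}, together with the Taylor expansions of $G^-$, $H_2^-$, and $K^-$ in $(\theta,\varphi)$ given in Section~\ref{sec:left_expansions}. Throughout $J_{left}$ one has $|\theta| \le \delta^2$ and, as noted just before this lemma, $|\varphi| \le 3\delta$, so these expansions are valid with small remainders uniformly in $\theta$. Two algebraic identities will be central,
\begin{align}
\varphi = \frac{\theta^2}{\theta - \frac{1}{t}}, \qquad \theta - \varphi = -\frac{\theta}{t(\theta - \frac{1}{t})},
\end{align}
together with the structural equalities $|\gamma_t'(\theta)|^2 = G^-(\theta,\varphi)/\theta^6$ and $|W_t'(\theta)| = K^-(\theta,\varphi)/G^-(\theta,\varphi)$ (the latter via \eqref{Wpd} and the sign $\mathrm{Im}[\overline{\gamma_t'}\gamma_t''] < 0$ established in Proposition~\ref{prop:W_structure}).

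For \eqref{eq:gamma_left_bd1}, I substitute the above identities into the leading pair of terms in \eqref{eq:G_minus_expansion}, obtaining $\varphi^2(\theta-\varphi)^2 = \theta^6/[t^2(\theta - \tfrac{1}{t})^4]$. I will then split into cases according to whether $|\theta| \lesssim 1/\sqrt{t}$ or $|\theta| \gtrsim 1/\sqrt{t}$. In the first regime this pair dominates $G^-$, while in the second regime the quartet $\tfrac{20}{3}\theta^3\varphi^3 - \tfrac{35}{3}\theta^2\varphi^4 + 8\theta\varphi^5 - 2\varphi^6$ sums to an amount of size $\sim \theta^2 \varphi^4 = \theta^{10}/(\theta - \tfrac{1}{t})^4$. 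Adding the two contributions (and checking that the $O(\varphi^2(\theta,\varphi)^5)$ remainder is strictly subdominant in both regimes) yields $G^- \sim \theta^6 (1 + t\theta^2)^2/[t^2 (\theta - \tfrac{1}{t})^4]$, which rearranges into the claimed two-sided bound.

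For \eqref{eq:gamma_left_bd2}, the point of departure is the identity $|\gamma_t''|^2 = (h_t - h_t'')^2 + 4(h_t')^2$ obtained by expanding $\gamma_t'' = e^{-i\theta}(h_t - h_t'' + 2i h_t')$. Writing $h_t - h_t'' = (\theta^4 H^- - H_2^-)/\theta^5$ and inserting \eqref{eq:H_2_minus_expansion}, the numerator has leading behavior $-2\varphi^2(\theta - \varphi)$ when $t|\theta|^3 \lesssim 1$ and $-2\theta^4\varphi$ when $t|\theta|^3 \gtrsim 1$; dividing by $|\gamma_t'|$ from the first step produces the claimed upper bound. For \eqref{eq:gamma_left_bd3} the same philosophy applies to $K^-$: the leading pair $2\theta^2\varphi^2 - 2\theta\varphi^3 = 2\theta\varphi^2(\theta - \varphi)$ of \eqref{eq:K_minus_expansion} contributes $\sim \theta^6/[t|\theta - \tfrac{1}{t}|^3]$, while the term $\tfrac{2}{3}\varphi^6 = \tfrac{2}{3}\theta^{12}/(\theta - \tfrac{1}{t})^6$ takes over once $t|\theta|^3 \gtrsim 1$, so that in total $K^- \sim \theta^6(1 + t|\theta|^3)/[t|\theta - \tfrac{1}{t}|^3]$. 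The ratio $G^-/K^-$ then yields \eqref{eq:gamma_left_bd3}.

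The main obstacle will be the careful bookkeeping needed to show that the exhibited leading terms truly dominate both the remaining polynomial terms and the $O(\varphi^2(\theta,\varphi)^5)$ remainder, uniformly across the transitions between the small-$|\theta|$ ($|\theta| \lesssim 1/\sqrt{t}$), medium-$|\theta|$ ($1/\sqrt{t} \lesssim |\theta| \lesssim 1/t^{1/3}$), and large-$|\theta|$ regimes. Establishing the sharp two-sided bound $\sim$ in \eqref{eq:gamma_left_bd1} (rather than a one-sided estimate) is the most delicate point, since one must rule out accidental cancellation of the intended leading behavior of $G^-$ by a sub-leading contribution in any of these regimes.
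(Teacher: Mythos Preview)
Your proposal is correct and closely parallels the paper's proof: both rely on the multi-scale representations $|\gamma_t'|^2 = G^-(\theta,\varphi)/\theta^6$, $h_t'' = H_2^-/\theta^5$, $|W_t'| = K^-/G^-$, the identities $\varphi = \theta^2/(\theta - \tfrac{1}{t})$ and $\theta-\varphi = -\theta/[t(\theta-\tfrac{1}{t})]$, and the expansions of Section~\ref{sec:left_expansions}.

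The one place where the paper proceeds differently is exactly the point you flag as most delicate. Rather than splitting into regimes $|\theta| \lessgtr t^{-1/2}$ (and $t|\theta|^3 \lessgtr 1$) and checking that one block of terms dominates another, the paper substitutes $\theta = (\theta-\varphi)+\varphi$ directly into the quartet $\tfrac{20}{3}\theta^3\varphi^3 - \tfrac{35}{3}\theta^2\varphi^4 + 8\theta\varphi^5 - 2\varphi^6$. Because the coefficients satisfy $\tfrac{20}{3}-\tfrac{35}{3}+8-2 = 1$, this yields $\varphi^6$ plus terms carrying at least one factor of $(\theta-\varphi)$, so that
\[
G^-(\theta,\varphi) = \varphi^2(\theta-\varphi)^2 + \varphi^6 + O\bigl(\varphi^3(\theta-\varphi)^3 + \varphi^7\bigr),
\]
a sum of two manifestly nonnegative leading pieces with a strictly subdominant remainder. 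The same device is applied to $H_2^-$ and to $K^-$ (for the latter one also uses that $\theta\varphi^2(\theta-\varphi) > 0$ on $J_{left}$, since $\theta - \tfrac{1}{t} < 0$ there). This rewriting avoids the crossover bookkeeping altogether and directly resolves the cancellation concern you raise; in your case split, the pair $\varphi^2(\theta-\varphi)^2$ and the quartet are of comparable size near $|\theta| \sim t^{-1/2}$ (where $\varphi \sim \theta$ and all four quartet terms are $\sim \theta^6$), so you would in any event need this coefficient identity --- or an equivalent direct computation --- to rule out cancellation and close the two-sided bound there.
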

\begin{proof}  Throughout this proof, we shall take $\delta> 0$ small enough so  the expansions from Section \ref{sec:expansions} in $\theta,\varphi \in (-2\delta,2\delta)$ can be controlled by the leading order terms.

\textit{Step 1:} We first prove \eqref{eq:gamma_left_bd1}.
We rewrite \eqref{eq:G_minus_expansion} as
\begin{align}
G^-(\theta,\varphi) &=  \varphi^{2} ( \theta - \varphi)^2
 + \frac{20 \theta^{3} \varphi^{3}}{3}
- \frac{35 \theta^{2} \varphi^{4}}{3}
 + 8 \theta \varphi^{5}
- 2 \varphi^{6}+ O(\varphi^2\cdot (\theta,\varphi)^5).\\
&= \varphi^2 (\theta - \varphi)^2 + \varphi^6 + O(\varphi^7 + \varphi^3 (\theta - \varphi)^3).
\end{align}
We have used the fact that $\frac{20}{3} - \frac{35}{3}  + 8  - 2 = 1$, so when we expand $\theta = (\theta - \varphi) + \varphi$, we are left with the above. 

Applying this to $\gamma'(\theta)$, we get
\begin{align}
|\gamma_t'(\theta)|^2 = \theta^{-6} G^-(\theta,\varphi)  \sim \frac{\varphi^2(\theta - \varphi)^2}{\theta^6} + \frac{ \varphi^6}{\theta^6}.
\end{align}
Next,  we rewrite $\theta - \varphi = \frac{\theta}{t( \theta -\frac{1}{t})}$ to get 
\begin{align}\label{eq:gamma_prime_squared_left_bd}
|\gamma_t'(\theta)|^2 \sim \frac{1}{t^2\left(\theta -\frac{1}{t}  \right)^4} + \frac{\theta^6}{\left(\theta - \frac{1}{t} \right)^6}\sim  \frac{(1 + t \theta^2)^2}{t^2\left(\theta -\frac{1}{t}  \right)^4}.
\end{align}
The estimate \eqref{eq:gamma_left_bd1} follows.

\textit{Step 2:} We now prove \eqref{eq:gamma_left_bd2}.
Using \eqref{eq:H_2_minus_expansion}, and expanding $\theta = (\theta - \varphi) + \varphi$, we see
\begin{align}
|\theta|^5|\gamma''_t(\theta)| &= 2|\varphi|^2 | \theta -\varphi| + O(|\varphi|^2 ( |\theta - \varphi|^3  +|\varphi|^3)) \sim \varphi^2 |\theta -\varphi| + |\varphi|^5\\
\end{align}
Hence,
\[
|\gamma''_t(\theta)| \lesssim  \frac{|\varphi|^2 | \theta -\varphi|}{\theta^5} + \frac{|\varphi|^5}{\theta^5} = \frac{1}{t|\theta - \frac{1}{t}|^3} + \frac{|\theta|^5}{|\theta - \frac{1}{t}|^5} \sim \frac{1 + t \theta^3}{t|\theta - \frac{1}{t}|^3}.
\]
Combining with \eqref{eq:gamma_prime_squared_left_bd}, we deduce \eqref{eq:gamma_left_bd2}.

\textit{Step 3:} Next, we consider 
$1/W_t'(\theta)$.
From  \eqref{eq:K_minus_expansion}, we can rewrite
 \[
K^-(\theta,\varphi) =2  \theta  \varphi(\theta - \varphi)  -4\theta^3 \varphi^2(\theta - \varphi) + \frac{2\varphi^6}{3} + O(\varphi^2 \cdot (\theta ,\varphi)^5)
\]
When we evaluate $\varphi = \varphi(\theta)$, we see that 
the leading order term is nonnegative when expressed in terms of $\theta$ and $t$:
\[
\theta \varphi^2 (\theta - \varphi) = \frac{\theta^6}{t (\theta - \frac{1}{t})^3} > 0
\]
Therefore, we only need to consider the case when $\theta \varphi (\theta - \varphi)  > 0$. Using this, we have
\begin{align}
K^-(\theta, \varphi) \gtrsim 2  |\theta  \varphi(\theta - \varphi)  | + \frac{2\varphi^6}{3} - C\varphi^2\theta^5.
\end{align}
But 
\[
\varphi^2\theta^5  \lesssim \varphi^2|\theta|(\theta- \varphi)^4 +  \theta \varphi^6 \leq \delta ( |\theta  \varphi(\theta - \varphi)  | + \varphi^6).
\]
Hence,
\begin{align}
K^-(\theta, \varphi) \lesssim  2  |\theta  \varphi(\theta - \varphi)  | + \frac{2\varphi^6}{3}.
\end{align}
Using this, we find
\begin{align}
-\mathrm{Im}[\overline\gamma '_t(\theta)\gamma''_t(\theta)] &=\theta^{-6} K^-(\theta,\varphi)\\
&\sim \frac{|\varphi|^2 | \theta- \varphi|}{|\theta|^5}   + \frac{|\varphi|^2}{|\theta|^2}\\
& = \frac{1}{t |\theta - \frac{1}{t}|^3} + \frac{\theta^2}{(\theta - \frac{1}{t})^2} \\
&= \frac{1}{t |\theta- \frac{1}{t}|^3} \left(1  +t \theta^2 | \theta - \frac{1}{t}| \right) \\
&\sim  \frac{1 + t|\theta|^3}{t |\theta- \frac{1}{t}|^3}.
\end{align}
Combining the above with \eqref{eq:gamma_prime_squared_left_bd}, we have
\[
-\frac{1}{W'_t(\theta)} =- \frac{|\gamma_t'(\theta)|^2}{\mathrm{Im}[\overline \gamma_t'(\theta)\gamma''_t(\theta)]} \sim\frac{1}{t|\theta - \frac{1}{t}|} \cdot \frac{(1+ t\theta)^2}{1 + t|\theta|^3}, 
\]
which gives us \eqref{eq:gamma_left_bd3}.
 \end{proof}

\subsubsection{$\gamma_t$ in the right region}\label{sec:right_estimates}

Compared to $J_{left}$, the asymptotic analysis of $|\gamma'|$ in $J_{right}$ is complicated by the fact that  $h_t$ has a critical point $\theta _t^* \in J_{right}$. To leading order,  $\theta_t^*$ behaves like
\begin{align}
\theta_t^*= \frac{1}{t} + \frac{1}{\sqrt \pi t^{\frac{3}{2}}} + O\left(\frac{1}{t^2}\right).
\end{align}
For $\theta$ near $\theta_t^*$, the function  $\frac{1}{|\gamma'_t(\theta)|}$ decays at a slower rate  as $t \to \infty$ compared to the surrounding region. To visualize this, we plot a re-scaled version of $\frac{1}{|\gamma'_t(\theta)|}$ in Figure \ref{fig:gamma_p_near_critical_pt} below.
\begin{figure}[H]
\center
\includegraphics[width=.8\textwidth]{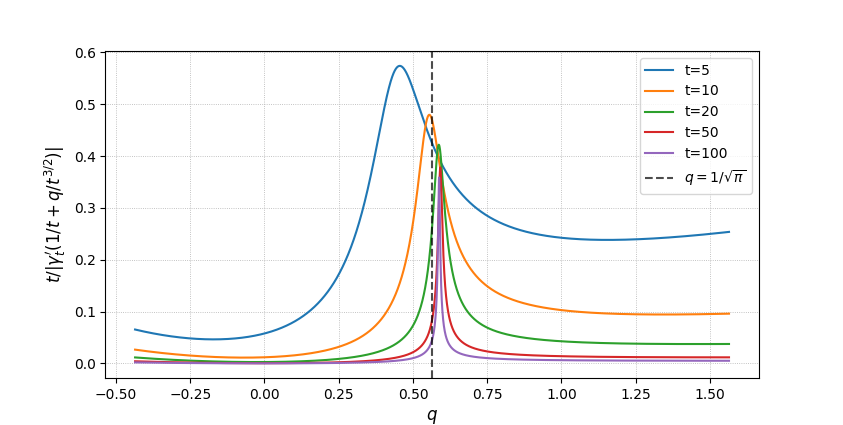}
\caption{We plot $\dfrac{t}{|\gamma'_t(\frac{1}{t} + \frac{q}{t^{\frac{3}{2}}})|}$ for various values of $t$ near $q = \frac{1}{\sqrt{\pi}}$.\label{fig:gamma_p_near_critical_pt} } 
\end{figure}

 With this in mind, we decompose our interval further:
\begin{align}
J_{right} &= \left[\frac{1}{t} + \frac{1}{\delta t^2}, \frac{1}{t} - \frac{1}{\delta t^\frac{3}{2}}  \right]\cup  \left [\frac{1}{t} - \frac{1}{\delta t^\frac{3}{2}}    ,  \frac{1}{t} +\frac{1}{\delta t^\frac{3}{2}} \right] \\
&\quad  \cup  \left[\frac{1}{t} + \frac{1}{\delta t^\frac{3}{2}},\frac{1}{t} + \delta \right]\\
&=:  J_{inner\ right} \cup J_{critical\ right} \cup J_{outer\ right}. 
\end{align}
As before, we take  $\delta > 0$ sufficiently small, and then $t > \frac{2}{\delta}$. In terms of $\varphi$, observe that by taking $\delta$ small enough, 
\begin{align}
\theta \in J_{inner\ right} \quad & \Rightarrow\quad \pi \theta \leq  \frac{\delta^2}{2}\varphi^2, \\
\theta \in J_{critical\ right} \quad &\Rightarrow \quad \pi\theta \in \left[  \frac{\delta^2}{2} \varphi^2,   \frac{2}{\delta^2}\varphi^2\right],\\
\theta \in J_{outer\ right} \quad &\Rightarrow \quad \pi\theta \geq \frac{2}{\delta^2}\varphi^2.
\end{align}
In particular, for $\theta \in J_{critical\ right}$, we have $\theta \sim \frac{1}{t}$ and $\varphi \sim \frac{1}{\sqrt{t}}$.

We shall use these inequalities in the Lemma below.

\begin{lemma}\label{lemma:gamma_t_right_region}
For all $\theta \in J_{right}$, we have
\begin{align}
 \frac{1}{|\gamma'_t(\theta)|} &\lesssim \begin{dcases} t(\theta -\frac{1}{t})^2& \quad \theta \in J_{inner\ right}, \\
\frac{1}{t}, &\quad \theta \in J_{critical\ right} ,\\
 \theta^2, &\quad \theta \in J_{outer\ right},
 \end{dcases}
 \label{eq:gamma_right_bd1}
 \end{align}
 We note that the upper bounds on $J_{inner \ right}$ and $J_{outer\ right}$ are also lower bounds.
For the other quantities, we have
 \begin{align}
 \frac{|\gamma''_t(\theta)|}{|\gamma'_t(\theta)|} &\lesssim  \begin{dcases} \frac{1}{ (\theta - \frac{1}{t})}& \quad \theta \in J_{inner\ right}, \\
t, &\quad \theta \in J_{critical\ right} ,\\
\frac{1}{\theta}, &\quad \theta \in J_{outer\ right},
 \end{dcases}
\label{eq:gamma_right_bd2}
\end{align}
and
\begin{align}
  \frac{1}{|W'_t(\theta)|} &\lesssim \begin{dcases} \frac{1}{t^2 (\theta - \frac{1}{t})}& \quad \theta \in J_{inner\ right}, \\
\frac{1}{t^3}|\gamma_t'(\theta)|^2, &\quad \theta \in J_{critical\ right} ,\\
\left(\theta - \frac{1}{t}\right)^3 \cdot \frac{t^2}{\theta^2 (t^2\theta^2 + 1)}, &\quad \theta \in J_{outer\ right}.
 \end{dcases} \label{eq:gamma_right_bd3}
\end{align}
\end{lemma}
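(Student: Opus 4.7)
The plan is to mirror the strategy used in the proof of Lemma~\ref{lemma:gamma_t_left_region}, substituting the expansions from Section~\ref{sec:right_expansions} for those of Section~\ref{sec:left_expansions}. The essential new difficulty is that $h_t$ has a critical point $\theta_t^* \in J_{critical\ right}$ at which $h_t'$, hence $H_1^+$, vanishes; this forces a careful three-case analysis.

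First I will set up useful algebraic identities. Using the definition $G = \theta^4 H^2 + H_1^2$ together with $H_1^+ = -\pi\theta - \theta\varphi + \varphi^2 + O((\theta,\varphi)^3) = -(\pi\theta + \varphi(\theta-\varphi)) + O((\theta,\varphi)^3)$, the expansion \eqref{eq:G_plus_expansion} reorganizes as
\begin{align}
G^+(\theta,\varphi) = \bigl(\pi\theta + \varphi(\theta-\varphi)\bigr)^{2} + \tfrac{2\pi^{2}}{3}\theta^{4} + O\bigl((\theta,\varphi)^{5}\bigr),
\end{align}
while \eqref{eq:K_plus_expansion} factors as $K^+(\theta,\varphi)=2\pi\varphi(3\theta^{2}-3\theta\varphi+\varphi^{2}) + O(\varphi\cdot(\theta,\varphi)^{4})$, whose quadratic factor in $\varphi$ has negative discriminant $-3\theta^{2}$ and is therefore pointwise $\sim \theta^2+(\theta-\varphi)^2$. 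Throughout, I will substitute $\theta-\varphi = \theta/(t(\theta-1/t))$ to convert $\varphi$-bounds into $(\theta,t)$-bounds.

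Step 1 (bounds on $1/|\gamma_t'|$): In $J_{inner\ right}$ the defining inequality $\pi\theta\le\tfrac{\delta^{2}}{2}\varphi^{2}$ makes $\varphi(\theta-\varphi)$ dominate inside the square, so $G^+\sim\varphi^{4}$; substituting $\varphi^{4}/\theta^{6}\sim 1/(t^{2}(\theta-1/t)^{4})$ gives $1/|\gamma_t'|\sim t(\theta-1/t)^{2}$. In $J_{outer\ right}$ the opposite inequality $\pi\theta\ge\tfrac{2}{\delta^{2}}\varphi^{2}$ ensures $\pi^{2}\theta^{2}$ dominates $G^+$, and I read off $1/|\gamma_t'|\sim\theta^{2}$. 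The subtle case is $J_{critical\ right}$: there the $(\pi\theta+\varphi(\theta-\varphi))^{2}$ term can cancel, so I must rely on the floor $G^+\gtrsim\tfrac{2\pi^{2}}{3}\theta^{4}$, which yields $|\gamma_t'|^{2}\gtrsim 1/\theta^{2}\sim t^{2}$ and hence $1/|\gamma_t'|\lesssim 1/t$.

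Step 2 (bounds on $|\gamma_t''|/|\gamma_t'|$): Differentiating once more and using $|\gamma_t''|^{2}=(h_t-h_t'')^{2}+4(h_t')^{2}$, I insert \eqref{eq:H_2_plus_expansion}. In $J_{inner\ right}$, $|H_2^+|\sim\varphi^{3}$ (since the $\pi\theta^{2}$ contribution is swamped by $\varphi^{3}$ under $\pi\theta\ll\varphi^{2}$), so $|\gamma_t''|\sim\varphi^{3}/\theta^{5}$ and the ratio collapses to $\varphi/\theta^{2}=1/(\theta-1/t)$. In $J_{outer\ right}$, the $\pi\theta^{2}$ term dominates $H_2^+$, giving $|\gamma_t''|\sim 1/\theta^{3}$ and the ratio $\sim 1/\theta$. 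In $J_{critical\ right}$, both contributions are at the same scale, $|\gamma_t''|\lesssim t^{3}$ and $|\gamma_t'|\gtrsim t^{2}$ from Step~1, producing the bound $\lesssim t$.

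Step 3 (bounds on $1/|W_t'|$): Since $W_t'=-K^+/G^+$, one has $1/|W_t'|=G^+/K^+$. In $J_{inner\ right}$, $\theta\ll\varphi$ forces the quadratic factor in $K^+$ to $\sim\varphi^{2}$, so $K^+\sim 2\pi\varphi^{3}$; combined with $G^+\sim\varphi^{4}$, I get $1/|W_t'|\sim\varphi\sim 1/(t^{2}(\theta-1/t))$. In $J_{outer\ right}$, the quadratic factor is $\sim\theta^{2}$ (as $\pi\theta\gg\varphi^{2}$ implies $\theta\gg\varphi^{2}$ hence $\theta^{2}\gg\theta\varphi\gg\varphi^{2}\cdot$whatever), so $K^+\sim\theta^{2}\varphi$; the quotient $G^+/K^+$ then simplifies using $\varphi\sim\theta/(t(\theta-1/t))$ to the stated form $(\theta-1/t)^{3}t^{2}/(\theta^{2}(t^{2}\theta^{2}+1))$. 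In $J_{critical\ right}$, the lemma simply asserts $1/|W_t'|\lesssim |\gamma_t'|^{2}/t^{3}$, which I verify from $K^+\sim 2\pi\varphi^{3}\sim 1/t^{3/2}$ together with $\theta^{6}\sim 1/t^{6}$, giving $1/|W_t'|=G^+/K^+\sim \theta^{6}|\gamma_t'|^{2}/K^+\sim|\gamma_t'|^{2}/t^{3}$.

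The main obstacle is the critical subregion. There the cancellation $H_1^+(\theta_t^*) = 0$ annihilates the leading term of $G^+$, so every lower bound must be extracted from the subleading $\theta^{4}(H^+)^{2}$ contribution, while one must simultaneously verify that the $O((\theta,\varphi)^{5})$ remainder in the $G^+$ expansion and the $O(\varphi\cdot(\theta,\varphi)^{4})$ remainder in $K^+$ cannot conspire to spoil these estimates on the scale $\varphi\sim t^{-1/2}$, $\theta\sim t^{-1}$. This requires the extra two orders of accuracy kept in \eqref{eq:G_plus_expansion} and \eqref{eq:K_plus_expansion}, which is precisely why those expansions are recorded in Section~\ref{sec:right_expansions}.
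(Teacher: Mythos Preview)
Your overall strategy---three subregions, $G^+$/$K^+$ expansions---is the same as the paper's, and most of the argument is sound. But there is a genuine gap in Step~1 for $J_{critical\ right}$, and a smaller one in Step~3 for $J_{outer\ right}$.

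\textbf{The critical-right floor does not follow from the $G^+$ expansion.} In $J_{critical\ right}$ you have $\theta\sim t^{-1}$ and $\varphi\sim t^{-1/2}$, so the $O((\theta,\varphi)^5)$ remainder in your reorganized expansion $G^+=(\pi\theta+\varphi(\theta-\varphi))^2+\tfrac{2\pi^2}{3}\theta^4+O((\theta,\varphi)^5)$ is of size $\varphi^5\sim t^{-5/2}$, which swamps the floor $\theta^4\sim t^{-4}$. Hence you cannot conclude $G^+\gtrsim\theta^4$ from the truncated expansion, and the ``extra two orders of accuracy'' you invoke in the final paragraph are simply not enough here. The paper avoids this by abandoning the $G^+$ expansion entirely in this subregion: it uses the exact structural identity $G^+=\theta^4(H^+)^2+(H_1^+)^2\ge\theta^4(H^+)^2$ together with the zeroth-order fact $H^+(\theta,\varphi)=\pi+O((\theta,\varphi))\sim 1$, which immediately gives $|\gamma_t'|^2=\theta^{-6}G^+\ge\theta^{-2}(H^+)^2\gtrsim t^2$.

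\textbf{In $J_{outer\ right}$, the claim $K^+\sim\theta^2\varphi$ fails near the left endpoint.} For $\theta$ close to $\tfrac1t+\tfrac{1}{\delta t^{3/2}}$ one has $\theta\sim t^{-1}$ but $\varphi\sim t^{-1/2}\gg\theta$, so the quadratic factor $3\theta^2-3\theta\varphi+\varphi^2\sim\theta^2+\varphi^2$ is dominated by $\varphi^2$, not $\theta^2$. Using only the lower bound $K^+\gtrsim\theta^2\varphi$ (which is valid) yields $1/|W_t'|\lesssim 1/\varphi$, and this is strictly weaker than the stated bound in that range: at $\theta-\tfrac1t\sim t^{-3/2}$ you get $1/\varphi\sim t^{1/2}$ versus the required $\sim t^{-1/2}$. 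The paper keeps the full lower bound $K^+\gtrsim\varphi(\theta^2+\varphi^2)$ and combines it with $G^+\lesssim\theta^2$ (since $\varphi^4\lesssim\theta^2$ throughout $J_{outer\ right}$), after which the algebra $\theta^2/(\varphi(\theta^2+\varphi^2))=(\theta-\tfrac1t)^3/(\theta^2(\theta-\tfrac1t)^2+\theta^4)$ gives the stated form. (Also note your substitution formula ``$\varphi\sim\theta/(t(\theta-1/t))$'' is the formula for $\varphi-\theta$, not for $\varphi$.)

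Your Step~3 arithmetic for $J_{critical\ right}$ actually gives $\theta^6/K^+\sim t^{-6}/t^{-3/2}=t^{-9/2}$, not $t^{-3}$; since $t^{-9/2}\le t^{-3}$ this is harmless for the stated lemma.
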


\begin{remark}
Unlike every other bound, the upper bound for $\frac{1}{|W'_t(\theta)|}$ on $\theta \in J_{critical\ right}$ in case 2 of  $\eqref{eq:gamma_right_bd3}$ is stated in terms of $|\gamma_t'|^2$. While we could bound $\frac{1}{|W'_t(\theta)|} \lesssim \frac{1}{\sqrt{t}}$, the above bound is more useful, as it allows us to benefit from the relative smallness of $|\gamma_t'|$ at the critical point $\theta = \theta_t^*$.
\end{remark}

\begin{proof}  As with the proof of the previous lemma, throughout this proof, we shall take $\delta> 0$ small enough so  the expansions from Section \ref{sec:expansions} in $\theta,\varphi \in (-2\delta,2\delta)$ can be controlled by the leading order terms.

\textit{Step 1:} We consider $|\gamma'_t(\theta)|$. From \eqref{eq:G_plus_expansion}, we have
\begin{align}
G^+(\theta,\varphi) = (\pi \theta - \varphi)^2
 + 2 \pi \theta^{2} \varphi
 + \frac{2 \pi^{2} \theta^{4}}{3}
 + \theta^{2} \varphi^{2} 
- 2 \theta \varphi^{3} + O((\theta,\varphi)^5).
\end{align}
In $\theta \in J_{inner\ right}$, we have $\theta \leq \frac{\delta^2}{2} \varphi^2$. Thus, with $\delta$ small enough, we have $(\pi\theta -\varphi)^2 \sim \varphi^2$. Hence,
\[
G^+(\theta,\varphi) \sim \varphi^4 = \frac{\theta^8}{(\theta - \frac{1}{t})^4}.
\]
Dividing by $\theta^{-6}$ to get $|\gamma_t'(\theta)|$, and applying $\theta \sim \frac{1}{t}$ in this region, we recover the first case of \eqref{eq:gamma_right_bd1}.

Next, let us take $\theta \in J_{critical\ right}$.  We use the fact that
\[
|\gamma_t'(\theta)|^2 = \theta^{-2} H^+(\theta,\varphi) + \theta^6 H_1^+(\theta,\varphi) \geq \theta^{-2} H^+(\theta,\varphi).
\]
From \eqref{eq:H_plus_expansion}, 
\[
H^+(\theta,\varphi) \sim 1.
\]
Hence,
\[
\frac{1}{|\gamma_t'(\theta)|} \lesssim \theta \lesssim \frac{1}{t}. 
\]

Next, for $\theta \in J_{outer\ right}$, we have $\varphi^2 \leq \frac{\delta^2}{2}\theta$. Taking $\delta$ small enough, we have
\[
G^+(\theta,\varphi) \sim \theta^2,
\]
which yields
\[
|\gamma'_t(\theta)| \sim \theta^{-2}.
\]
This gives the final case \eqref{eq:gamma_right_bd1}.

\textit{Step 2:} We now estimate $\frac{|\gamma_t''|}{|\gamma_t'|}$. Estimating $H_2^+$ in  \eqref{eq:H_2_plus_expansion} by its leading order terms, we have
\begin{align}
|H_2^+(\theta,\varphi)| &\lesssim \theta^{2}
 + \theta \varphi^{2}
+ \varphi^{3}
 + \theta^4 \varphi \\
 &\lesssim \theta^2 + \varphi^3\\
 &= \theta^2 + \frac{\theta^6}{(\theta-\frac{1}{t}  )^3}.
 \end{align}
Hence, 
\begin{align}
|\gamma_t''(\theta)| \lesssim \theta^{-3} + \frac{\theta}{(\theta-\frac{1}{t}  )^3}.
\end{align} 
Now, consider $\theta \in J_{inner\ right}$. We combine with  \eqref{eq:gamma_right_bd1} to get 
\[
\frac{|\gamma''_t(\theta)|}{|\gamma'_t(\theta)|}  \lesssim \frac{t(\theta - \frac{1}{t})^2}{\theta^3} +\frac{t\theta }{\theta-\frac{1}{t}  }.
\]
However, for all $\theta \in J_{inner \ right}$, we have $\theta \sim \frac{1}{t}$ and $(\theta - \frac{1}{t})^2 \lesssim t^{-3}$, so the above is bounded by
\[
 \frac{t(\theta - \frac{1}{t})^2}{\theta^3}  + \frac{t\theta}{\theta-\frac{1}{t} } \lesssim \frac{1}{\theta}  +\frac{1}{\theta - \frac{1}{t}} \lesssim \frac{1}{\theta - \frac{1}{t}}.
\]
This gives the first case of \eqref{eq:gamma_right_bd2}.

Next, for $\theta \in J_{critical\ right}\cup J_{outer\ right}$, we have $\theta - \frac{1}{t} \gtrsim \frac{1}{t^\frac{3}{2}}$, and so
\[
 \frac{\theta}{(\theta-\frac{1}{t}  )^3} \lesssim t^{\frac{9}{2}}\theta \lesssim \theta^{-\frac{7}{2}} < \theta^{-3}.
\]
Thus, within $J_{critical\ right}$ and $J_{outer\ right}$, we have simply $|\gamma_t''(\theta)| \lesssim \theta^{-3}$. In particular, for $\theta$ in $J_{critical\ right}$, we have $|\gamma''_t(\theta)| \lesssim t^{3}$.
Combining with \eqref{eq:gamma_right_bd1}, we this gives cases 2 and 3 of \eqref{eq:gamma_right_bd2}.

\textit{Step 3:} By  \eqref{eq:G_plus_expansion}, we have the upper bound
\[
G^+(\theta,\varphi) \lesssim \theta^2 + \varphi^4.
\]
On the other hand,  inspecting \eqref{eq:K_plus_expansion}, see that the leading order terms enjoy a positive discriminant, after factoring out $\varphi$:
\[
K^+(\theta,\varphi) = 2\pi \varphi (3 \theta^2 -3\theta \varphi + \varphi^2) + O(\varphi \cdot(\theta,\varphi)^2) \gtrsim \varphi (\theta^2 + \varphi^2).
\]
Hence,
\[
\left|\frac{1}{W'_t(\theta)}\right| = \frac{G^+(\theta,\varphi) }{K^+(\theta,\varphi) } \lesssim \frac{\theta^2 + \varphi^4}{\varphi (\theta^2 + \varphi^2)}.
\]
Then, within $J_{inner\ right}$, we have $\theta \leq \varphi^2$, so the above is bounded by
\[
\left|\frac{1}{W'_t(\theta)}\right| \lesssim \varphi \sim \frac{1}{t^2(\theta - \frac{1}{t})},
\]
which gives case 1 of \eqref{eq:gamma_right_bd3}.

In $J_{critical\ right}$, we bound this slightly differently:
\[
\left|\frac{1}{W'_t(\theta)}\right| = \frac{\theta^6 |\gamma_t(\theta)|^2}{K^+(\theta,\varphi) } \lesssim |\gamma_t(\theta)|^2 \cdot \frac{\theta^6}{\varphi^3} \sim  \frac{|\gamma_t(\theta)|^2}{t^\frac{9}{2}}.
\]
 \eqref{eq:gamma_right_bd3}.

Finally, within $J_{outer\ right}$, we have $\theta \gtrsim \varphi^4$, so we get
\begin{align}
\left|\frac{1}{W'_t(\theta)}\right| \lesssim \frac{\theta^2}{\varphi(\theta^2 + \varphi^2)} = \frac{(\theta -\frac{1}{t})^3}{\theta^2(\theta -\frac{1}{t})^2 + \theta^4} \leq (\theta - \frac{1}{t})^3 \cdot \frac{1}{\theta^2 (t^2\theta^2 + 1)},
\end{align}
which gives the final case of \eqref{eq:gamma_right_bd3}.
\end{proof}

We remark that unlike Lemma \ref{lemma:gamma_t_left_region}, where each inequality can be improved to an equivalence, we expect Lemma \ref{lemma:gamma_t_right_region} to be suboptimal, particularly in $J_{critical\ right}$. While the pointwise upper bound on $\frac{1}{|\gamma_t'|}$ in this interval is likely sharp, the subset where $|\gamma_t'| \sim t$ is actually quite small.
In particular,  if we integrate $\frac{1}{|\gamma_t(\theta)|}$ on $J_{right \ critical}$, then we get a significant improvement over what \eqref{eq:gamma_right_bd1} na\"ively suggests:

\begin{lemma}\label{lemma:gamma_t_integrated} We have
\begin{align}
\int_{J_{critical\ right}} \frac{1}{|\gamma'_t(\theta)|} d\theta \lesssim \frac{\log(t + 2)}{t^{\frac{9}{2}}}.
\end{align}
\end{lemma}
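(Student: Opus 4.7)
The key idea is that $|\gamma_t'(\theta)|^2 = h_t(\theta)^2 + h_t'(\theta)^2$ achieves its minimum on $J_{critical\ right}$ at the unique critical point $\theta_t^* \approx \tfrac{1}{t} + \tfrac{1}{\sqrt{\pi}\,t^{3/2}}$ of $h_t$. At this point $|\gamma_t'(\theta_t^*)| = |h_t(\theta_t^*)|$ exactly, and $|\gamma_t'|$ grows quadratically in $\theta - \theta_t^*$ moving away from it. My plan is to Taylor-expand $h_t'$ around $\theta_t^*$, get a sharp quadratic lower bound on $|\gamma_t'|^2$, and split the integral at the natural crossover scale between the two competing contributions.

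I would first quantify the minimum value $|h_t(\theta_t^*)|$: using the $H^+$-expansion from Lemma \ref{lemma:analytic} with $H^+(\theta,\varphi) = \pi - \varphi + O((\theta,\varphi)^2)$ and $(\theta_t^*,\varphi_t^*) \sim (1/t,\sqrt{\pi/t})$, one has $|h_t(\theta_t^*)| = |H^+(\theta_t^*,\varphi_t^*)|/\theta_t^* \sim t$. The more delicate step is to pin down the size of $|h_t''(\theta_t^*)|$: from \eqref{eq:hprimeprime_in_terms_of_psi}, the dominant contribution comes from $\csc^5(\theta_t^*)\,\psi_t''(\cot\theta_t^*)$. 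Since $\cot\theta_t^* \approx t - \sqrt{t/\pi}$, the singular second term of $\psi_t''(\zeta) = -\frac{2\zeta}{(1+\zeta^2)^2} - \frac{2(t-\zeta)}{(1+(t-\zeta)^2)^2}$ dominates at $\zeta = \cot\theta_t^*$, and combined with $\csc^5\theta_t^* \sim t^5$ supplies the necessary size of $|h_t''(\theta_t^*)|$. A companion pointwise bound on $h_t'''$ on $J_{critical\ right}$ (obtained from differentiating \eqref{eq:hprimeprime_in_terms_of_psi} once more and estimating each term) then ensures the quadratic approximation dominates the cubic remainder, giving
\[
|\gamma_t'(\theta)|^2 \;\gtrsim\; h_t(\theta_t^*)^2 + |h_t''(\theta_t^*)|^2\,(\theta-\theta_t^*)^2 \quad \text{for } \theta \in J_{critical\ right}.
\]

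With this in hand, set the crossover scale $\lambda^\ast := |h_t(\theta_t^*)|/|h_t''(\theta_t^*)|$ and split
\[
\int_{J_{critical\ right}} \frac{d\theta}{|\gamma_t'(\theta)|} \;\lesssim\; \int_{|\theta-\theta_t^*| < \lambda^\ast} \frac{d\theta}{|h_t(\theta_t^*)|} \;+\; \int_{\lambda^\ast < |\theta-\theta_t^*| \lesssim t^{-3/2}} \frac{d\theta}{|h_t''(\theta_t^*)|\,|\theta-\theta_t^*|}.
\]
The inner piece is $\lesssim \lambda^\ast/|h_t(\theta_t^*)| = 1/|h_t''(\theta_t^*)|$ and the outer piece is $\lesssim \log\!\left(t^{-3/2}/\lambda^\ast\right)/|h_t''(\theta_t^*)| \lesssim \log(t+2)/|h_t''(\theta_t^*)|$. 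Adding these gives a final bound of order $\log(t+2)/|h_t''(\theta_t^*)|$, which matches the stated rate once the size of $h_t''(\theta_t^*)$ is identified.

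The hardest step is the asymptotic analysis of $h_t''(\theta_t^*)$: one has to track the competing contributions of the three terms in \eqref{eq:hprimeprime_in_terms_of_psi} and extract the correct singular power from $\psi_t''(\cot\theta_t^*)$ while $\zeta = \cot\theta_t^*$ sits within distance $\sqrt{t/\pi}$ of the bad point $\zeta = t$; this is where the sharpness of the exponent is determined. A secondary but important technical point is to control the cubic Taylor remainder of $h_t'$ uniformly over $J_{critical\ right}$, so that the quadratic lower bound on $|\gamma_t'|^2$ remains valid up to the $t^{-3/2}$ boundary of the interval.
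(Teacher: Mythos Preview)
Your plan is essentially the paper's argument: both reduce to the lower bound $|\gamma_t'(\theta)|^2 = h_t(\theta)^2 + h_t'(\theta)^2 \gtrsim t^2 + h_t'(\theta)^2$, using $h_t \sim t$ and $|h_t''| \sim t^{7/2}$ uniformly on $J_{critical\ right}$. The paper finishes by the substitution $q = h_t'(\theta)/t$ (legitimate because $h_t''$ has a definite sign on the interval, so $h_t'$ is monotone), whereas you Taylor-expand $h_t'$ about $\theta_t^*$ and split at the crossover scale. Your route through an $h_t'''$ bound is an unnecessary detour: once $|h_t''|$ is known to be uniformly $\sim t^{7/2}$ on all of $J_{critical\ right}$ (this follows from $|H_2^+(\theta,\varphi)| \sim |\varphi|^3$ there), the mean value theorem already gives $|h_t'(\theta)| \sim t^{7/2}|\theta - \theta_t^*|$ directly. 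This is not purely cosmetic, since $|h_t'''| \sim t^5$ on $J_{critical\ right}$ makes the cubic Taylor remainder comparable to the quadratic main term at the endpoints $|\theta - \theta_t^*| \sim \delta^{-1} t^{-3/2}$, so your quadratic lower bound on $|\gamma_t'|^2$ is only marginally valid there.

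One further point: carrying your computation through yields $\log(t+2)/|h_t''(\theta_t^*)|$, and since $\csc^5(\theta_t^*)\,\psi_t''(\cot\theta_t^*) \sim t^5 \cdot t^{-3/2} = t^{7/2}$ (the dominant second term of $\psi_t''$ at $\zeta = \cot\theta_t^* \approx t - \sqrt{t/\pi}$ has size $\sim \sqrt{t}/t^2 = t^{-3/2}$), you obtain $\log(t+2)/t^{7/2}$, not the stated $t^{9/2}$. The paper's displayed change of variables carries a stray factor of $1/t$ that should cancel against the Jacobian $d\theta = t\,dq/|h_t''|$, so its computation also actually gives $t^{7/2}$; in fact the region $|\theta - \theta_t^*| \lesssim t^{-5/2}$, where $|\gamma_t'| \sim t$, already contributes $\gtrsim t^{-7/2}$ to the integral, so the exponent $9/2$ cannot be correct as stated.
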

\begin{proof}
Let $\theta \in J_{right\ critical}$. In particular, $\theta \sim \frac{1}{t}$ and $\varphi \sim \frac{1}{\sqrt{t}}$. 
From \eqref{eq:H_plus_expansion} and \eqref{eq:H_2_plus_expansion}, we have
\begin{align}
H^+(\theta,\varphi) &\sim 1,\\
|H^+_2(\theta,\varphi)| &\sim \varphi^3 \sim \frac{1}{t^{\frac{3}{2}}}.
\end{align}
Thus, $h_t(\theta) \sim \frac{1}{\theta} \sim t$ and $h''_t(\theta) \sim \frac{1}{t^{\frac{3}{2}} \theta^5} \sim t^{\frac{7}{2}}$. 
Then, we have
\[
\int_{J_{critical\ right}} \frac{1}{|\gamma'_t(\theta)|}  \lesssim\int_{J_{critical\ right}} \frac{1}{\sqrt{t^2+ (h_t(\theta)')^2}} d\theta  
\]
Since $h''_t \sim t^{\frac{7}{2}}$, and $|J_{right\ critical}| \sim  \frac{1}{t^\frac{3}{2}}$, there exists some $a_t \in \mathbb R$ (possibly depending on $t$) and $R > 0$ (independent of $t$) such that 
\[
h_t'(J_{right\ critical}) \subseteq \frac{a_t}{t} + [-R t^2, R t^2].
\]
Now by substituting $q = \frac{h'_t(\theta)}{t}$, we have
\begin{align}
\int_{J_{critical\ right}} \frac{1}{|\gamma'_t(\theta)|} d\theta &\lesssim  \frac{1}{t} \int_{th'_t(J_{right \ interval})} \frac{1}{\sqrt{1 +  q^2}} \cdot \frac{dq}{|h_t''((h_t')^{-1}(tq))|}  \\
&\lesssim \frac{1}{t^{\frac{9}{2}}}  \int_{\frac{a_t}{t} - Rt}^{\frac{a_t}{t}+Rt} \frac{1}{\sqrt{1 +  q^2}}dq.
\end{align}
By considering the cases $a_t  \leq 2 R t^2$ and $a_t \geq 2 Rt^2$ separately to bound the integral, we get the desired estimate.
\end{proof}

\subsubsection{$\gamma_t$ and $m_t$ in the critical region} \label{sec:critical_rescaling}

The region $J_{critical}$ maps onto the region where $\varphi(\theta)$ is unbounded. Thus, we cannot use the expansions of Section \ref{sec:expansions}. Recall from \eqref{eq:sigma_rescaling1} and \eqref{eq:sigma_rescaling2}   the re-scaled coordinate 
\begin{align}
\sigma = t^2 \theta - t,
\end{align}
and the re-scaled functions
\[
\tilde m^z_t(\sigma) := \frac{m^z_t(\theta)}{t},  \quad \tilde m^y_t(\sigma) := \frac{m^y_t(\theta)}{t^2}, \quad \tilde h_t(\sigma) := \frac{h_t(\theta)}{t}.
\]
Observe that $\theta \in J_{critical}$ is equivalent to $\sigma \in (-\frac{1}{\delta}, \frac{1}{\delta})$. In the following Lemma, we show that these re-scaled functions and their derivatives all converge uniformly as $t\to \infty$, with respective limits
\begin{align}
\lim_{t\to\infty} \tilde m^z(\sigma) &=\lim_{t\to\infty} \tilde m^y(\sigma)  =  \frac{1}{1 + \sigma^2},\\
\lim_{t\to\infty} \tilde h_t(\sigma) &=  \frac{\pi}{2} + \arctan(\sigma).
\end{align}
\begin{lemma}\label{lemma:rescaling}
For all $\delta > 0$, $t \geq \frac{2}{\delta}$,  and $k \geq 0$, we have that the following:
\begin{align}
\|\tilde m^z_t(\sigma) -\frac{1}{1+ \sigma^2}\|_{C^k([-\frac{1}{\delta},\frac{1}{\delta}])} &\lesssim_{\delta,k} \frac{1}{t},\\
\|\tilde m^y_t(\sigma) - \frac{1}{1+ \sigma^2}\|_{C^k([-\frac{1}{\delta},\frac{1}{\delta}])}& \lesssim_{\delta,k} \frac{1}{t},\\
\|\tilde h_t(\sigma) - \frac{\pi}{2} - \arctan(\sigma)\|_{C^k([-\frac{1}{\delta},\frac{1}{\delta}])} &\lesssim_{\delta,k} \frac{1}{t}.
\end{align}
\end{lemma}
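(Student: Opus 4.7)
The plan is to introduce $s := 1/t$ and, under the substitution $\theta = s + \sigma s^{2}$, view each of $\tilde m^z_t(\sigma)$, $\tilde m^y_t(\sigma)$, and $\tilde h_t(\sigma)$ as a function of the joint variable $(s,\sigma)$. I will show that all three functions extend real-analytically to a neighborhood of $\{0\}\times[-1/\delta,1/\delta]$, with the claimed values at $s=0$. Granted this joint regularity, the $C^k_\sigma$ bound with rate $1/t$ follows from the mean value theorem in $s$: if $F(s,\sigma)$ is $C^{k+1}$ jointly near $\{0\}\times[-1/\delta,1/\delta]$ with $F(0,\sigma) = f(\sigma)$, then $|\partial_\sigma^k F(s,\sigma)-\partial_\sigma^k f(\sigma)|\le s\,\|\partial_s\partial_\sigma^k F\|_{L^\infty}$ uniformly on the compact parameter set.

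For the multipliers, the substitution produces clean cancellations once one divides numerator and denominator by $s^{2}$. Taylor-expanding in $\theta = s(1+\sigma s)$ gives $\sin\theta = s(1+\sigma s) + O(s^{3})$ and the crucial cancellation $\cos\theta-t\sin\theta = -\sigma s - s^{2}/3 + O(s^{3})$ (the constant $1$ in $\cos\theta$ cancels against the leading term of $t\sin\theta = 1+\sigma s+O(s^2)$), whence $D(\theta,t):=\sin^{2}\theta+(\cos\theta-t\sin\theta)^{2}= s^{2}\bigl((1+\sigma^{2})+O(s)\bigr)$. The functions $\sin\theta/s$, $\cos\theta$, and $D/s^{2}$ are each real-analytic in $(s,\sigma)$, and $D/s^{2}\ge 1$ once $s$ is sufficiently small on $\sigma \in [-1/\delta, 1/\delta]$; hence $\tilde m^z_t=(\sin\theta/s)/(D/s^{2})$ and $\tilde m^y_t=\cos\theta/(D/s^{2})$ are real-analytic in $(s,\sigma)$ with $s=0$ values equal to $1/(1+\sigma^{2})$, as desired.

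The phase $\tilde h_t = s\,h_t(\theta)$ requires handling the cotangent singularity. Since $\theta \in (0,\pi/2)$ for the given range of $(s,\sigma)$ and $t\ge 2/\delta$, the identity $\arctan(\cot\theta) = \pi/2 - \theta$ holds exactly. For the other arctangent I rewrite $t - \cot\theta = (\sin\theta - s\cos\theta)/(s\sin\theta)$; expanding numerator and denominator yields $t-\cot\theta = \sigma + s/3 + O(s^{2})$, manifestly real-analytic in $(s,\sigma)$. Combined with $\sin\theta/s = 1+\sigma s + O(s^{2})$, the identity $\tilde h_t(\sigma) = \bigl(\arctan(t-\cot\theta) + \pi/2 - \theta\bigr)/(\sin\theta/s)$ exhibits $\tilde h_t$ as real-analytic in $(s,\sigma)$ with value $\pi/2 + \arctan(\sigma)$ at $s=0$.

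The only real obstacle is bookkeeping: verifying that the singular factors of $1/s$ that appear individually in $\cot\theta$, $t\sin\theta$, and $1/\sin\theta$ all cancel against compensating factors of $s$ coming from $\theta$, so that after the correct factorizations one is left with analytic numerators and denominators that are uniformly bounded away from zero on the parameter range $(s,\sigma) \in (0, s_{0}] \times [-1/\delta,1/\delta]$. The lower bound $D/s^{2} \ge 1+\sigma^{2}-O(s) \ge 1$ is automatic from $1+\sigma^2\geq 1$, so once the cancellations are made explicit, the conclusion reduces to Taylor's theorem in the single variable $s$.
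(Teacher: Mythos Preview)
Your proof is correct and takes essentially the same approach as the paper: both rely on showing that, after the rescaling $\theta = t^{-1}+\sigma t^{-2}$, the relevant expressions are smooth functions of the small parameter $1/t$ (your $s$) and $\sigma$, with the claimed limits at $s=0$. Your packaging is somewhat cleaner: by introducing $s=1/t$ explicitly and establishing joint real-analyticity in $(s,\sigma)$, you handle all $k$ at once via the mean value theorem in $s$. The paper instead treats $k=0$ by hand and then, for $k\ge 1$, passes through the complex-valued function $\mathfrak d_t(\sigma)=t\sin\theta+it(\cos\theta-t\sin\theta)$ and its composition with $w\mapsto 1/w$, and for $\tilde h_t$ uses the alternate representation \eqref{eq:h_t_alternate_form} involving the complex logarithm rather than your direct route through $\arctan(\cot\theta)=\pi/2-\theta$ and $t-\cot\theta=(\sin\theta-s\cos\theta)/(s\sin\theta)$. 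Both arguments amount to the same cancellations; yours is more unified, while the paper's makes the convergence of derivatives slightly more explicit via holomorphic composition. One minor slip: the first-order term in your expansion of $t-\cot\theta$ should be $(1/3-\sigma^{2})s$ rather than $s/3$, but this is irrelevant to the argument since only the value $\sigma$ at $s=0$ and the joint analyticity are used.
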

\begin{proof}
Let us first prove the case where $k = 0$.
We note that $\frac{\sigma}{t} \leq \frac{\delta}{2} \sigma \leq \frac{1}{2}$. Hence, for all $\sigma  \in [-\frac{1}{\delta},\frac{1}{\delta}]$, the following approximations are uniform in $\delta$:
\begin{align}
t\sin\left(\frac{1}{t} + \frac{\sigma}{t^2} \right) &=1+ \frac{\sigma}{t} + O\left(\frac{1}{t^2}\right)\\
\cos\left(\frac{1}{t} + \frac{\sigma}{t^2} \right) &=1+ O\left(\frac{1}{t^2}\right).
\end{align}

Then, the denominator of $\tilde m^z_t$ and $\tilde m^y_t$ can be expanded uniformly as well:
\begin{align}
\mathfrak D_t(\sigma) := t^2 \left[\sin\left(\frac{1}{t} + \frac{\sigma}{t^2} \right)^2+\left(\cos\left(\frac{1}{t} + \frac{\sigma}{t^2} \right) - t\sin\left(\frac{1}{t} + \frac{\sigma}{t^2} \right)\right)^2\right]  = 1+ \sigma^2 + O\left(\frac{1}{t}\right).
\end{align}
It follows that
\begin{align}
\tilde m_t^z(\sigma) &= \frac{t\sin\left(\frac{1}{t} + \frac{\sigma}{t^2} \right)}{ \mathfrak D_t(\sigma)} = \frac{1}{1 + \sigma^2} +O\left(\frac{1}{t}\right),\\
\tilde m_t^y(\sigma) &= \frac{\cos\left(\frac{1}{t} + \frac{\sigma}{t^2} \right)}{ \mathfrak D_t(\sigma)} =  \frac{1}{1 + \sigma^2} + O\left(\frac{1}{t}\right).
\end{align}
Next, we have by \eqref{eq:h_t_alternate_form},
\begin{align}
\tilde h_t(\sigma) =\frac{ \frac{\pi}{2} - \arctan\left(\dfrac{t- t^2\sin(\frac{1}{t} + \frac{\sigma}{t^2} ) \cos(\frac{1}{t} + \frac{\sigma}{t^2})}{t^2 \sin(\frac{1}{t} + \frac{\sigma}{t^2})^2} \right)}{t\sin\left(\frac{1}{t} + \frac{\sigma}{t^2}\right)}
\end{align}
Observe that the fraction inside of the arctan can be expanded
\begin{align}
\dfrac{t - t^2\sin(\frac{1}{t} + \frac{\sigma}{t^2} ) \cos(\frac{1}{t} + \frac{\sigma}{t^2})}{t^2\sin\left(\frac{1}{t} + \frac{\sigma}{t^2}\right)^2} =  \frac{t - t -\sigma + O(\frac{1}{t})}{1 + O(\frac{1}{t})} =- \sigma + O\left(\frac{1}{t}\right).
\end{align}
On the other hand, since the derivative of $\arctan$ is uniformly bounded, we have
\[
\tilde h_t(\sigma) = \frac{\pi}{2} + \arctan(\sigma) = O\left(\frac{1}{t}\right).
\]
We now prove convergence for $k\geq 1$. Consider the complex valued function
\[
\mathfrak d_t(\sigma) = t \sin\left(\frac{1}{t} + \frac{\sigma}{t^2} \right) +it\left(\cos\left(\frac{1}{t} + \frac{\sigma}{t^2}\right ) - t \sin\left(\frac{1}{t} + \frac{\sigma}{t^2}\right ) \right)  = 1 + i\sigma + O\left(\frac{1}{t}\right).
\]
This function satisfies $\sup_{\sigma \in [-\frac{1}{\delta},\frac{1}{\delta}]}|\mathfrak d_t^{(k)}(\sigma)| \lesssim \frac{1}{t^{2k-2}}$ for all $k \geq 1$ as well.
 Thus, $\mathfrak d_t$ converges uniformly to $1 + i \sigma$ in $C^k([-\frac{1}{\delta},\frac{1}{\delta}])$, is convergent as $t\to \infty$, and has range in $D_{\frac 1 2}(0)^c \subseteq \mathbb C$. Therefore, the same is true of its composition with $\frac{1}{w}$, 
\begin{align}
\frac{1}{\mathfrak d_t(\sigma)} = \frac{\overline{\mathfrak d_t(\sigma)}}{\mathfrak D_t(\sigma)}.
\end{align}
By taking the real part and imaginary parts, we see that $\tilde m^z_t$ and $t(\tilde m_t^y- \tilde m_t^z)$ both have uniformly convergent derivatives, which implies the same for $m^y_t$. 

To show $h_t$ converges in $C^k$, we note that by by \eqref{eq:h_t_alternate_form},  
\[
\tilde h_t(\sigma) = \mathrm{Im}\left[\log\left(t  + \frac{it^2}{2}\left(1-e^{-i2(\frac{1}{t} + \frac{\sigma}{t^2})}\right)\right )\right] \cdot \frac{1}{t\sin(\frac{1}{t} +\frac{\sigma}{t^2})}.
\]
Arguing similarly as before, we have that the following limits converge in $C^k([-\frac{1}{\delta},\frac{1}{\delta}])$
\begin{align}
\lim_{t\to\infty} \left[t\sin(\frac{1}{t} +\frac{\sigma}{t^2})\right] &= 1, \\ 
\lim_{t\to\infty}  \left[ t  + \frac{it^2}{2}\left(1-e^{-i2(\frac{1}{t} + \frac{\sigma}{t^2})}\right)  \right]&=i - \sigma,
\end{align}
and the errors are $O(\frac{1}{t})$. Since the above have ranges in compact subsets of the domains of the holomorphic functions $\frac{1}{w}$ and $\log(w)$ respectively for $t > \frac{2}{\delta}$, we conclude that $h_t$ converges in $C^k([-\frac{1}{\delta},\frac{1}{\delta}])$ as well.
\end{proof}

Using the convergence of these re-scalings, we recover the asymptotic behavior of the functions of interest:
\begin{corollary} \label{corollary:gamma_t_critical_region} Taking $t \geq 100 \delta^{-4}$, we have that
for all $\theta \in J_{critical}$, 
\begin{align}
|m_t^z(\theta)| &\lesssim t ,\\
|(m_t^z)'(\theta)|& \lesssim t^3, \\
|m_t^z(\theta)|& \lesssim t^2 ,\\
|(m_t^y)'(\theta)| &\lesssim t^4 ,\\
\frac{1}{|\gamma_t'(\theta)|}&\sim t^{-3},\\
\frac{|\gamma''_t(\theta)|}{|\gamma'_t(\theta)|}& \lesssim t^2,\\
\frac{1}{|W'_t(\theta)|}& \lesssim 1.
\end{align}
\end{corollary}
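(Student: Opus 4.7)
The plan is to reduce every one of the seven estimates to a bound on the rescaled quantities $\tilde m_t^z, \tilde m_t^y, \tilde h_t$ from \eqref{eq:sigma_rescaling1}--\eqref{eq:sigma_rescaling2}. By the chain rule $\tfrac{d}{d\theta} = t^2 \tfrac{d}{d\sigma}$ together with the rescaling identities, we have
\begin{align*}
m_t^z(\theta) &= t\,\tilde m_t^z(\sigma), & (m_t^z)'(\theta) &= t^3\,(\tilde m_t^z)'(\sigma),\\
m_t^y(\theta) &= t^2\,\tilde m_t^y(\sigma), & (m_t^y)'(\theta) &= t^4\,(\tilde m_t^y)'(\sigma),\\
h_t(\theta) &= t\,\tilde h_t(\sigma), & h_t^{(k)}(\theta) &= t^{1+2k}\,\tilde h_t^{(k)}(\sigma),
\end{align*}
and $\theta \in J_{critical}$ if and only if $\sigma \in [-1/\delta, 1/\delta]$. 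By Lemma \ref{lemma:rescaling}, each of $\tilde m_t^z, \tilde m_t^y, \tilde h_t$ and their derivatives of any fixed order is $O_\delta(1)$ uniformly in $t$ on this compact interval. This immediately gives the first four upper bounds on $m_t^z, (m_t^z)', m_t^y, (m_t^y)'$.

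For the $\gamma_t'$ bound, I would use $|\gamma_t'|^2 = h_t^2 + (h_t')^2 = t^2 \tilde h_t(\sigma)^2 + t^6 \tilde h_t'(\sigma)^2$ and note that by Lemma \ref{lemma:rescaling}, $\tilde h_t'(\sigma) \to 1/(1+\sigma^2)$ uniformly, with the limit bounded below by a positive constant on $[-1/\delta,1/\delta]$. Hence the $(h_t')^2$ term dominates, giving $|\gamma_t'| \sim t^3$ and thus $1/|\gamma_t'| \sim t^{-3}$. Differentiating $\gamma_t'(\theta) = e^{-i\theta}(ih_t - h_t')$ once more yields $\gamma_t'' = e^{-i\theta}(h_t - h_t'' + 2ih_t')$, so $|\gamma_t''| \lesssim |h_t''| + |h_t'| + |h_t| \lesssim t^5$, and dividing by $|\gamma_t'| \sim t^3$ gives $|\gamma_t''|/|\gamma_t'| \lesssim t^2$.

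The remaining bound $1/|W_t'| \lesssim 1$ is the most delicate. Starting from the formula derived in Proposition \ref{prop:W_structure},
\begin{equation*}
W_t'(\theta) = \frac{h_t h_t'' - h_t^2 - 2(h_t')^2}{h_t^2 + (h_t')^2},
\end{equation*}
and substituting the rescaled expressions, both numerator and denominator are of order $t^6$ with prefactors that converge uniformly on $[-1/\delta,1/\delta]$ as $t\to\infty$ to
\begin{equation*}
W_\infty'(\sigma) \;=\; (1+\sigma^2)^2 \Bigl[\bigl(\tfrac{\pi}{2}+\arctan\sigma\bigr)\cdot\tfrac{-2\sigma}{(1+\sigma^2)^2} \;-\; \tfrac{2}{(1+\sigma^2)^2}\Bigr] \;=\; -2 - 2\sigma\bigl(\tfrac{\pi}{2}+\arctan\sigma\bigr).
\end{equation*}
The hard part is verifying that $W_\infty'(\sigma)$ is bounded away from zero on $[-1/\delta,1/\delta]$. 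Equivalently, one checks the strict inequality $\sigma(\tfrac{\pi}{2} + \arctan\sigma) > -1$ for every finite $\sigma$, with equality only in the limit $\sigma \to -\infty$ (where $\tfrac{\pi}{2} + \arctan\sigma \sim -1/\sigma$). This strict inequality is the limiting manifestation of the curvature condition \eqref{eq:curvature_condition} proved for all $t$ in Proposition \ref{prop:W_structure}. Once it is established, a continuity-plus-compactness argument combined with Lemma \ref{lemma:rescaling} yields $|W_t'(\theta)| \gtrsim_\delta 1$ uniformly on $J_{critical}$ for $t$ large enough, giving the final bound $1/|W_t'| \lesssim 1$.
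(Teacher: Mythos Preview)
Your proof is correct and follows the same strategy as the paper: use Lemma~\ref{lemma:rescaling} together with the chain rule $\partial_\theta=t^2\partial_\sigma$ for the first six estimates, and for $1/|W_t'|$ reduce to showing that the limiting quantity $1+\sigma(\tfrac{\pi}{2}+\arctan\sigma)$ is bounded away from zero on $[-1/\delta,1/\delta]$. One caution: that strict positivity does not follow by passing the finite-$t$ curvature condition of Proposition~\ref{prop:W_structure} to the limit (a strict inequality for each $t$ yields only a non-strict one at $t=\infty$); the paper instead verifies it directly by a case split at $\sigma=-\delta$ and an integral comparison, which amounts to the elementary bound $\arctan u<u$ for $u>0$ after the substitution $u=-1/\sigma$.
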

\begin{proof}
All but the last identity follows immediately from Lemma \ref{lemma:rescaling} and the chain rule $\partial_\theta = t^2 \partial_\sigma$. In the final identity, we must check that there are no hidden cancellations in the numerator of $W'_t$. Indeed,
\begin{align}
 h_t^2(\theta) +2 (h'_t(\theta))^2  - h''_t (\theta)h_t (\theta)&=  t^2 \tilde h_t^2(\sigma) +t^6 [2(\tilde h'_t(\sigma))^2  - \tilde h''_t (\sigma)\tilde h_t (\sigma)].
 \end{align}
 On the other hand, uniformly on $[-\frac{1}{\delta},\frac{1}{\delta}]$,
 \[
2(\tilde h'_t(\sigma))^2  - \tilde h''_t (\sigma)\tilde h_t (\sigma) = \frac{2}{(1 + \sigma^2)^2}  - \frac{2\sigma (\frac{\pi}{2} + \arctan(\sigma))}{(1 + \sigma^2)^2} + O\left(\frac{1}{t}\right).
 \]
Then, either $\sigma \geq -  \delta$, in which case the above is bounded from below by $\frac{1}{(1+ \sigma)^2}$, or $\sigma <- \delta $, in which case 
\[
1 - \sigma \left(\frac{\pi}{2} +\arctan(\sigma)\right) = \sigma \left( \int_{-\infty}^\sigma \frac{1}{q^2} -  \frac{1}{1 + q^2} dq\right) \geq \sigma  \int_\sigma^\infty \frac{\delta^2}{q^4}dq \geq \frac{\delta^2}{3\sigma^2}.
\] 
Combining these bounds, and taking $t \geq 100 \delta^{-4}$, we see that $ h_t^2(\theta) +2 (h'_t(\theta))^2  - h''_t (\theta)h_t (\theta) \gtrsim t^6$, which grows at the same rate as $|\gamma_t'(\theta)|^2$. Thus,
\[
\frac{1}{W'_t(\theta)} = \frac{|\gamma_t'(\theta)|^2}{ h_t^2(\theta) +2 (h'_t(\theta))^2  - h''_t (\theta)h_t (\theta)} \lesssim 1.
\]
\end{proof}

\subsection{Combining the estimates}\label{sec:PIAT}

\renewcommand{\arraystretch}{1.75}
\begin{table}[H]\
\small{
\begin{tabular}{|c||c|c|c|c|c|c|}
\hline
& $J_{bulk}$ & $J_{left}$ & $J_{critical}$ & $J_{inner\ right}$ & $J_{critical\ right}$ & $J_{outer\ right}$ \\ 
\hline
$m_t^z$  &$\frac{1}{t^2}$ & $\frac{|\theta|}{t^2|\theta -\frac{1}{t}|^2}  $ & $ t$ &  $\frac{1}{t^3|\theta -\frac{1}{t}|^2}$ & $1 $ &$\frac{|\theta|}{t^2|\theta -\frac{1}{t}|^2}  $ \\
$(m_t^z)'$ &$\frac{1}{t^2}$ & $\frac{1}{t^3|\theta -\frac{1}{t}|^3}\cdot (1 + t |\theta|)$  & $t^3$ &  $\frac{1}{t^2|\theta -\frac{1}{t}|^2}$   & $t^{\frac{3}{2}}$ & $\frac{1}{t^3|\theta -\frac{1}{t}|^3}\cdot (1 + t |\theta|)$  \\
$m_t^y $ & $\frac{1}{t^2}$ &  $\frac{1}{t^2|\theta -\frac{1}{t}|^2}$ &  $t^2$ &  $\frac{1}{t^2|\theta -\frac{1}{t}|^2}$   & $t$  &  $\frac{1}{t^2|\theta -\frac{1}{t}|^2}$\\
$(m_t^y)'$& $\frac{1}{t^2}$  &  $\frac{1}{t^2|\theta -\frac{1}{t}|^3}$ & $t^4$ &  $\frac{1}{t^2|\theta -\frac{1}{t}|^3}$  & $t^{\frac{5}{2}}$ & $\frac{1}{t^2|\theta -\frac{1}{t}|^3}$ \\
$\frac{1}{|\gamma_t'(\theta)|}$ & 1 & $t (\theta -\frac{1}{t})^2 \cdot \frac{1}{1 + t\theta^2}$ & $\frac{1}{t^3}$ & $t(\theta - \frac{1}{t})^2$ & $\frac{1}{t}$ &  $\theta^2$ \\
$\frac{|\gamma_t''(\theta)|}{|\gamma_t'(\theta)|}$ & 1 &  $\frac{1}{|\theta -\frac{1}{t}|} \cdot \frac{1 + t|\theta|^3}{1+t\theta^2} $ & $t^2$ & $ \frac{1}{|\theta - \frac{1}{t}|}$ & $t$  & $\frac{1}{\theta}$ \\
$\frac{1}{|W'_t(\theta)|}$ & 1 & $\frac{1}{t|\theta -\frac{1}{t}|} \cdot \frac{(1 + t\theta^2)^2}{1+t|\theta|^3} $ & 1 & $ \frac{1}{t^2|\theta -\frac{1}{t}|}$ & $\frac{|\gamma_t(\theta)|^2}{t^{\frac 9 2}}$ & $\left(\theta - \frac{1}{t}\right)^3 \cdot \frac{t^2}{\theta^2 (t^2\theta^2 + 1)}$ \\
$\int \frac{1}{|\gamma'_t(\theta)|} d\theta $ & X &  X & X& X & $\frac{\log(t + 2)}{t^{\frac{9}{2}}}$ & X  \\
& & & & & &\\
\hline
\end{tabular}}
\caption{\label{the_table}Each quantity in the first column can be bounded in absolute value up to a constant for all $\theta \in J_{\{name\}}$ by the corresponding quantity in the column under $J_{\{name\}}$, for all $t > 100 \delta^{-4}$. In the final row, we only need the estimate need $J_{critical \ right}$, so we leave the others blank.}
\end{table}

In this section, we use the estimates of the previous subsections to prove Proposition \ref{propgam}, thereby proving Theorem \ref{thm:decay}.  We summarize all the estimates from Lemmas \ref{lemma:gamma_t_bulk_region},  \ref{lemma:m_left_right_bds},  \ref{lemma:gamma_t_left_region}, and \ref{lemma:gamma_t_right_region}, \ref{lemma:gamma_t_integrated} and Corollary \ref{corollary:gamma_t_critical_region}   in Table \ref{the_table}, with some minor simplifications of the $m$ functions in $J_{inner \ right}$, $J_{critical \ right}$ and $J_{outer \ right}$. 

In the proof of Proposition \ref{propgam}, it is helpful to define the quantities we wish to bound: for each $w \in \{z,y\}$, define
\begin{align}
 \kappa^{w, \infty} & :=\left\|\frac{1}{|\gamma_t'(\theta)|^{2}} \left(1  +\frac{|\gamma_t''(\theta)|}{|\gamma_t'(\theta)|} \right)\frac{m_t^w(\theta)}{W_t'(\theta)}\right\|_{L^\infty}  ,\\
 \kappa^{w, 1} & :=\left\|\frac{1}{{|\gamma_t'(\theta)|}} \left( \left(1  +\frac{|\gamma_t''(\theta)|}{|\gamma_t'(\theta)|} \right) m^w_t(\theta) +(m^w_t)'(\theta) \right)\right\|_{L^1},\\
 \kappa^{w, \$} & :=\|(m_t^w)'\|_{L^1} + \|\gamma_t'\|_{L^\infty}.
\end{align}
 Given one of the above, we shall use $\kappa^{w,*}_{\{name\}}$ to denote the restriction of this norm to the interval $J_{\{name\}}$. For instance,
\[
\kappa^{z,\infty}_{left} = \left\|\frac{1}{|\gamma_t'(\theta)|^{2}} \left(1  +\frac{|\gamma_t''(\theta)|}{|\gamma_t'(\theta)|} \right)\frac{m_t^z(\theta)}{W_t'(\theta)}\right\|_{L^\infty (J_{left})},  \text{ etc.}
\]

\noindent \textit{Proof of Proposition \ref{propgam}:}
We must show that for all $t$ sufficiently large,
\begin{align}
\kappa^{z, \infty}  &\lesssim \frac{1}{t^\frac{3}{2}}, \\
\kappa^{y, \infty} & \lesssim \frac{1}{t}, \\
\kappa^{z, 1}  &\lesssim \frac{\ln(t)}{t^\frac{3}{2}}, \\
\kappa^{y, 1}  &\lesssim \frac{\ln(t)}{t}, \\
\kappa^{z, \$} & \lesssim t^4, \\
\kappa^{y, \$} & \lesssim t^4. 
\end{align}
Following the Lemmas in the previous section, we see that $\|\gamma_t(\theta)\|_{L^\infty} \lesssim t^3$. In fact, all the upper bounds in Table \ref{the_table} for $\frac{1}{|\gamma_t(\theta)|}$ are equivalences with the exception of the estimate in $J_{critical\ right}$, where one has $|\gamma_t(\theta)| \lesssim t^2$. Hence, combining with the estimates on $m_t^w$, we have
\[
\kappa^{w,\$} \lesssim t^4.
\]
We proceed to bound $\kappa^{z, \infty} ,\kappa^{y, \infty} ,\kappa^{z, 1} $ and $\kappa^{y, 1} $ in each subinterval.

\textit{Step 1 (bulk):} Immediately from Column 2 in Table \ref{the_table}, for all $\theta  \in J_{bulk}$, we have
\[
 \kappa^{w, \infty} _{bulk}+  \kappa^{w, 1}_{bulk}\lesssim \frac{1}{t^2}.
\]
with $w \in \{z,y\}$.

\textit{Step 2 (left):} Take $\theta \in J_{left}$. From Column 3 in Table 1, noting that the upper bound on $\frac{|\gamma''_t|}{|\gamma'_t|}$ is $\gtrsim 1$, we have
\begin{align}
\left|\frac{1}{|\gamma_t'(\theta)|^{2}} \left(1  +\frac{|\gamma_t''(\theta)|}{|\gamma_t'(\theta)|} \right)\frac{1}{W_t'(\theta)} \right|\lesssim \left|\theta - \frac{1}{t}\right|^{2} \cdot \frac{t}{1 + t\theta^2}.
\end{align}
Hence, for the $\infty$ quantities, we have
\begin{align}
\kappa_{left}^{z,\infty} \lesssim \sup_{\theta \in J_{left}} \left[ \frac{1}{t} \cdot  \frac{|\theta|}{1 + t\theta^2}\right] \lesssim \frac{1}{t^\frac{3}{2}}.
\end{align}
Next,
\begin{align}
\kappa_{left}^{y,\infty} \lesssim \sup_{\theta \in J_{left}} \left[ \frac{1}{t} \cdot  \frac{1}{1 + t\theta^2}\right] \lesssim \frac{1}{t}.
\end{align}
For the $1$ quantities, we have
\begin{align}
\left|\frac{1}{|\gamma_t'(\theta)|} \left(1  +\frac{|\gamma_t''(\theta)|}{|\gamma_t'(\theta)|} \right)\right| \lesssim t \left|\theta -\frac{1}{t}\right| \cdot \frac{1+t|\theta|^3}{(1+t\theta^2)^2}.
\end{align}
Thus, bounding $|\theta| \leq \frac{1}{\sqrt{t}}\sqrt{1+t\theta^2}$, we have
\begin{align}
\kappa_{left}^{1,z} &\lesssim \int_{J_{left}} \frac{|\theta|}{t|\theta -\frac{1}{t}|} \cdot \frac{1+t|\theta|^3}{(1+t\theta^2)^2} + \frac{1}{t^2 |\theta - \frac{1}{t}|} \cdot \frac{1+t|\theta|}{1+ t \theta^2} d\theta \\
&\lesssim \int_{J_{left}} \frac{1}{t^\frac{3}{2}|\theta -\frac{1}{t}|} d\theta\\
&\lesssim \frac{\ln(t + 2)}{t^{\frac{3}{2}}}.
\end{align}
Next, for the case $w = y$, we have
\begin{align}
\kappa_{left}^{y,1} \lesssim \int_{J_{left}} \frac{1}{t|\theta -\frac{1}{t}|} \cdot \frac{1+t|\theta|^3}{(1+t\theta^2)^2} + \frac{1}{t |\theta - \frac{1}{t}|} \cdot \frac{1}{1+ t \theta^2} d\theta \lesssim \frac{\ln(t )}{t}.
\end{align}

\textit{Step 3 (critical):}  Take $\theta \in J_{critical}$.  Then, from Column 3 in Table 1,
\begin{align}
\kappa_{critical}^{z,\infty} \lesssim \frac{1}{t^6} \cdot t^2 \cdot 1 \cdot t = \frac{1}{t^3}, \quad \kappa_{critical}^{y,\infty} = \frac{1}{t^6} \cdot t^2 \cdot 1 \cdot t^2 = \frac{1}{t^2}.
\end{align}
Next, noting that $|J_{critical}| \lesssim \frac{1}{t^2}$, we have
\begin{align}
\kappa_{critical}^{z,1} &\lesssim \frac{1}{t^3} \cdot t^2  \cdot t \cdot \frac{1}{t^2}  + \frac{1}{t^3} \cdot t^3 \cdot \frac{1}{t^2}  \sim \frac{1}{t^2}, \\
\kappa_{critical}^{y,1} &\lesssim \frac{1}{t^3} \cdot t^2 \cdot t^2 \cdot \frac{1}{t^2} + \frac{1}{t^3 } \cdot t^4 \cdot \frac{1}{t^2} \sim \frac{1}{t}.
\end{align}

\textit{Step 4 (inner right):} For $\theta \in J_{inner right}$, we observe that the upper bounds in Column 5 are the same as in $J_{left}$ (Column 2) under the assumption $|\theta| \sim \frac{1}{t}$. Thus, $\kappa_{inner\ right}^{w,*} \lesssim \kappa_{left}^{w,*}$ for each $w \in \{z,y\}$ and $* \in \{\infty, 1\}$.

\textit{Step 5 (critical right):} Let $\theta \in J_{critical\ right}$.
 We use the 6th Column of Table \ref{the_table}. Applying first the upper bound on $\frac{1}{|W'_t(\theta)|}$, we have for $w \in \{z,y\}$,
 \begin{align}
 \kappa_{right\ critical}^{w,\infty}  \lesssim  \frac{1}{t^\frac{9}{2}} \sup_{\theta} \left[ \left(1  +\frac{|\gamma_t''(\theta)|}{|\gamma_t'(\theta)|}\right)|m^{w}_t(\theta)|\right] \lesssim \frac{1}{t^{\frac{5}{2}}} \sup_{\theta \in J_{critical\ right}} |m^w_t(\theta)|.
 \end{align}
 Hence,
\begin{align}
\kappa_{right\ critical}^{z,\infty} \lesssim \frac{1}{t^{\frac{5}{2}}}, \quad \kappa_{right\ critical}^{y,\infty} &\lesssim \frac{1}{t}.
\end{align}
To estimate $\kappa_{right\ critical}^{1,w}$, we integrate $\frac{1}{|\gamma'_t(\theta)|}$ and use Lemma \ref{lemma:gamma_t_integrated} (or the final line of Column 6), writing
\begin{align}
\kappa_{right\ critical}^{1,w}&  \lesssim \sup_{\theta \in J_{right\ critical}} \left[\left( 1 + \frac{|\gamma''_t(\theta)|}{|\gamma'_t(\theta)|} \right) m^w_t(\theta) + (m_t^w)'(\theta) \right] \cdot \int_{J_{right\ critical}}\frac{1}{ |\gamma'_t(\theta)| }d\theta \\
&\lesssim \sup_{\theta \in J_{right\ critical}} \left[t^2 m^w_t(\theta) + (m_t^w)'(\theta) \right] \cdot\frac{\log(t )}{t^{\frac{9}{2}}}
\end{align}
From this, we see
\begin{align}
\kappa_{right\ critical}^{1,z} \lesssim \frac{\log(t )}{t^{\frac{5}{2}}} , \quad \kappa_{right\ critical}^{1,y} &\lesssim\frac{\log(t )}{t^{\frac{3}{2}}}.
\end{align}

\textit{Step 6 (outer right):} We assume $\theta \in J_{outer\ right}$. Then,
\begin{align}
\kappa^{z,\infty}_{outer \ right}&\lesssim \sup_{\theta \in J_{outer\ right}}\left[ \theta^4 \cdot \frac{1}{\theta} \cdot  (\theta - \frac{1}{t})^3 \cdot \frac{t^2}{\theta ^2(t^2\theta^2 + 1)} \cdot \frac{\theta}{t^2 (\theta - \frac{1}{t})^2} \right] \\
&\lesssim \sup_{\theta \in J_{outer\ right}}\left[\frac{\theta^2 (\theta - \frac{1}{t})}{t^2 \theta^2 + 1} \right] \lesssim \frac{1}{t^3}.
\end{align}
On the other hand,
\begin{align}
\kappa^{y,\infty}_{outer \ right} &\lesssim \sup_{\theta \in J_{outer\ right}}\left[ \theta^4 \cdot \frac{1}{\theta} \cdot  (\theta - \frac{1}{t})^3 \cdot \frac{t^2}{\theta ^2(t^2\theta^2 + 1)} \cdot \frac{1}{t^2 (\theta - \frac{1}{t})^2} \right] \\
&\lesssim \sup_{\theta \in J_{outer\ right}}\left[\frac{\theta (\theta - \frac{1}{t})}{t^2 \theta^2 + 1} \right] \lesssim \frac{1}{t^2}.
\end{align}
Next, using $\theta^p \lesssim  (\theta - \frac{1}{t})^p + \frac{1}{t^p}$ liberally,
\begin{align}
\kappa^{z,1}_{outer \ right} &= \int_{J_{outer \ right}} \theta^2 \cdot \frac{1}{\theta} \cdot \frac{\theta}{t^2 (\theta -\frac{1}{t})^2} + \theta^2 \cdot  \frac{1}{t^3 (\theta -\frac{1}{t})^3} \cdot (1 + t\theta) d\theta \\
&\lesssim \int_{\frac{1}{t} + \frac{1}{\delta t^{\frac{3}{2}}}}^1 \frac{1}{t^2} + \frac{1}{t^4 (\theta - \frac{1}{t})^2}  + \frac{1}{t^5(\theta - \frac{1}{t})^3}     d\theta \\
&\lesssim \frac{1}{t^2} + \frac{1}{t^{\frac{5}{2}}} + \frac{1}{t^2} \lesssim \frac{1}{t^2},
\end{align}
and
\begin{align}
\kappa^{y,1}_{outer \ right} &= \int_{J_{outer \ right}} \theta^2 \cdot \frac{1}{\theta} \cdot \frac{1}{t^2 (\theta -\frac{1}{t})^2} + \theta^2 \cdot  \frac{1}{t^2 (\theta -\frac{1}{t})^3}d\theta \\
&\lesssim \int_{\frac{1}{t} + \frac{1}{\delta t^{\frac{3}{2}}}}^1 \frac{1}{t^2 (\theta -\frac{1}{t})}  + \frac{1}{t^3 (\theta - \frac{1}{t})^2}  + \frac{1}{t^4(\theta - \frac{1}{t})^3}     d\theta \\
&\lesssim \frac{\ln(t)}{t^2} + \frac{1}{t^{\frac{3}{2}}} + \frac{1}{t} \lesssim \frac{1}{t}.
\end{align}
We have successfully showed that in each region, the $\kappa$'s obey the desired bound.  \qed

\vspace{5 mm}

\noindent \textbf{Acknowledgements:}  JB gratefully acknowledges support from NSF DMS-2108633.  PF gratefully acknowledges support from the Simons Laufner Mathematical Sciences Center (formerly MSRI), where part of the research was conducted during the \textit{Kinetic Theory: Novel Statistical, Stochastic and Analytical Methods} program. SI gratefully acknowledges support from NSF DMS2306528 and NSF CAREER DMS2442781. The authors would like to thank Terence Tao for the suggestion of the reference \cite{varchenko1976newton}.

\bibliographystyle{plain} 
\bibliography{bibliography} 

\begin{thebibliography}{10}

\bibitem{alazard2017stationary}
Thomas Alazard, Nicolas Burq, and Claude Zuily.
\newblock A stationary phase type estimate.
\newblock {\em Proceedings of the American Mathematical Society},
  145(7):2871--2880, 2017.

\bibitem{arnol1972integrals}
Vladimir Arnol'd.
\newblock Integrals of rapidly oscillating functions and singularities of
  projections of lagrangian manifolds.
\newblock {\em Functional analysis and its applications}, 6(3):222--224, 1972.

\bibitem{arnold2013singularities}
Vladimir Arnol'd.
\newblock {\em Singularities of caustics and wave fronts}, volume~62.
\newblock Springer Science \& Business Media, 2013.

\bibitem{bedrossian2023nonlinear}
Jacob Bedrossian, Roberta Bianchini, Michele~Coti Zelati, and Michele Dolce.
\newblock Nonlinear inviscid damping and shear-buoyancy instability in the
  two-dimensional boussinesq equations.
\newblock {\em Communications on Pure and Applied Mathematics},
  76(12):3685--3768, 2023.

\bibitem{bedrossian2019vortex}
Jacob Bedrossian, Michele Coti~Zelati, and Vlad Vicol.
\newblock Vortex axisymmetrization, inviscid damping, and vorticity depletion
  in the linearized 2d euler equations.
\newblock {\em Annals of PDE}, 5(1):4, 2019.

\bibitem{bedrossian2019stability}
Jacob Bedrossian, Pierre Germain, and Nader Masmoudi.
\newblock Stability of the couette flow at high reynolds numbers in two
  dimensions and three dimensions.
\newblock {\em Bulletin of the American Mathematical Society}, 56(3):373--414,
  2019.

\bibitem{bedrossian2024uniform}
Jacob Bedrossian, Siming He, Sameer Iyer, and Fei Wang.
\newblock Uniform inviscid damping and inviscid limit of the 2d navier-stokes
  equation with navier boundary conditions.
\newblock {\em arXiv preprint arXiv:2405.19249}, 2024.

\bibitem{BM15}
Jacob Bedrossian and Nader Masmoudi.
\newblock Inviscid damping and the asymptotic stability of planar shear flows
  in the 2d euler equations.
\newblock {\em Publications math{\'e}matiques de l'IH{\'E}S}, 122(1):195--300,
  2015.

\bibitem{bedrossian2022linearized}
Jacob Bedrossian, Nader Masmoudi, and Cl{\'e}ment Mouhot.
\newblock Linearized wave-damping structure of {Vlasov}--{Poisson} in {$\mathbb
  R^3$}.
\newblock {\em SIAM Journal on Mathematical Analysis}, 54(4):4379--4406, 2022.

\bibitem{booker1967critical}
John~R Booker and Francis~P Bretherton.
\newblock The critical layer for internal gravity waves in a shear flow.
\newblock {\em Journal of fluid mechanics}, 27(3):513--539, 1967.

\bibitem{carbery1999multidimensional}
Anthony Carbery, Michael Christ, and James Wright.
\newblock Multidimensional van der corput and sublevel set estimates.
\newblock {\em Journal of the American Mathematical Society}, 12(4):981--1015,
  1999.

\bibitem{chelton1996global}
Dudley~B Chelton and Michael~G Schlax.
\newblock Global observations of oceanic rossby waves.
\newblock {\em Science}, 272(5259):234--238, 1996.

\bibitem{deng2023long}
Yu~Deng and Nader Masmoudi.
\newblock Long-time instability of the couette flow in low gevrey spaces.
\newblock {\em Communications on Pure and Applied Mathematics},
  76(10):2804--2887, 2023.

\bibitem{elgindi2017}
Tarek Elgindi and Klaus Widmayer.
\newblock Long time stability for solutions of a $\beta$‐plane equation.
\newblock {\em Communications on Pure and Applied Mathematics},
  70(8):1425--1471, 2017.

\bibitem{fan2025inertial}
Yanlong Fan, Daozhi Han, and Quan Wang.
\newblock Inertial instability of couette flow with coriolis force.
\newblock {\em arXiv preprint arXiv:2510.01602}, 2025.

\bibitem{Flynn2025CAS}
Patrick Flynn.
\newblock Computer algebra scripts for ``linear decay of the {$\beta$}-plane
  equation near couette flow on the plane".
\newblock \url{https://github.com/patrimatics/beta_plane}, 2025.
\newblock Version 1.0, accessed October 31, 2025.

\bibitem{greenleaf2007oscillatory}
Allan Greenleaf, Malabika Pramanik, and Wan Tang.
\newblock Oscillatory integral operators with homogeneous polynomial phases in
  several variables.
\newblock {\em Journal of Functional Analysis}, 244(2):444--487, 2007.

\bibitem{greenleaf1994fourier}
Allan Greenleaf and Andreas Seeger.
\newblock Fourier integral operators with fold singularities.
\newblock 1994.

\bibitem{guo2023global}
Yan Guo, Benoit Pausader, and Klaus Widmayer.
\newblock Global axisymmetric euler flows with rotation.
\newblock {\em Inventiones mathematicae}, 231(1):169--262, 2023.

\bibitem{han2021linearized}
Daniel Han-Kwan, Toan~T Nguyen, and Fr{\'e}d{\'e}ric Rousset.
\newblock On the linearized vlasov--poisson system on the whole space around
  stable homogeneous equilibria.
\newblock {\em Communications in Mathematical Physics}, 387(3):1405--1440,
  2021.

\bibitem{ionescu2023nonlinear}
Alexandru~D Ionescu and Hao Jia.
\newblock Nonlinear inviscid damping near monotonic shear flows.
\newblock {\em Acta Mathematica}, 230(2):321--399, 2023.

\bibitem{jia2020linear}
Hao Jia.
\newblock Linear inviscid damping near monotone shear flows.
\newblock {\em SIAM Journal on Mathematical Analysis}, 52(1):623--652, 2020.

\bibitem{jia2025linearized}
Hao Jia.
\newblock Linearized navier stokes equations at high reynolds numbers: A
  unified approach to inviscid damping, vorticity depletion and enhanced
  dissipation.
\newblock {\em EMS Surveys in Mathematical Sciences}, 2025.

\bibitem{ko2024global}
Haram Ko.
\newblock Global axisymmetric solutions for navier-stokes equation with
  rotation uniformly in the inviscid limit.
\newblock {\em arXiv preprint arXiv:2409.17528}, 2024.

\bibitem{ko2025increased}
Haram Ko, Benoit Pausader, Ryo Takada, and Klaus Widmayer.
\newblock Increased lifespan for 3d compressible euler flows with rotation.
\newblock {\em arXiv preprint arXiv:2509.20505}, 2025.

\bibitem{lin2011inviscid}
Zhiwu Lin and Chongchun Zeng.
\newblock Inviscid dynamical structures near couette flow.
\newblock {\em Archive for rational mechanics and analysis}, 200(3):1075--1097,
  2011.

\bibitem{masmoudi2024nonlinear}
Nader Masmoudi and Weiren Zhao.
\newblock Nonlinear inviscid damping for a class of monotone shear flows in a
  finite channel.
\newblock {\em Annals of Mathematics}, 199(3):1093--1175, 2024.

\bibitem{oh2020uniform}
Sewook Oh and Sanghyuk Lee.
\newblock Uniform stationary phase estimate with limited smoothness.
\newblock {\em arXiv preprint arXiv:2012.12572}, 2020.

\bibitem{pedlosky2013geophysical}
Joseph Pedlosky.
\newblock {\em Geophysical fluid dynamics}.
\newblock Springer Science \& Business Media, 2013.

\bibitem{phong1997newton}
Duong~Hong Phong and Elias Stein.
\newblock The newton polyhedron and oscillatory integral operators.
\newblock {\em Acta Math}, 179:105--152, 1997.

\bibitem{pusateri2018global}
Fabio Pusateri and Klaus Widmayer.
\newblock On the global stability of a beta-plane equation.
\newblock {\em Analysis \& PDE}, 11(7):1587--1624, 2018.

\bibitem{rhines1975waves}
Peter~B Rhines.
\newblock Waves and turbulence on a beta-plane.
\newblock {\em Journal of Fluid Mechanics}, 69(3):417--443, 1975.

\bibitem{rhines1979geostrophic}
Peter~B Rhines.
\newblock Geostrophic turbulence.
\newblock {\em Annual review of fluid mechanics}, 11(1):401--441, 1979.

\bibitem{schippa2024oscillatory}
Robert Schippa.
\newblock Oscillatory integral operators with homogeneous phase functions.
\newblock {\em Journal d'Analyse Math{\'e}matique}, 154(1):1--67, 2024.

\bibitem{staffilani2002strichartz}
Gigliola Staffilani and Daniel Tataru.
\newblock Strichartz estimates for a schr{\"o}dinger operator with nonsmooth
  coefficients.
\newblock {\em Communications in Partial Differential Equations},
  27(7-8):1337--1372, 2002.

\bibitem{stein1993harmonic}
Elias~M Stein and Timothy~S Murphy.
\newblock {\em Harmonic analysis: real-variable methods, orthogonality, and
  oscillatory integrals}, volume~3.
\newblock Princeton University Press, 1993.

\bibitem{tataru1999fbi}
Daniel Tataru.
\newblock The fbi transform, operators with nonsmooth coefficients and the
  nonlinear wave equation.
\newblock {\em Journ{\'e}es {\'E}quations aux d{\'e}riv{\'e}es partielles},
  pages 1--16, 1999.

\bibitem{tataru2000strichartz}
Daniel Tataru.
\newblock Strichartz estimates for operators with nonsmooth coefficients and
  the nonlinear wave equation.
\newblock {\em American Journal of Mathematics}, 122(2):349--376, 2000.

\bibitem{vallis2017atmospheric}
Geoffrey~K Vallis.
\newblock {\em Atmospheric and oceanic fluid dynamics}.
\newblock Cambridge University Press, 2017.

\bibitem{varchenko1976newton}
Alexander~N Varchenko.
\newblock Newton polyhedra and estimation of oscillating integrals.
\newblock {\em Functional analysis and its applications}, 10(3):175--196, 1976.

\bibitem{wei2018linear}
Dongyi Wei, Zhifei Zhang, and Weiren Zhao.
\newblock Linear inviscid damping for a class of monotone shear flow in sobolev
  spaces.
\newblock {\em Communications on Pure and Applied Mathematics}, 71(4):617--687,
  2018.

\bibitem{wei2019linear}
Dongyi Wei, Zhifei Zhang, and Weiren Zhao.
\newblock Linear inviscid damping and vorticity depletion for shear flows.
\newblock {\em Annals of PDE}, 5(1):3, 2019.

\bibitem{wei2020linear}
Dongyi Wei, Zhifei Zhang, and Weiren Zhao.
\newblock Linear inviscid damping and enhanced dissipation for the kolmogorov
  flow.
\newblock {\em Advances in Mathematics}, 362:106963, 2020.

\bibitem{wei2020linearBeta}
Dongyi Wei, Zhifei Zhang, and Hao Zhu.
\newblock Linear inviscid damping for the $\beta$-plane equation.
\newblock {\em Communications in Mathematical Physics}, 375(1):127--174, 2020.

\bibitem{yamagishi2020oscillatory}
Shuntaro Yamagishi.
\newblock On an oscillatory integral involving a homogeneous form.
\newblock {\em Functiones et Approximatio Commentarii Mathematici},
  62(1):21--58, 2020.

\bibitem{yang2018linear}
Jincheng Yang and Zhiwu Lin.
\newblock Linear inviscid damping for couette flow in stratified fluid.
\newblock {\em Journal of Mathematical Fluid Mechanics}, 20(2):445--472, 2018.

\bibitem{zhao2025inviscid}
Weiren Zhao.
\newblock Inviscid damping of monotone shear flows for 2d inhomogeneous euler
  equation with non-constant density in a finite channel.
\newblock {\em Annals of PDE}, 11(1):8, 2025.

\end{thebibliography}

\appendix

\end{document}